\titleformat*{\section}{\bf\Large\center}
\newcommand{\GG}[1]{}
\theoremstyle{definition}
\newtheorem{assumption}{Assumption}
\newtheorem*{theorem*}{Theorem}
\newtheorem{theorem}{Theorem}
\newtheorem*{rmk*}{Remark}
\newtheorem{lemma}{Lemma}
\newtheorem{condition}{Condition}
\newtheorem*{corollary*}{Corollary}
\apptocmd{\sloppy}{\hbadness 10000\relax}{}{} %
\def\I{\mathbbm{1}}
\def\pr{\text{pr}}
\def\LR{\text{LR}}
\def\SLR{\text{SLR}}
\def\Var{\text{Var}}
\def\E{\mathbb{E}}
\def\TST{\mathfrak{T}}
\def\ind{\begin{picture}(9,8)
	\put(0,0){\line(1,0){9}}
	\put(3,0){\line(0,1){8}}
	\put(6,0){\line(0,1){8}}
	\end{picture}
}
\def\converged{\stackrel{d}{\longrightarrow}}
\def\convergep{\stackrel{p}{\longrightarrow}}
\def\convergeas{\stackrel{a.s.}{\longrightarrow}}
\def\converge{\stackrel{}{\longrightarrow}}
\def\ED{M}
\def\Binomial{\text{Bin}}
\def\Hypergeometric{\text{HGeom}}
\def\Bernoulli{\text{Bern}}
\def\exponential{\text{Expo}}
\newcommand\independent{\protect\mathpalette{\protect\independenT}{\perp}}
\def\independenT#1#2{\mathrel{\rlap{$#1#2$}\mkern2mu{#1#2}}}
\providecommand{\del}[1]{{\protect\color{red}\sout{#1}}}
\providecommand{\del}[1]{}
\numberwithin{equation}{section}
\theoremstyle{plain}
\def\rev{\color{black}}
\begin{document}

\onehalfspacing

\title{\bf Randomization-based Test for Censored Outcomes: A New Look at the Logrank Test
}
\author{
	Xinran Li and Dylan S. Small
\footnote{
    Xinran Li, Department of Statistics, University of Illinois, Champaign, IL 61820 (E-mail: \href{mailto:xinranli@illinois.edu}{xinranli@illinois.edu}).
Dylan S. Small, Department of Statistics, University of Pennsylvania, Philadelphia, PA 19104 (E-mail: \href{mailto:dsmall@wharton.upenn.edu}{dsmall@wharton.upenn.edu}).
}
}
\date{}
\maketitle

\begin{abstract}
Two-sample tests with censored outcomes 
are a classical topic in statistics with wide use even in cutting edge applications. 
There are 
at least 
two modes of inference used to justify two-sample tests. 
One is usual superpopulation inference assuming that units are independent and identically distributed (i.i.d.) samples from some superpopulation; 
the other is finite population inference that relies on the random assignments of units into different groups.  
When randomization is actually implemented, 
the latter has the advantage of avoiding distributional assumptions on the outcomes.
In this paper, we focus on finite population inference for censored outcomes, which has been less explored in the literature. Moreover, we allow the censoring time to depend on treatment assignment, under which exact permutation inference is unachievable. 
We find that, surprisingly, the usual logrank test can also be justified by randomization. 
Specifically, 
under a Bernoulli randomized experiment with 
non-informative i.i.d.\ censoring, 
the logrank test is asymptotically valid for testing Fisher's null hypothesis 
of no treatment effect
on any unit. 
The asymptotic validity of the logrank test does not require any distributional assumption on the potential event times.
We further extend the theory to the stratified logrank test, which is useful for randomized block designs and when censoring mechanisms vary across strata. 
In sum, 
the developed theory for the logrank test from finite population inference supplements its classical theory from usual superpopulation inference, 
and helps provide a broader justification for the logrank test. 
\end{abstract}

{\bf Keywords}: 
Logrank test; Randomization; Non-informative censoring; Potential outcome; Stratified logrank test

\section{Introduction}

\subsection{Superpopulation versus finite population inference}

The two-sample test has been one of the most classical and important topics in statistics.  
Most theoretical justification for two-sample tests relies on the assumption of independent and identically distributed (i.i.d.) sampling of units from {\rev some} population. 
However, in the context of randomized experiments, which are widely used in industry \citep[e.g.,][]{box2005statistics, dasgupta2014causal}, clinical trials \citep[e.g.,][]{rosenberger2015randomization}, and more recently in social science \citep[e.g.,][]{ATHEY201773} and technology companies 
\citep[e.g.,][]{basse2019minimax, Bojinov2019, bojinov2020design},  
it is often more natural to conduct inference based on randomization of the treatment assignments. 
This is often called randomization-based or design-based inference, as well as finite population inference \citep{Fisher:1935, Neyman:1923}. 
Specifically, 
under finite population inference, each unit's potential outcome 
\citep{Neyman:1923,Rubin:1974} 
under either treatment is viewed as a fixed constant (or equivalently being conditioned on), 
and the randomness in the observed data comes solely from the physical randomization of treatment assignments, which acts as the ``reasoned basis'' for inference \citep{Fisher:1935}.

Compared to superpopulation inference, finite population inference has the following two advantages. 
First, finite population inference focuses explicitly on the experimental units by conditioning on their potential outcomes, 
which 
in some sense makes our inference more relevant for the units in hand. 
On the contrary, 
the superpopulation inference assumes that the units are i.i.d.\ samples from an often hypothetical superpopulation, and focuses on inference about the superpopulation distribution. 
Second, 
the validity of superpopulation inference generally relies crucially on the i.i.d.\ sampling of units. 
However, the validity of finite population inference relies on the randomization of treatment assignments, which can be guaranteed by design, 
and does not require the units' potential outcomes to be either independent or identically distributed. 
Therefore, 
finite population inference 
can avoid distributional assumptions on the units
and thus provides inference procedures with broader justifications, 
at least in the context of randomized experiments. 
For a more detailed comparison between these two types of inference, see, e.g.,  \citet{Little2004} with an emphasis on survey sampling, 
\citet[][Chapter 2.4.5]{rosenbaum2002observational} with an emphasis on causal inference 
and \citet{Imbens2020} with an emphasis on regression analysis. 
 
\subsection{Two-sample tests for censored outcomes}

In many clinical trials, the primary outcome is time to a certain event. 
The outcome may be censored, %
rendering the usual two-sample tests inapplicable. 
In the presence of censoring, 
\citet{Gehan1965} proposed a permutation test using a generalized Wilcoxon statistic, and \citet{mantel1966} proposed the logrank test by combining the Mantel--Haenszel statistics %
for all contingency tables during the study period. 
\cite{Prentice1978} extended the two tests to more general linear rank tests. 
\cite{Aalen1978} and \citet{Gill1980} 
provided a full and rigorous study of the asymptotic properties of these %
tests. 
Most theoretical justifications for these two-sample tests have assumed that the units 
are i.i.d.\  
samples from some superpopulation \citep{Harrington1991}. 
However, this i.i.d.\ assumption can be violated 
when the units' outcomes are inhomogeneous or dependent on each other due to some (hidden)
risk factors. 
For example, in a multicenter clinical trial, 
each patient's outcome may depend on his/her own characteristics such as age and sex, 
and the outcomes for patients within the same center may be correlated due to the same environment or other center-specific factors \citep{Cai1999}.

There have been fewer studies of two-sample tests for censored outcomes under finite population inference. 
To test Fisher's null hypothesis of no treatment effect for any unit, 
\citet{rosenbaum2002observational} proposed randomization tests for censored outcomes using a partial ordering, 
and 
\citet{zhang2005} established asymptotic Normality of 
the randomization distribution 
of the logrank statistic. 
Both approaches require the assumption
of identical
potential censoring times under 
treatment and control. 
Under this identical censoring assumption, Fisher's null hypothesis is sharp, 
the null distributions of test statistics (e.g., logrank statistic) are known exactly, and the asymptotics provides
mainly a 
numerical approximation. 
However, as \citet{zhang2005} commented, 
the assumption of identical censoring can be violated due to toxicity or a side effect of either treatment. 
When the censoring times can be different
under the two treatment arms, 
Fisher's null hypothesis of no effect becomes composite.
In this case, understanding the randomization distribution of the logrank statistic becomes more challenging, and in the meanwhile more critical for inference, 
because the exact randomization distribution is no longer known.

\subsection{Our contribution}

\citet{Neyman:1923} first studied the randomization distribution of 
the difference-in-means statistic 
under finite population inference, and provided an asymptotic valid test for a non-sharp null hypothesis, for which no exact permutation tests are available and large-sample approximations become essential. 
In particular, 
he 
showed that the classical $t$-test is still asymptotically valid under randomization, although in a generally conservative way.
In the presence of censoring where the censoring can depend on the treatment, 
Fisher's null is composite,
and 
there has not been any 
test
available that can be justified by randomization. 
As \citet{zhang2005} commented, it does not appear that randomization-based techniques can be used to perform the analysis.
In this paper, we will show that it is still possible to conduct randomization-based tests in large samples, and our approach is based on the logrank test, one of the most popular two-sample tests for censored outcomes. 

From \citet{Neyman:1923}, 
test statistics that are justified under i.i.d.\  sampling from a superpopulation can sometimes also be justified under randomization of a finite population \citep{ding2017}.  %
However, this is not always true
\citep[see, e.g.,][]{freedman2008, freedman2008regression_a, freedman2008regression_b}. 
Our question is, can the logrank test 
for studying censored time-to-event outcomes 
be justified under randomization of a finite population?

We will 
show that as long as 
the censoring is non-informative and 
the potential censoring times 
are i.i.d.\ across all units, 
the logrank test 
can 
be justified by randomization, without requiring any distributional assumption on the potential event times.  
Specifically, under a Bernoulli randomized experiment and 
non-informative 
i.i.d.\ censoring, 
if the treatment has no effect on any unit's event time, then the 
randomization distribution of the 
logrank statistic, i.e., the distribution conditional on all the potential event times, is asymptotically standard Gaussian. 
Our proof makes use of a novel martingale construction, the martingale central limit theorem, and Gaussian approximation of hypergeometric distributions.
Our result shows that 
in practice we can be more confident in using the logrank test, when the units are randomly assigned to the two treatment arms and the censoring is i.i.d. across units 
under each treatment arm. 
The result is particularly useful if the experimenter is able to control both the treatment assignment and the censoring mechanisms, 
because the logrank test can then be justified by physical implementation.

Furthermore, we extend the theory to the stratified logrank test. 
Note that under superpopulation inference, 
the stratified logrank test is often preferred when we want to adjust covariates that 
may affect the event times. 
Because our finite-population justification of the logrank test allow arbitrary distribution of the event times, 
the original motivation for stratification seems unnecessary, at least in terms of the validity of the logrank test. 
However, as demonstrated later, such an extension is still useful, especially in randomized block designs and when the censoring mechanism varies across strata. 

The paper proceeds as follows. 
Section \ref{sec:framework} introduces the potential outcome framework, 
and briefly reviews the superpopulation and finite population inference. 
Section \ref{sec:review_logrank} reviews usual superpopulation inference for the logrank test, and 
Section \ref{sec:simu_logrank} 
compares it to finite population inference by simulation. 
Section \ref{sec:rand_dist_logrank} studies the exact randomization distribution of the logrank statistic, 
and Section \ref{sec:asym_dist} studies its large-sample approximation. 
Section \ref{sec:violation} 
conducts simulations to investigate violation of assumptions, and motivates the stratified logrank test.  
Section \ref{sec:strata} 
studies 
the stratified logrank test. 
Section \ref{sec:conclusion} concludes with a short discussion. 

\section{Framework, notation and assumptions}\label{sec:framework}

\subsection{Potential event/censoring time and treatment assignment}\label{sec:potential_outcome_treatment_assignment}

We consider an experiment on $n$ units with two treatment arms: an active treatment and a control. 
The outcome of interest is the time to a pre-specified event, e.g., occurrence of a certain disease or death. 
Under the potential outcome framework, 
for each unit $i$, 
let $T_i(1)$ and $T_i(0)$ denote the potential event times under treatment and control, respectively. 
Let $\bm{T}(1) = (T_1(1), \ldots, T_n(1))'$ and 
$\bm{T}(0) = (T_1(0), \ldots, T_n(0))'$ 
be the vectors of potential event times for all $n$ units under treatment and control, respectively.  
The treatment effects are then characterized by the comparison between potential outcomes $\bm{T}(1)$ and $\bm{T}(0)$. %

Under either treatment arm, we may not be able to observe the event time due to censoring, which itself may be related to the treatment. 
We introduce $C_i(1)$ and $C_i(0)$ to denote the potential censoring times for unit $i$ under treatment and control, respectively. 
Let $\bm{C}(1) = (C_1(1), \ldots, C_n(1))'$ and 
$\bm{C}(0) = (C_1(0), \ldots, C_n(0))'$ be the vectors of all units' potential censoring times under treatment and control, respectively. 
Both the $T_i(z)$'s and $C_i(z)$'s are nonnegative for $z=0,1$. 
If $C_i(z) = \infty$, then there is no censoring for unit $i$ under treatment arm $z$. 
We say a unit is at risk if its event has not happened and it has not been censored. For $z=0,1,$ $W_i(z) = \min\{T_i(z), C_i(z)\} $ is then the potential time at risk for unit $i$ under treatment arm $z$, and $\Delta_i(z) = \I\{T_{i}(z) \leq C_{i}(z)\}$ is the corresponding potential event indicator.  

Let $Z_i \in \{0,1\}$ be the treatment assignment for unit $i$, which equals 1 if unit $i$ receives the active treatment and 0 otherwise, 
and $\bm{Z} = (Z_1, \ldots, Z_n)'$ be the treatment assignment vector for all units. 
Then $T_i = Z_i \cdot T_i(1) + (1-Z_i)\cdot T_i(0)$ and $C_i = Z_i \cdot C_i(1) + (1-Z_i) \cdot C_i(0)$ are the realized event and censoring times for unit $i$.  
Note 
that these realized variables, $T_i$ and $C_i$, may not be observed in practice. %
Instead, 
for each unit $i$, 
we can observe only the minimum of the realized event and censoring times, or equivalently the time at risk, 
i.e., $W_i = \min \{T_i, C_i\}= Z_iW_i(1) + (1-Z_i) W_i(0)$, 
and the corresponding event indicator for whether the realized event happens before censoring,  i.e., 
$
\Delta_i  = \I(T_{i} \leq C_{i}) =  Z_i \Delta_i(1) + (1-Z_i)\Delta_i(0).
$

To facilitate the discussion and ease understanding, 
we list all the notation with a short explanation in Table \ref{tab:notation}, 
and introduce the Studies of Left Ventricular  Dysfunction \citep[SOLVD,][]{SOLVD1990} as an illustrative example. 
The SOLVD is an extensive research program that contains a double-blinded randomized trial conducted in 23 medical centers. 
The trial entrolled 2569 eligible patients, and randomly assigned about half of them to receive enalapril 
or placebo. 
The patients were followed up for 41.4 months on average. 
Suppose that the event of interest 
is death; see \citet{SOLVD1991} and \citet{Cai1999} for other events of interest. 
Then in our notation, $Z$ denotes the treatment indicator for whether the patient receives enalapril or placebo, 
$T(1)$ and $T(0)$ denote the potential survival times under treatment and control, 
and $C(1)$ and $C(0)$ denote the potential follow-up times under treatment and control.

\begin{table}[htb]
    \centering
    \caption{Notation and explanation.}
    \label{tab:notation}
    {\rev \resizebox{\columnwidth}{!}{%
    \begin{tabular}{ll}
        \toprule
        Notation & Explanation \\
        \midrule
        $Z$ & treatment assignment indicator 
        \\
        $T(z)$ & potential event time under treatment arm $z$ 
        \\
        $C(z)$ & potential censoring time under treatment arm $z$ 
        \\
        $W(z) = \min\{Y(z), C(z)\}$ & potential time at risk under treatment arm $z$
        \\
        $\Delta(z) = \I\{T_{i}(z) \leq C_{i}(z)\}$ & potential event indicator under treatment arm $z$ 
        \\
        $T, C, W, \Delta$ & realized event time, censoring time, time at risk and event indicator
        \\
        $t_1 < t_2 < \ldots < t_K$ & distinct values of the control potential event times $\{T_i(0): 1\le i \le n\}$
        \\
        $d_k = \sum_{i=1}^{n} \I (T_i(0) = t_k)$ & number of units potentially having events at time $t_k$ under control
        \\
        $n_k = \sum_{i=1}^{n} \I (T_i(0) \ge t_k)$ & number of units potentially having events no earlier than time $t_k$ under control
        \\
        $h_k = d_k/n_k$ & hazard at time $t_k$ for all units under control
        \\
        $F(\cdot), \lambda(\cdot), \Lambda(\cdot)$ & survival, hazard and integrated hazard functions for empirical distribution of $T_i(0)$
        \\
        \bottomrule
    \end{tabular}%
    }}
\end{table}

\subsection{Treatment assignment and censoring mechanisms}\label{sec:two_mechanism}
We study finite population inference 
in the presence of censoring, 
where 
both $T_i(1)$'s and $T_i(0)$'s are viewed as fixed constants.
This is equivalent to conducting inference conditioning on all the potential event times $\bm{T}(1)$ and $\bm{T}(0)$. 
Consequently, the inference relies crucially on the randomness of the treatment assignments as well as potential censoring times. 
In the following, we will write the conditioning on $(\bm{T}(1), \bm{T}(0))$ explicitly to emphasize that all the potential event times are viewed as fixed constants.  
On the contrary, 
in usual superpopulation inference, the units' potential event times $(T_i(1), T_i(0))$'s are assumed to be i.i.d.\ samples from some distribution, 
the treatment assignments are conditioned on and thus viewed as constants, 
and the inference relies crucially on the randomness of the potential event times. 
To ease understanding, 
we review and compare 
the superpopulation and finite population inferences 
for the classical two-sample $t$-test when there is no censoring; see Section \ref{sec:review_t_test}.

Under finite population inference, 
because 
the potential event times $\bm{T}(1)$ and $\bm{T}(0)$ have been conditioned on or equivalently viewed as constants, 
the randomness in the observed data $W_i$'s and $\Delta_i$'s comes solely from the random treatment assignments $Z_i$'s and the random censoring times $C_i(1)$'s and $C_i(0)$'s. %
Therefore, 
the distributions of the treatment assignments and the potential censoring times govern the data-generating process and are crucial for statistical inference. 
First, we consider the distribution of $\bm{Z}$, also called the treatment assignment mechanism.  
We focus on the Bernoulli randomized experiment 
with equal probability of receiving active treatment for all units. 
Let $\Bernoulli(p)$ denote a Bernoulli distribution that has probability $p$ at 1 and $1-p$ at 0, for $p\in (0,1)$.
\begin{assumption}\label{asmp:rbe}
	Conditional on all the potential event and censoring times, 
	the treatment assignments are i.i.d.\ across all units, i.e.,   
	$$
	Z_i \mid \bm{T}(1), \bm{T}(0), \bm{C}(1), \bm{C}(0) \ \overset{\text{i.i.d.}}{\sim} \  \Bernoulli(p_1), \quad (1\le i \le n)
	$$
	for some $p_1 \equiv 1- p_0 \in (0,1).$
\end{assumption}

Second, we consider the distribution of $(\bm{C}(1), \bm{C}(0))$, also called the censoring mechanism. 
We focus on the case of non-informative i.i.d.\ censoring. 
\begin{assumption}\label{asmp:noninformative}
	The potential censoring times 
	for all units 
    are independent of the potential event times for all units, 
	i.e., 
	$(\bm{C}(1), \bm{C}(0)) \ind (\bm{T}(1), \bm{T}(0)),$
	and 
	the %
	$n$ two-dimensional random vectors, 
	$(C_1(1), C_1(0))', (C_2(1), C_2(0))',$ $\ldots,$ $(C_n(1), C_n(0))'$,  
	are i.i.d.
\end{assumption}

Note 
that Assumption \ref{asmp:noninformative} does not require $C_i(1)$ and $C_i(0)$ for the same unit $i$ to be independent or identically distributed. Instead, $C_i(1)$ and $C_i(0)$ can have  different marginal distributions and arbitrary dependence. 
Under Assumption \ref{asmp:noninformative}, 
let $G_z(c) \equiv \pr(C_i(z)\ge c)$ 
for $z=0,1$. 
In the context of the SOLVD, Assumption \ref{asmp:noninformative} assumes that the treatment and control potential follow-up times are i.i.d.\ across all units and importantly that they are independent of the potential survival times. 
It can be violated when the censoring is informative about the survival time. 
For example, in the SOLVD, the blinded medication may be terminated when the patient remains symptomatic despite maximum therapy with such medications \citep{SOLVD1991}, under which the censoring might indicate a shorter survival time; see also \citet{leung1997censoring} for more examples and discussions. 

In our finite population inference, Assumptions \ref{asmp:rbe} and \ref{asmp:noninformative} are crucial and in some sense necessary for justifying the logrank test or identifying the treatment effects. 
Note that we relax the i.i.d.\ assumption on the potential event times and allow them to be arbitrary constants. 
If, for instance, Assumption \ref{asmp:noninformative} fails and the potential censoring times are not identically distributed for all units, 
then the censoring times can be related to the event times in which case identification of the treatment effect is lost without further assumptions \citep{Tsiatis1975}.  
We conduct a simulation study in Section \ref{sec:violation} to investigate the violation of Assumptions \ref{asmp:rbe} and  \ref{asmp:noninformative}, showing their necessity for the validity of the logrank test under finite population inference. 

Finally, we give two technical remarks regarding the assumptions. 
First, we can weaken Assumptions \ref{asmp:rbe} and \ref{asmp:noninformative}
by allowing dependence between the treatment assignment and the potential censoring times, but we still require 
$(Z_i, C_i(1),C_i(0))$'s to be i.i.d.\ and independent of the potential event times; see 
Appendix A1 of the the Supplementary Material
for details. 
Second, similar assumptions are also invoked in usual superpopulation inference, but they can have weaker forms; 
see 
Appendix A2 of the Supplementary Material
for a more detailed comparison.

\subsection{Fisher's null hypothesis of no treatment effect}\label{sec:null}

We are interested in testing whether the treatment has any effect on 
the units' event times. 
We consider Fisher's null hypothesis \citep{Fisher:1935}
of no 
treatment 
effect on any unit:
\begin{align}\label{eq:H0}
H_0: \bm{T}(1) = \bm{T}(0). %
\end{align}
In the absence of censoring, Fisher's null $H_0$ 
is sharp in the sense that all the potential event times are known from the observed data. However, this is generally not true in the presence of censoring. 
For example, for a unit $i$ under control with $Z_i=0$, $\Delta_i=0$ and $W_i \ge 0$, under $H_0$, we know that $T_i(1) = T_i(0) > C_i(0) = W_i$, 
but we do not know the exact values of $T_i(1)$, $C_i(1)$, $W_i(1)$ and $\Delta_i(1)$ under the alternative treatment.
The non-sharpness of $H_0$ implies that we cannot know the null distribution of the logrank statistic exactly, and 
the usual randomization or permutation tests %
may not be valid.
To conduct valid 
inference, 
it is crucial to understand the null distribution of the logrank statistic under $H_0$, which generally depends on 
the treatment assignment and censoring mechanisms, as well as all the potential event times. 
Here the null distribution of the logrank statistic refers to its conditional distribution given $\bm{T}(1)$ and $\bm{T}(0)$ under Fisher's null $H_0$.

\subsection{Superpopulation and finite population inferences}\label{sec:review_t_test}

We 
give a brief comparison between 
superpopulation and finite population inferences using the classical two-sample $t$-test as an illustration. 
Such a comparison may help the reader
understand the difference between these two types of inference, 
and make the purpose of the paper less obscure. 
We refer the reader to \citet{Imbens2020} 
for a more comprehensive comparison.

Suppose that there is no censoring and the realized event times $T_i$'s can always be observed. 
Let $\TST$ denote the two-sample $t$-statistic calculated from $(Z_i, T_i)$'s. 
Assume that the treatment assignments are completely randomized and Fisher's null $H_0$ in \eqref{eq:H0} holds. Below we discuss two types of justification for the asymptotic Gaussianity of $\TST$. 
First, under the superpopulation inference, the potential event times $T_i(1) = T_i(0)$'s are assumed to be i.i.d.\ samples from {\rev some} distribution. 
Under certain regularity conditions and conditional on the treatment assignments, 
the $t$-statistic is asymptotically standard Gaussian, in the sense that 
$\TST \mid \bm{Z} \converged \mathcal{N}(0,1).$
Second, under the finite population inference, all the potential event times are viewed as fixed constants or equivalently being conditioned on. 
Under certain regularity conditions and conditional on the potential event times, 
the $t$-statistic is asymptotically standard Gaussian \citep{Neyman:1923, hajek1960limiting, lidingclt2016}, in the sense that 
$\TST \mid \bm{T}(1), \bm{T}(0) \converged \mathcal{N}(0,1).$

Although leading to similar conclusions 
justifying the two-sample $t$-test, 
the superpopulation and finite population inferences are quite different. 
The main difference
comes from the source of randomness. 
The distribution of $\TST$ under the superpopulation setting is conditioning on the treatment assignment $\bm{Z}$, and the randomness comes solely from the random potential event times $\bm{T}(1)$ and $\bm{T}(0)$. 
However, the distribution of $\TST$ under finite population setting is conditioning on the potential event times $\bm{T}(1)$ and $\bm{T}(0)$, and the randomness comes solely from the random treatment assignment $\bm{Z}$. 
Moreover, 
for the asymptotics, 
these two types of inference impose different forms of regularity conditions. 
The superpopulation asymptotics 
imposes regularity conditions on the population distribution that generates the i.i.d.\ samples, 
while the finite population asymptotics imposes regularity conditions directly on the fixed potential event times. 
The regularity conditions for finite population inference does not, at least explicitly,  require the potential event times $T_i(0)$'s to be either identically distributed or independent, 
and can be weaker than that for superpopulation inference \citep{lidingclt2016}. 
Therefore, 
intuitively, 
finite population inference can give a broader justification for the two-sample $t$-test in randomized experiments. 
We also conduct a simulation study to confirm this intuition; 
see Appendix A3 of the Supplementary Material.

\section{Two-sample logrank test %
	under usual superpopulation inference}\label{sec:review_logrank}

As discussed 
in Section \ref{sec:review_t_test},  
randomization justifies the usual two-sample $t$-test, 
without %
any distributional assumption on the potential event times. 
However, 
the methods that are valid under the superpopulation inference cannot always be justified by  randomization under finite population inference. 
For example, 
\citet{freedman2008,freedman2008regression_a, freedman2008regression_b} showed that 
randomization may not justify linear or logistic regression.  
In this paper, we investigate whether randomization can justify the logrank test without requiring any distributional assumption on the potential event times. 
We first 
briefly review usual superpopulation inference for the logrank test.

For any %
time $t\ge 0$, 
let 
\begin{align*}
	&\overline{N}_1(t) = \sum_{i=1}^{n} Z_i \I (W_i\ge t),  \quad 
	\overline{N}_{0}(t) = \sum_{i=1}^{n} (1-Z_i) \I (W_i\ge t),\\
	& \overline{N}(t) = \overline{N}_1(t) + \overline{N}_{0}(t) = \sum_{i=1}^{n} \I (W_i\ge t)
\end{align*}
be the numbers of units at risk at time $t$ in treated, control and both groups, respectively. 
Let 
\begin{align*}
	& \overline{D}_1(t) = \sum_{i=1}^{n} Z_i \Delta_i \I (W_i= t), \quad 
	\overline{D}_0(t) = \sum_{i=1}^{n} (1-Z_i) \Delta_i \I (W_i= t),\\
	& \overline{D}(t) = \overline{D}_1(t) + \overline{D}_0(t) = \sum_{i=1}^{n} \Delta_i \I (W_i= t)
\end{align*}
be the numbers of units having events at time $t$ in treated, control and both groups, respectively. 
In the context of the SOLVD, $\overline{N}_1(t)$, $\overline{N}_0(t)$ and $\overline{N}(t)$ denote the numbers of units that are still alive and under follow-up at time $t$ in treated, control and both groups, 
and $\overline{D}_1(t)$, $\overline{D}_0(t)$ and $\overline{D}(t)$ denote the numbers of units that die at time $t$ in treated, control and both groups.
These quantities constitute the contingency table at time $t$, as shown in Table \ref{tab:contingency}. 

\begin{table}
    \centering
	\caption{The contingency table at time $t$} \label{tab:contingency}
	\resizebox{\columnwidth}{!}{
		\begin{tabular}{lccc}
			\toprule
			Group & Having event & Not having event & Total\\
			\midrule
			Treatment & $\overline{D}_1(t) = \sum_{i=1}^{n} Z_i \Delta_i \I(W_i = t)$ & $\overline{N}_1(t) - \overline{D}_1(t)$ & $\overline{N}_1(t) = \sum_{i=1}^{n} Z_i  \I(W_i \ge t)$\\
			Control & $\overline{D}_0(t) = \sum_{i=1}^{n} (1-Z_i) \Delta_i \I(W_i = t)$ & $\overline{N}_0(t) - \overline{D}_0(t)$ & $\overline{N}_0(t) = \sum_{i=1}^{n} (1-Z_i)  \I (W_i \ge t)$\\
			\midrule
			Total & $D(t) = \sum_{i=1}^{n} \Delta_i \I (W_i = t)$ & $N(t) - D(t)$ & $N(t) = \sum_{i=1}^{n} \I (W_i \ge t)$\\
			\bottomrule
		\end{tabular}
	}
\end{table}

Under Fisher's null $H_0$ in \eqref{eq:H0} and usual superpopulation formulation, 
the units' potential event times $T_i(1) = T_i(0)$'s are i.i.d.\ samples from a superpopulation. 
Suppose the treatment assignment $\bm{Z}$, the potential even times $(\bm{T}(1), \bm{T}(0))$, and the potential censoring times $(\bm{C}(1), \bm{C}(0))$ are mutually independent. 
Then conditional on the treatment assignment $\bm{Z}$, 
the realized event times $(T_i, C_i)$'s follow the usual random censorship model \citep{Harrington1991}. 
For any $t\ge 0$,  \citet{mantel1966} observed that, 
conditional on the margins of Table \ref{tab:contingency}, 
the number of observed events in treated group %
follows a hypergeometric distribution:
\begin{align}\label{eq:hypergeo_super}
\overline{D}_1(t) \mid \bm{Z}, \overline{N}_1(t), \overline{N}_0(t), \overline{D}(t)
\ \sim \ 
\Hypergeometric(\overline{N}(t), \overline{D}(t), \overline{N}_1(t) ),
\end{align}
with mean and variance  
\begin{align}\label{eq:mean_var_conti}
\overline{\ED}(t) = 
\frac{\overline{D}(t)\overline{N}_1(t)}{\overline{N}(t)}  ,
\quad
\overline{V}(t) = 
\frac{
	\overline{D}(t)  \{\overline{N}(t)-\overline{D}(t)\}  \overline{N}_1(t)  \overline{N}_0(t)
}{
	\{\overline{N}(t)\}^2 \{\overline{N}(t)-1\}
}, 
\end{align}
{\rev where we use $\Hypergeometric(m, k, b)$ to denote the hypergeometric distribution that describes the probability of $x$ successes in $b$ draws, without replacement, from a finite population of size $m$ with exactly $k$ successes,  
for any integers $m, k$ and $b$ with $0\le k,b\le m$.} 
The denominators in \eqref{eq:mean_var_conti} can be zero when $\overline{N}(t)$ equals 0 or 1, making $\overline{\ED}(t)$ or $\overline{V}(t)$ undefined. For descriptive convenience, we define $0/0$ as 0. This is reasonable because when the denominator of $\overline{V}(t)$ is zero, the numerator must be zero and $\overline{D}_1(t)$ is a constant equal to $\overline{M}(t)$ with zero variance. 
Below we give some intuition for the hypergeometric distribution in \eqref{eq:hypergeo_super}. 
For any unit at risk at time $t$, 
no matter whether it received treatment or control (i.e., in Row 1 or 2 of Table \ref{tab:contingency}), 
it has the same probability to have or not to have the event (i.e., in Column 1 or 2 of Table \ref{tab:contingency}). 
This is because the probability of immediately having an event (i.e., hazard) for any unit at risk at time $t$ under either treatment arm is the same, a fact implied by 
the i.i.d.\ assumptions on the potential event times and their independence of 
the treatment assignments and potential censoring times. 
Similar to Fisher's exact test, %
conditioning on all the margins of Table \ref{tab:contingency}, 
the number of treated units having events at time $t$ follows a hypergeometric distribution as in \eqref{eq:hypergeo_super}. 

The logrank statistic is then defined as the summation of the differences between the observed and expected event counts in treated group over all time \citep{mantel1966}, i.e., 
$\sum_t\{ \overline{D}_1(t) - \overline{\ED}(t) \}$. 
To conduct the logrank test, we will first estimate its standard deviation and then compare the logrank statistic standardized by its estimated standard deviation to standard Gaussian quantiles. 
Here we consider the commonly used variance estimator $\sum_{t} \overline{V}(t)$; see, e.g., \citet{Brown1984} for other choice of variance estimator. 
The standardized logrank statistic is then 
\begin{align}\label{eq:logrankstat}
\LR  =\left\{\sum_t \overline{V}(t) \right\}^{-1/2} \sum_t \left\{ \overline{D}_1(t) - \overline{\ED}(t) \right\}. 
\end{align}
For convenience, in the following we will simply call the standardized logrank statistic in \eqref{eq:logrankstat} as the logrank statistic.
Because  $\overline{D}_1(t) - \overline{\ED}(t)$ and $\overline{V}(t) $ are nonzero only if $\overline{D}(t)>0$, 
the summations 
in \eqref{eq:logrankstat} only need to be taken over the observed event times. 
When the potential event times $T_i(1)=T_i(0)$'s are i.i.d.\ from a superpopulation, 
from the classical theory for the logrank test, 
$\LR$ in  \eqref{eq:logrankstat} is asymptotically standard Gaussian under some regularity conditions:  
\begin{align}\label{eq:logrank_clt_super}
	\LR  \mid \bm{Z} \converged \mathcal{N}(0,1);
\end{align} 
see 
Appendix A2 of the Supplementary Material 
for more detailed discussion, and \citet{Aalen1978, Gill1980} and \citet{Harrington1991} for rigorous justification using  
the counting process formulation and martingale properties. %
The distribution \eqref{eq:logrank_clt_super}  of $\LR$  
is 
conditioning  
on the treatment assignments, and relies crucially on the random potential event times. 
In simple words, the treatment assignments are fixed, and the potential event times are random.

\section{A simulation study of the logrank test 
under both superpopulation and finite population inferences}\label{sec:simu_logrank}

We conduct a simulation study for the logrank test from both the superpopulation and finite population perspectives, with possibly non-identically distributed and dependent potential event times. 
Specifically, we will show that, with non-identically distributed or correlated potential event times $T_i(1) = T_i(0)$'s across units, the conditional distribution of the logrank statistic given treatment assignments from the superpopulation perspective can be quite different from the standard Gaussian distribution, while the conditional distribution given the potential event times from the finite population perspective can be close to the standard Gaussian distribution. 
This implies that the finite population inference can give a broader justification for the classical logrank test. 

\begin{table}[htb]
    \centering
	\caption{Parameter values for different super population settings}\label{tab:simulation}
	\begin{tabular}{ccccc}
		\toprule
		Case & $\rho$ & $\theta$  & Dependence & Heterogeneity\\
		\midrule
		1 & $0$ & $0$ & No & No\\
		2 & $0.5$ & $0$ & Yes & No\\
		3 & $0$ & $1$ & No & Yes\\
		4 & $0.5$ & $1$ & Yes & Yes\\
		\bottomrule
	\end{tabular}
\end{table}

For any $\rho\in (-1,1)$, let $\bm{\Sigma}_{\rho} \in \mathbb{R}^{n\times n}$ be a correlation matrix whose $(i,j)$th
element is $\rho^{|i-j|}$. 
A random vector $(\eta_1, \eta_2, \ldots, \eta_n)' \in \mathbb{R}^n$ is said to follow a Gaussian copula %
with correlation matrix $\bm{\Sigma}_{\rho}$ if $(\Phi^{-1}(\eta_1), \ldots,$ $\Phi^{-1}(\eta_n))' \sim \mathcal{N}(0, \bm{\Sigma}_{\rho})$, 
where $\Phi^{-1}(\cdot)$ denotes the quantile function of the standard Gaussian distribution. 
We can verify 
that the marginal distributions of the $\eta_i$'s are all uniform on $(0,1)$, 
and thus the $-\log(1-\eta_i)$'s all follow the exponential distribution with rate parameter 1, denoted by $\exponential(1)$. 
For $1\le i \le n$, we introduce a binary $X_i$ to denote some pretreatment covariate of unit $i$. 
We  consider the following model for the potential event times: 
\begin{align}\label{eq:model_potential_outcome}
& T_i(1) = T_i(0) =  - \log(1-\eta_i) \cdot \left(1 + \theta X_i \right), \quad (1\le i \le n)
\\
& (\eta_1, \eta_2, \ldots, \eta_n) \sim \text{Gaussian copula with correlation matrix } \bm{\Sigma}_{\rho},
\nonumber
\\
& \bm{X} \equiv (X_1, \ldots, X_n) = (\underbrace{0, 0, \ldots, 0}_{0.2\times n}, 
\underbrace{1, 1, \ldots, 1}_{0.3\times n},
\underbrace{0, 0, \ldots, 0}_{0.3\times n}, 
\underbrace{1, 1, \ldots, 1}_{0.2\times n}
).
\nonumber
\end{align}
In model \eqref{eq:model_potential_outcome},
the potential event times marginally follow scaled exponential distributions, with the scales depending on the parameter $\theta$ and the
covariates $X_i$'s,
and their dependence structure is determined by the parameter $\rho$. 
The 
covariates $X_i$'s 
in \eqref{eq:model_potential_outcome}
are fixed constants, or equivalently we can view \eqref{eq:model_potential_outcome} as a model conditional on the $X_i$'s. 
We consider four choices of the parameters $(\theta, \rho)$ as in Table \ref{tab:simulation}, which determine the existence of heterogeneity and dependence among the units' potential event times. 
For the potential censoring times, we generate them as i.i.d.\ samples from the following model: 
\begin{align}\label{eq:model_censor}
C_i(1) \sim 2 \cdot \exponential(1), \ 
C_i(0) \sim \exponential(1), \ 
C_i(1) \ind C_i(0), \quad 
(1\le i \le n)
\end{align}
where the censoring time tends to be larger under treatment than under control. 
For the treatment assignments, we consider the following generating model: 
\begin{align}\label{eq:model_treatment}
Z_1, Z_2, \ldots, Z_n \  \overset{\text{i.i.d.}}{\sim} \  \Bernoulli(0.5). 
\end{align}
Furthermore, 
the potential event times $\bm{T}(1) = \bm{T}(0)$ in \eqref{eq:model_potential_outcome}, 
the {\rev potential} censoring times $(\bm{C}(1), \bm{C}(0))$ in \eqref{eq:model_censor}, 
and the treatment assignment $\bm{Z}$ in  \eqref{eq:model_treatment} are mutually independent.  
Throughout %
the paper,
under each simulation scenario, 
the sample size $n$ equals $1000$, and the number of simulated datasets equals $10^4$. 

\begin{figure}[htbp]
	\centering
	\begin{subfigure}{.4\textwidth}
		\centering
		\includegraphics[width=1\linewidth]{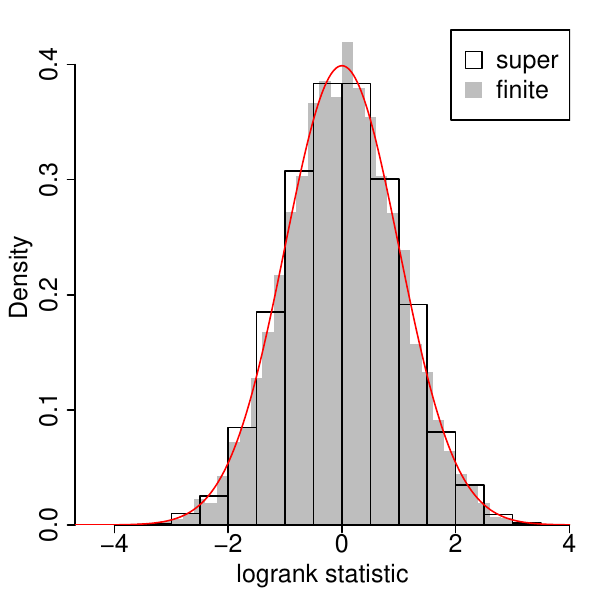}
		\caption*{case 1}
	\end{subfigure}%
	\begin{subfigure}{.4\textwidth}
		\centering
		\includegraphics[width=1\linewidth]{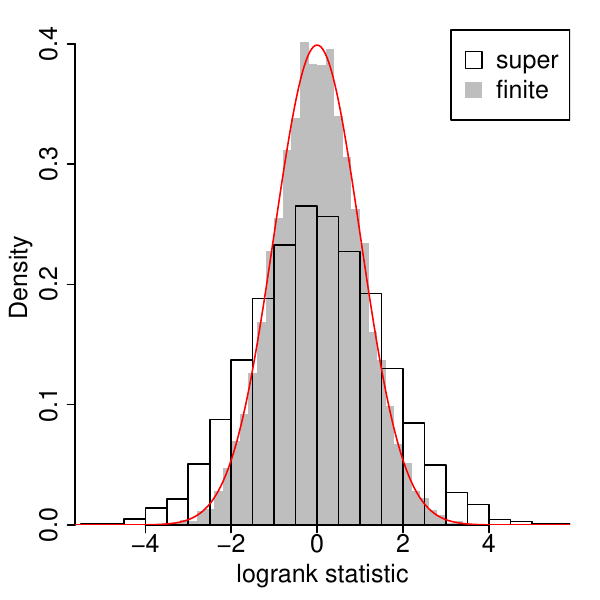}
		\caption*{case 2}
	\end{subfigure}
	\begin{subfigure}{.4\textwidth}
		\centering
		\includegraphics[width=1\linewidth]{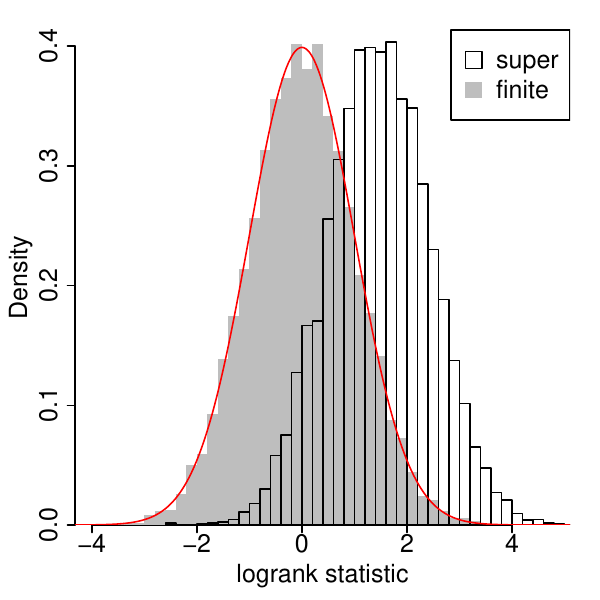}
		\caption*{case 3}
	\end{subfigure}%
	\begin{subfigure}{.4\textwidth}
		\centering
		\includegraphics[width=1\linewidth]{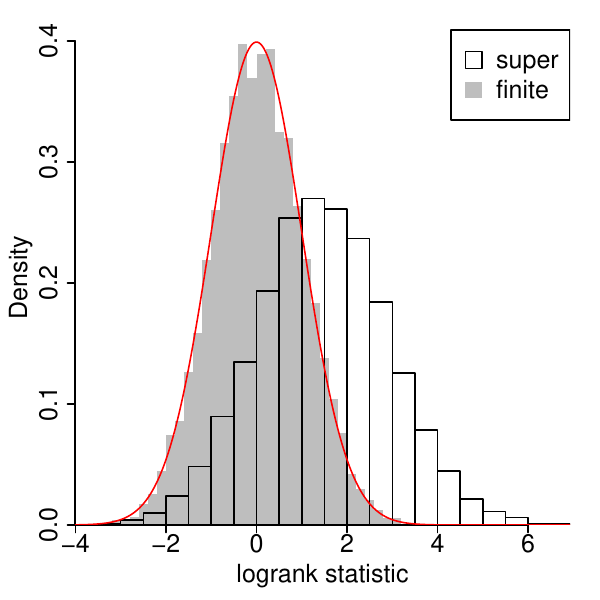}
		\caption*{case 4}
	\end{subfigure}
	\caption{
		Histograms of the logrank statistic in the superpopulation and finite population settings with parameter values in the four cases of 
		Table \ref{tab:simulation}. 
		The white histograms denote the distributions of the logrank statistic under the superpopulation setting with random potential event times and fixed treatment assignments, 
		and the gray histograms denote the distributions of the logrank statistic under finite population setting with fixed potential event times and random treatment assignments. 
		The red lines denote the density of a standard Gaussian distribution.
	}\label{fig:logrank}
\end{figure}

First, we mimic the superpopulation inference conditioning on the treatment assignment. 
We fix the treatment assignment $\bm{Z}$ as the following constant vector: 
\begin{align}\label{eq:fix_treatment}
\bm{Z} = (\underbrace{1, 1, \ldots, 1}_{n/2}, \underbrace{0, 0, \ldots, 0}_{n/2})',
\end{align}
and generate random potential event times $(\bm{T}(1),\bm{T}(0))$ and random potential censoring times $(\bm{C}(1),\bm{C}(0))$ from  models \eqref{eq:model_potential_outcome} and \eqref{eq:model_censor}, respectively. 
The resulting distribution of the logrank statistic is then $\LR$ conditional on the treatment assignment $\bm{Z}$ in \eqref{eq:fix_treatment}. In Figure \ref{fig:logrank}, the four 
white histograms
show the distributions of the logrank statistic under the four cases in Table \ref{tab:simulation}. 
When there exist dependence or heterogeneity among units' potential event times, the conditional distribution of $\LR$  given $\bm{Z}$ 
in \eqref{eq:fix_treatment} is no longer close to a standard Gaussian distribution.

Second, we mimic finite population inference conditioning on the potential event times. 
We fix the potential event times as one realization from model \eqref{eq:model_potential_outcome}:
\begin{align}\label{eq:fix_potential_event_time}
\bm{T}(1) = \bm{T}(0) = \text{a realization from model \eqref{eq:model_potential_outcome}},
\end{align}
and generate random potential censoring times $(\bm{C}(1),\bm{C}(0))$ and  random treatment assignment $\bm{Z}$ 
from \eqref{eq:model_censor} and \eqref{eq:model_treatment}, respectively. 
The resulting distribution of the logrank statistic is then $\LR$ conditional on $\bm{T}(1)=\bm{T}(0)$ in \eqref{eq:fix_potential_event_time}. 
In Figure \ref{fig:logrank}, the four gray histograms show the distributions of the logrank statistic under the four cases in Table \ref{tab:simulation} or equivalently four different realizations of the potential event times.
From Figure \ref{fig:logrank}, %
they 
are all close to a standard Gaussian distribution.

Third, we consider the scenario where all potential event times, potential censoring times and treatment assignments are randomly generated from \eqref{eq:model_potential_outcome}, \eqref{eq:model_censor} and \eqref{eq:model_treatment}, respectively. 
The resulting distribution of the logrank statistic can be viewed as the conditional distribution of $\LR$ given $\bm{T}(1)$ and $\bm{T}(0)$ marginalizing over all potential event times. 
From the finite population simulation, we expect the distribution of $\LR$ to be close to a standard Gaussian distribution. 
This is confirmed by Figure A3 in the Supplementary Material. 

From the simulation, randomization justifies the logrank test, 
without requiring distributional assumptions (such as i.i.d.) on the potential event times. 
In the remainder of the paper, we will study explicitly the conditional distribution of $\LR$ given all the potential event times, and prove its asymptotic Gaussianity under certain regularity conditions.

\section{Randomization distribution of the logrank statistic}\label{sec:rand_dist_logrank}

In this section, we study the randomization distribution of the logrank statistic 
under the finite population inference, where the potential event times are being conditioned on or equivalently viewed as fixed constants,  
i.e., 
$\LR \mid \bm{T}(1), \bm{T}(0)$. 
Below we first introduce some finite population quantities depending only on the potential event times, which are also summarized in Table \ref{tab:notation}. 
For the potential event times under control, 
we use $t_1 < \ldots < t_K$ to denote distinct values in $\{T_i(0): 1\leq i\le n\}$, where $K$ is the total number of distinct event times. 
Let 
$d_k = \sum_{i=1}^{n} \I (T_i(0) = t_k)$ be the number of units potentially having events at time $t_k$, 
$n_k = \sum_{i=1}^{n} \I (T_i(0) \ge t_k)$ be the number of units potentially having events no earlier than time $t_k$, 
and $h_k = d_k/n_k$ be the hazard at time $t_k$.
We introduce $F(\cdot)$ to denote the empirical survival distribution for the control potential event times, i.e., 
$F(t) = n^{-1} \sum_{i=1}^n \I\{T_i(0)\ge t\}$. 
Then, by definition, 
$F(t) = 1 = n_1/n$ when $t\le t_1$, 
$F(t) = n_k/n$ when $t_{k-1}<t\le t_k$, for $2\le k \le K$, and $F(t) = 0$ when $t>t_K$. 
Consequently, 
the hazard function for the distribution $F(\cdot)$ is 
$\lambda(t)$, which takes value $h_k$ when $t =t_k$ for $1\le k \le K$ and zero otherwise, 
and the corresponding integrated hazard function is 
$
\Lambda(t) = \sum_{k:t_k \le t} h_k. 
$

\subsection{Contingency tables under randomization}\label{sec:conti_table_finite}

We first investigate the contingency table at time $t$ (i.e., Table \ref{tab:contingency}) under finite population inference.  
The treatment assignment $\bm{Z}$, instead of being conditioned on as in Section \ref{sec:review_logrank}, plays an important role in inducing randomness of the quantities in Table \ref{tab:contingency}. 
As demonstrated in Appendix A4 of the Supplementary Material, for any unit at risk at time $t$, no matter whether it had an event or not (i.e., in Column 1 or 2 of Table \ref{tab:contingency}), it has the same probability of receiving treatment or control (i.e., in Row 1 or 2 of Table \ref{tab:contingency}). 
The mutual independence among $(Z_i, W_i, \Delta_i)$'s under Assumptions \ref{asmp:rbe} and \ref{asmp:noninformative} then implies that, conditional on the margins of Table \ref{tab:contingency}, $\overline{D}_1(t)$ follows a hypergeometric distribution. 

\begin{theorem}\label{thm:hyper_randomization}
	Under Assumptions \ref{asmp:rbe} and \ref{asmp:noninformative} and the null hypothesis $H_0$ in \eqref{eq:H0}, 
	and conditioning on all the potential event times $\bm{T}(1)$ and $\bm{T}(0)$, 
	\begin{align}\label{eq:hyper_randomization}
	\overline{D}_1(t) \mid \bm{T}(1), \bm{T}(0), \overline{N}(t), \overline{D}(t), \overline{N}_1(t)  
	\ \sim \   \Hypergeometric(\overline{N}(t), \overline{D}(t), \overline{N}_1(t)). 
	\end{align}
\end{theorem}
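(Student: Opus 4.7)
The plan is to leverage the mutual independence of $(Z_i, W_i, \Delta_i)$ across units, together with the key observation that under $H_0$ in \eqref{eq:H0}, the realized event time $T_i = T_i(0)$ is a fixed constant once we condition on $\bm{T}(1), \bm{T}(0)$. First I would combine Assumptions \ref{asmp:rbe} and \ref{asmp:noninformative} to deduce that the triples $\{(Z_i, C_i(1), C_i(0)) : 1 \le i \le n\}$ are i.i.d.\ across $i$ and jointly independent of $(\bm{T}(1), \bm{T}(0))$, and hence the observables $\{(Z_i, W_i, \Delta_i)\}$ are mutually independent across units conditional on the potentials. The overall strategy is to pass through a richer conditioning: first condition on the entire at-risk set $R(t) = \{i : W_i \ge t\}$, establish the hypergeometric law at that level, and then marginalize back to the coarser conditioning on $(\overline{N}(t), \overline{D}(t), \overline{N}_1(t))$ via the tower property.

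The key per-unit calculation runs as follows. For any $i$ with $T_i(0) \ge t$, being at risk at $t$ reduces to $C_i \ge t$; since $C_i$ equals the $Z_i$-selected coordinate of $(C_i(1), C_i(0))$ and $Z_i$ is independent of this pair,
\[
\pr(Z_i = z, \, i \in R(t) \mid \bm{T}(1), \bm{T}(0)) = p_z G_z(t), \qquad z \in \{0,1\}.
\]
Crucially, this probability depends on $T_i(0)$ only through $T_i(0) \ge t$, and not on whether $T_i(0) = t$ (unit $i$ has an event at $t$) or $T_i(0) > t$. Consequently, conditional on $i \in R(t)$, the treatment probability
\[
\tilde{p}(t) = \frac{p_1 G_1(t)}{p_1 G_1(t) + p_0 G_0(t)}
\]
is the same for every unit in the at-risk set.

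By independence across units, once we condition on $R(t) = S$ for any specific subset $S$, the assignments $\{Z_i : i \in S\}$ are i.i.d.\ $\Bernoulli(\tilde{p}(t))$. The standard combinatorial fact that i.i.d.\ Bernoullis conditioned on their sum form a simple random sample then implies that, given $\overline{N}_1(t)$, the treated set $\{i \in S : Z_i = 1\}$ is uniformly distributed over size-$\overline{N}_1(t)$ subsets of $S$. Counting how many of these treated indices land in the fixed event set $\{i \in S : T_i(0) = t\}$ of size $\overline{D}(t)$ produces $\overline{D}_1(t) \sim \Hypergeometric(\overline{N}(t), \overline{D}(t), \overline{N}_1(t))$ with $\overline{N}(t) = |S|$. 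Because this conditional distribution depends on $S$ only through the pair $(\overline{N}(t), \overline{D}(t))$, the tower property collapses the conditioning from $(R(t), \overline{N}_1(t))$ down to $(\overline{N}(t), \overline{D}(t), \overline{N}_1(t))$ without changing the conditional law, which is exactly \eqref{eq:hyper_randomization}.

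The main obstacle I anticipate is the care required in the per-unit probability calculation, specifically verifying that we can ignore the within-unit dependence between $C_i(1)$ and $C_i(0)$ that Assumption \ref{asmp:noninformative} allows. The resolution is clean: only the coordinate $C_i(Z_i)$ ever enters the indicator $\I(C_i \ge t)$, and its conditional law given $Z_i$ is simply the marginal law of $C_i(Z_i)$ by the independence built into Assumption \ref{asmp:rbe}. Once this observation is in place, the remaining arguments amount to the classical derivation of the Fisher-exact null distribution on a $2 \times 2$ table, transported to the randomization-inference setting, and no further subtlety arises.
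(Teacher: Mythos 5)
Your proposal is correct and follows essentially the same route as the paper: the per-unit computation that every at-risk unit has the same conditional treatment probability $\phi(t)=p_1G_1(t)/\{p_1G_1(t)+p_0G_0(t)\}$ regardless of whether it has an event at $t$ is the paper's Lemma A1, the i.i.d.-Bernoulli-given-the-at-risk-set step (conditioned on the sum, yielding the hypergeometric count over the event set) is the paper's Lemma A2 plus the binomial-ratio computation, and the final tower-property collapse to $(\overline{N}(t),\overline{D}(t),\overline{N}_1(t))$ matches the paper's concluding iterated-expectation step. No gaps.
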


In the superpopulation setting in Section  \ref{sec:review_logrank}, the hypergeometric distribution \eqref{eq:hypergeo_super} of the same form is justified by the i.i.d.\ potential event times with fixed treatment assignments. 
However, \eqref{eq:hyper_randomization} in Theorem \ref{thm:hyper_randomization}  is justified by the random treatment assignments with fixed potential event times. 
In particular, Theorem \ref{thm:hyper_randomization} does not impose any distributional assumption on the potential event times, and thus  
allows 
them 
to have unit-specific distributions and arbitrary dependence structure. 
Moreover, comparing the justification for hypergeometric distributions in \eqref{eq:hypergeo_super} and \eqref{eq:hyper_randomization}, 
the former relies on the fact that units in either row of Table \ref{tab:contingency} have the same probability to fall in either column, 
while the latter relies on the fact that units in either column of Table \ref{tab:contingency} have the same probability to fall in either row. 
In simple words, one takes a row-wise perspective, while the other takes a column-wise perspective.

\subsection{A martingale difference sequence representation}\label{sec:mds}

Recall that $\{t_1, \ldots, t_K \}$ are the distinct values of all control potential event times. 
Under Fisher's null $H_0$ in \eqref{eq:H0}, 
the observed event times must be in the set $\{t_1, \ldots, t_K\}$. 
Thus, the summations in \eqref{eq:logrankstat} only need to be taken over $t_1, t_2,  \ldots, t_K$. 
To facilitate the discussion, 
at each time $t_k$, 
we introduce 
\begin{align*}
	N_{1k} = \overline{N}_1(t_k), 
	\quad 
	N_{0k} = \overline{N}_0(t_k), 
	\quad
	N_k = \overline{N}(t_k), 
	\qquad (1\le k \le K)
\end{align*}
to denote the numbers of units at risk in treated, control and both groups, 
and 
\begin{align*}
	D_{1k} = \overline{D}_1(t_k), 
	\quad
	D_{0k} = \overline{D}_0(t_k), 
	\quad
	D_k = \overline{D}(t_k), 
	\qquad (1\le k \le K)
\end{align*}
to denote the numbers of units having events in treated, control and both groups. 
Analogous to \eqref{eq:mean_var_conti}, 
for $1\le k \le K$, 
we further define 
\begin{align*}
	M_k = \overline{\ED}(t_k) = 
	\frac{D_k N_{1k}}{N_k},
	\quad 
	V_k = \overline{V}(t_k) = 
	\frac{
		D_k (N_k-D_k) N_{1k} N_{0k}
	}{
		N_k^2 (N_k-1)
	}. 
\end{align*}
The overall observed-expected difference and the corresponding overall variance are, respectively, 
\begin{align}\label{eq:L_U}
	L \equiv \sum_{k=1}^{K} (D_{1k} - \ED_{k})
	 \ \ \ \text{and} \ \ \  U \equiv \sum_{k=1}^{K} V_{k}. 
\end{align}
We can then rewrite the logrank statistic in \eqref{eq:logrankstat} as
\begin{align}\label{eq:logrank_randomization}
\LR = 
\left( \sum_{k=1}^K V_{k} \right) ^{-1/2}\sum_{k=1}^K (D_{1k} - \ED_{k})
=  
U^{-1/2} L. 
\end{align}

The difficulty in studying the distribution of \eqref{eq:logrank_randomization} comes from the dependence among all the contingency tables at times $t_1, \ldots, t_K$. 
Fortunately, 
as demonstrated shortly, 
the sequence of the observed-expected differences, $\{D_{1k} - M_{k}: k=1,\ldots, K\}$, is actually a martingale difference sequence. 
Define $t_0 = -\infty$ and $t_{K+1} = \infty$ for descriptive convenience. 
For $0\le k \le K$, define a $\sigma$-algebra $\mathcal{F}_k$ as 
\begin{align*}%
\mathcal{F}_{k} & = \sigma\left\{
\I(W_i \geq t_q), 
\Delta_i\I(W_i =t_q), 
Z_i \Delta_i \I(W_i = t_{q-1}), 
\vphantom{\sum_{j=1}^{n} Z_j \I(W_j \geq t_q)}
\right.\\
& \quad \ 
\left. 
\qquad
\qquad
\qquad
\qquad
\sum_{j=1}^{n} Z_j \I(W_j \geq t_q), 
\quad
1\leq q \leq k+1, \ \ 1\leq i\leq n
\right\}. 
\end{align*}
Intuitively, 
$\mathcal{F}_{k}$ contains the information of whether the units had events or were at risk up to time $t_{k+1}$, the treatment indicators for units having events up to time $t_k$, and the numbers of treated units at risk up to time $t_{k+1}$. 
We can verify that $\mathcal{F}_k$ contains the contingency tables up to time $t_k$, and margins of the contingency table at time $t_{k+1}$. 
Consequently, $D_{1k}$ is $\mathcal{F}_k$-measurable, and $(N_k, D_k, N_{1k})$ are $\mathcal{F}_{k-1}$-measurable. 
Moreover, the information in  $\mathcal{F}_{k-1}$ other than $(N_k, D_k, N_{1k})$ does not change the conditional distribution of $D_{1k}$. 

\begin{theorem}\label{thm:mds}
	Under Assumptions \ref{asmp:rbe} and \ref{asmp:noninformative} and the null hypothesis $H_0$ in \eqref{eq:H0}, 
	and conditioning on all the potential event times $\bm{T}(1)$ and $\bm{T}(0)$, 
	\begin{itemize}
		\item[(i)] 
		$\{D_{1k}-M_{k}\}_{k=1}^K$ is a martingale difference sequence with respect to the filtration $\mathcal{F}_k$'s;
		\item[(ii)] 
		conditional on $\mathcal{F}_{k-1}$, $D_{1k}$ follows a hypergeometric distribution with parameters $(N_{k},  D_k, N_{1k})$, i.e., 
		\begin{align*}
		D_{1k} \mid \bm{T}(1), \bm{T}(0), \mathcal{F}_{k-1}
		\ \sim \   \Hypergeometric(N_{k},  D_k, N_{1k}). 
		\end{align*}
	\end{itemize}
\end{theorem}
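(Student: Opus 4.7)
My plan is to first derive part (i) from part (ii) and then focus on part (ii), which is the substantive claim. Since $D_{1k}=\sum_i Z_i\Delta_i\I(W_i=t_k)$ is $\mathcal{F}_k$-measurable by the construction of $\mathcal{F}_k$, and $M_k=D_kN_{1k}/N_k$ is a function of $\mathcal{F}_{k-1}$-measurable quantities, the martingale-difference identity $\E[D_{1k}-M_k\mid\mathcal{F}_{k-1}]=0$ follows immediately from part (ii) via the hypergeometric mean formula. My strategy for part (ii) is to refine Theorem \ref{thm:hyper_randomization} by showing that the additional information in $\mathcal{F}_{k-1}$ beyond the three margins $(N_k,D_k,N_{1k})$ is conditionally uninformative about $D_{1k}$. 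The key structural consequence of Assumptions \ref{asmp:rbe} and \ref{asmp:noninformative} that I will lean on is that, conditional on $\bm{T}(1),\bm{T}(0)$, the triples $(Z_i,C_i(1),C_i(0))$ are mutually independent across $i=1,\ldots,n$; under $H_0$, $(W_i,\Delta_i)$ is then a deterministic function of this triple and $T_i(0)$.

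Let $\mathcal{R}_k=\{i:W_i\ge t_k\}$ denote the set of at-risk units at $t_k$ and $\mathcal{D}_k=\{i:\Delta_i=1,W_i=t_k\}$ the set of units with observed events at $t_k$; both are $\mathcal{F}_{k-1}$-measurable. My next step is to observe that the individual-level generators of $\mathcal{F}_{k-1}$ attached to any $i\in\mathcal{R}_k$ collapse, after using $W_i\ge t_k$, to the single indicator $\I\{i\in\mathcal{D}_k\}$, which under $H_0$ coincides with $\I\{T_i(0)=t_k\}$ on $\{i\in\mathcal{R}_k\}$ and is hence $\bm{T}$-measurable. Thus after conditioning on $\{i\in\mathcal{R}_k\}$ and $\bm{T}$, the only remaining individual constraint is $C_i(Z_i)\ge t_k$. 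A short computation using the i.i.d.\ structure of $(Z_i,C_i(1),C_i(0))$ gives
$\Pr(Z_i=1\mid C_i(Z_i)\ge t_k)=p_1G_1(t_k)/\{p_1G_1(t_k)+p_0G_0(t_k)\}\equiv q$,
a constant independent of $i$. Consequently, prior to any aggregate conditioning, $\{Z_i:i\in\mathcal{R}_k\}$ are i.i.d.\ $\Bernoulli(q)$.

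The final step is to show that the aggregate generators of $\mathcal{F}_{k-1}$ impose no constraint on $\{Z_i:i\in\mathcal{R}_k\}$ beyond the current sum $N_{1k}$. The individual treatments $Z_i\Delta_i\I(W_i=t_q)$ for $q<k$ concern units whose observed events precede $t_k$, hence lie outside $\mathcal{R}_k$; and each earlier at-risk count decomposes as $N_{1,q}-N_{1k}=\sum_{i\in\mathcal{R}_q\setminus\mathcal{R}_k}Z_i$, so given $N_{1k}$, the earlier counts are functions of $\{Z_i:i\notin\mathcal{R}_k\}$ only. By the product structure of the triples across units, any such out-of-$\mathcal{R}_k$ information is conditionally independent of $\{Z_i:i\in\mathcal{R}_k\}$. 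Therefore the only effective constraint on $\{Z_i:i\in\mathcal{R}_k\}$ is $\sum_{i\in\mathcal{R}_k}Z_i=N_{1k}$, and conditioning i.i.d.\ Bernoullis on their sum produces the uniform law over all configurations with that sum. Summing this uniform vector over the fixed subset $\mathcal{D}_k\subseteq\mathcal{R}_k$ of size $D_k$ yields $D_{1k}\sim\Hypergeometric(N_k,D_k,N_{1k})$, as claimed. The main technical obstacle I anticipate is the careful bookkeeping that decomposes each aggregate generator of $\mathcal{F}_{k-1}$ into its in- and out-of-$\mathcal{R}_k$ parts, which is what underlies the conditional-independence step.
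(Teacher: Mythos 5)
Your proposal is correct and follows essentially the same route as the paper's proof: the paper likewise reduces $\mathcal{F}_{k-1}$ to the margins plus information about units outside the risk set (via an auxiliary $\sigma$-algebra $\mathcal{H}_{k-1}$ built from $\sum_j Z_j\I(t_q\le W_j<t_k)$, which is exactly your in/out-of-$\mathcal{R}_k$ decomposition), uses the constant at-risk propensity $\phi(t_k)$ and unit-level independence, and obtains the hypergeometric law by conditioning the resulting independent Bernoulli treatment indicators on their sum $N_{1k}$. Your "i.i.d.\ Bernoullis conditioned on their sum are uniform, hence sub-sums are hypergeometric" step is just a repackaging of the paper's explicit product-of-binomials computation followed by division by the marginal of $N_{1k}$.
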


Theorem \ref{thm:mds} implies that $L$ in \eqref{eq:L_U} is a summation of a martingale difference sequence. 
This is similar to usual superpopulation inference, where $L$ is related to integrals with respect to martingales. 
Under both finite population and superpopulation inferences, the martingale property plays a critical role in the large-sample asymptotic analyses. 
However, they are driven by different sources of randomness: the martingale property under finite population inference relies on the random treatment assignments, while that under the superpopulation inference relies on the random potential event times.
Martingale properties driven by random treatment assignments have also been utilized in settings with sequentially assigned treatments over time; see, e.g., recent work of \citet{Bojinov2019} and \citet{papadogeorgou2020causal}.

Below we investigate the first two moments of the randomization distribution of the observed-expected difference $L$ in \eqref{eq:L_U}. 
Let $G(t) = p_1 G_1(t) + p_0 G_0(t)$ be the survival function of 
the realized censoring time 
$C_i$, $g_k = G(t_k)$ be the probability that 
the realized censoring time 
$C_i$ 
is no less than $t_k$,
and 
$\phi_k = p_1 G_1(t_k)/G(t_k)$ be the conditional probability of receiving active treatment given 
that the realized censoring time is no less than $t_k$.

\begin{theorem}\label{thm:mean_var_L}
	Under Assumptions \ref{asmp:rbe} and \ref{asmp:noninformative} and the null hypothesis $H_0$ in \eqref{eq:H0}, 
	and conditioning on all the potential event times $\bm{T}(1)$ and $\bm{T}(0)$,  
	the observed-expected difference 
	$L$ in \eqref{eq:L_U} has mean 0, i.e., 
	$\E\{ L \mid \bm{T}(1), \bm{T}(0)\} = 0$, 
	and it has variance 
	\begin{align}\label{eq:var_L_finite}
	& \quad \ \Var\left\{ L \mid \bm{T}(1), \bm{T}(0) \right\} \\
	& 
	= 
	\E\left\{ U \mid \bm{T}(1), \bm{T}(0)  \right\} 
	= \sum_{k=1}^{K} \E\left\{ V_{k} \mid \bm{T}(0), \bm{T}(0) \right\}
	\nonumber
	\\
	& 
	= \sum_{k=1}^{K}
	\frac{n_k^2}{n_k-1}  h_k (1-h_k) \phi_k (1-\phi_k) 
	\left\{
	g_k - \frac{1 - (1-g_k)^{n_k}}{n_k} 
	\right\}.
	\nonumber
	\end{align}
\end{theorem}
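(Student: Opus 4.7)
The plan is to split the argument into two pieces: (i) obtain the mean and variance identities from the martingale difference structure established in Theorem \ref{thm:mds}, and (ii) compute $\E\{V_k \mid \bm{T}(1),\bm{T}(0)\}$ explicitly by identifying the marginal and conditional laws of $N_k$, $D_k$, and $N_{1k}$.

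First, by Theorem \ref{thm:mds}(i), $\{D_{1k}-M_k\}_{k=1}^K$ is a martingale difference sequence with respect to $\{\mathcal{F}_k\}$, so each term has conditional mean $0$ given $\mathcal{F}_{k-1}$ and hence $\E\{L\mid \bm{T}(1),\bm{T}(0)\}=0$. Orthogonality of martingale differences then gives
\begin{equation*}
\Var\{L\mid \bm{T}(1),\bm{T}(0)\}=\sum_{k=1}^K \E\bigl\{\Var(D_{1k}\mid \mathcal{F}_{k-1})\mid \bm{T}(1),\bm{T}(0)\bigr\}=\sum_{k=1}^K \E\{V_k\mid \bm{T}(1),\bm{T}(0)\},
\end{equation*}
where the last equality uses Theorem \ref{thm:mds}(ii), the known variance of the hypergeometric distribution, and the convention $0/0=0$ to handle the degenerate cases $N_k\in\{0,1\}$ in which $V_k\equiv 0$.

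Next, I would establish three distributional facts (all conditional on $\bm{T}(1),\bm{T}(0)$) under $H_0$ and Assumptions \ref{asmp:rbe}--\ref{asmp:noninformative}. A unit with $T_i(0)\ge t_k$ is at risk at $t_k$ iff $C_i\ge t_k$, and $\pr(C_i\ge t_k)=p_1 G_1(t_k)+p_0 G_0(t_k)=g_k$ independently across the $n_k$ such units; hence $N_k\sim \Binomial(n_k,g_k)$. Because the indicators $\I(C_i\ge t_k)$ are i.i.d.\ Bernoulli across these $n_k$ units, conditional on $N_k$ the at-risk subset is uniformly distributed, and counting how many of its members satisfy $T_i(0)=t_k$ gives $D_k\mid N_k\sim \Hypergeometric(n_k,d_k,N_k)$. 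Finally, using that $\pr(Z_i=1\mid C_i\ge t_k)=p_1 G_1(t_k)/g_k=\phi_k$ together with the independence of $(Z_i,C_i)$ across units, the $Z_i$'s of at-risk units are i.i.d.\ $\Bernoulli(\phi_k)$, so $N_{1k}\mid N_k\sim \Binomial(N_k,\phi_k)$ and moreover $N_{1k}\perp D_k\mid N_k$ (since $N_{1k}$ depends only on the $Z_i$'s, which are independent of $T_i(0)$).

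Given these facts, for $N_k\ge 2$ the elementary moment computations for the hypergeometric and binomial distributions give
\begin{equation*}
\E\{D_k(N_k-D_k)\mid N_k\}=\frac{n_k}{n_k-1}h_k(1-h_k)\,N_k(N_k-1),\quad \E\{N_{1k}N_{0k}\mid N_k\}=\phi_k(1-\phi_k)\,N_k(N_k-1),
\end{equation*}
and by the conditional independence of the two factors,
\begin{equation*}
\E\{V_k\mid N_k\}=\frac{n_k}{n_k-1}h_k(1-h_k)\phi_k(1-\phi_k)(N_k-1).
\end{equation*}
The cases $N_k\in\{0,1\}$ contribute $0$ to $V_k$ by convention, and the correction to $\E\{(N_k-1)\I(N_k\ge 2)\}$ amounts to $\E(N_k)-1+\pr(N_k=0)=n_k g_k-1+(1-g_k)^{n_k}$. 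Factoring $n_k$ out and substituting into $\E\{V_k\}$ yields the claimed formula. The main obstacles are verifying the conditional independence $D_k\perp N_{1k}\mid N_k$ and correctly bookkeeping the $\{N_k\le 1\}$ boundary; once those are in place the remainder is routine algebra.
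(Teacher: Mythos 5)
Your proposal is correct and follows essentially the same route as the paper: the mean and the identity $\Var(L)=\sum_k\E(V_k)$ come from the martingale difference structure of Theorem \ref{thm:mds}, and $\E(V_k)$ is computed via the conditional independence of $D_k$ and $N_{1k}$ given $N_k$ with $D_k\mid N_k\sim\Hypergeometric(n_k,d_k,N_k)$, $N_{1k}\mid N_k\sim\Binomial(N_k,\phi_k)$, and $N_k\sim\Binomial(n_k,g_k)$, exactly as in the paper's Lemmas A3--A5. Your handling of the $N_k\in\{0,1\}$ boundary, giving the correction $\E(N_k)-1+\pr(N_k=0)=n_kg_k-1+(1-g_k)^{n_k}$, matches the paper's computation.
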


From Theorem \ref{thm:mean_var_L}, 
conditional on the potential event times, 
$U$ in \eqref{eq:L_U} is an unbiased variance estimator for 
$L$, 
and thus 
$\LR$ in \eqref{eq:logrank_randomization} is indeed a standardization of $L$. 
This is similar to the superpopulation setting that conditions on the treatment assignments. 
Below we further make a connection between the variance formula of $L$ in \eqref{eq:var_L_finite} and its asymptotic variance formula under the superpopulation inference. 
Recall that $F(\cdot)$ is the empirical survival distribution for the control potential event times $\{T_i(0): 1\le i \le n \}$, 
and $\Lambda(\cdot)$ is the corresponding integrated hazard function. 
Ignoring the terms of order $n_k^{-1}$, 
we can simplify the variance formula in \eqref{eq:var_L_finite} as: 
\begin{align*}
	& \quad \ \Var\left\{ L \mid \bm{T}(1), \bm{T}(0) \right\} \\
	& = 
	\sum_{k=1}^{K} n_k 
	\left( 1 + \frac{1}{n_k-1} \right)  h_k (1-h_k) \phi_k (1-\phi_k) 
	\left\{
	g_k - \frac{1 - (1-g_k)^{n_k}}{n_k} 
	\right\}\\
	& \approx 
	\sum_{k=1}^{K} n_k h_k (1-h_k) \phi_k (1-\phi_k) 
	g_k \\
	& = 
	np_1 p_0 \int_{0}^\infty F(t)    \frac{G_1(t)G_0(t)}{p_1 G_1(t) + p_0G_0(t)}  \left\{ 1- \Delta \Lambda(t) \right\} \text{d} \Lambda(t),
\end{align*} 
which has the same form as the asymptotic variance of $L$ under usual superpopulation inference, except that the distribution $F$ is replaced by the population distribution of the potential event times. %
Therefore, we can view the variance formula \eqref{eq:var_L_finite} as a finite population analogue of the 
superpopulation
variance formula. 
Importantly, 
the former is for the variance of $\LR$ given the potential event times, while the latter is for the variance of $\LR$ given the treatment assignments.

\section{Asymptotic randomization distribution of the logrank statistic}\label{sec:asym_dist}

The exact randomization distribution of the logrank statistic depends on all the potential event times and is thus generally intractable. 
In this section we study its large-sample approximation using only the first two moments. %
The finite population asymptotics embeds the units into a sequence of finite populations with increasing sizes, and studies the limiting distributions of  certain statistics along this sequence of finite populations \citep{lidingclt2016}. 
Here we consider two cases 
depending on the total number of distinct potential event times.  
In the first case, the number of distinct potential event times goes to infinity as the sample size of the finite population goes to infinity; 
in the second case, the number of distinct potential event times is bounded. 
The first case is more reasonable when the event times are continuous, whereas the second case is more reasonable when the event times are discrete with a finite support. 

\subsection{
A diverging number of 
distinct potential event times} 

We first consider the case in which the number of distinct potential event times $K$ goes to infinity as the sample size $n$ increases. 
Let 
\begin{align}\label{eq:d_tilde}
	\tilde{d} = \max_{1\le k\le K} d_k = \max_{1\le k\le K} 
	\left|
	\left\{i: T_i(0) = t_k \right\}
	\right|
\end{align}
be the maximum number of units potentially having events at the same time. 
We further introduce a random vector $(\tilde{C}_i(1), \tilde{C}_i(0))$ that shares the same marginal distributions as $(C_i(1), C_i(0))$ but has two independent components, 
i.e., 
$\tilde{C}_i(z) \sim C_i(z)$ for $z=0,1$ and $\tilde{C}_i(1) \ind \tilde{C}_i(0)$. 
The survival function of $\min\{\tilde{C}_i(1), \tilde{C}_i(0)\}$ 
is then $\tilde{G}(t) = G_1(t) G_0(t)$.
Under Fisher's null $H_0$ in \eqref{eq:H0}, with the pseudo potential censoring times $\tilde{C}_i(1)$ and  $\tilde{C}_i(0)$, we can observe unit $i$'s event no matter {\rev whether} it is assigned to treatment or control if and only if $\min\{\tilde{C}_i(1), \tilde{C}_i(0)\}\ge T_i(1) = T_i(0)$. 
Thus, the expected proportion of units whose events can be observed under both treatment and control with the pseudo potential censoring times %
is 
\begin{align}\label{eq:g_tilde}
\tilde{g} & \equiv 
\E
\left[
\frac{1}{n}
\sum_{i=1}^{n} \I\left\{
\min\{\tilde{C}_i(1), \tilde{C}_i(0)\} \ge T_i(0)
\right\}
\right]
= \frac{1}{n} \sum_{i=1}^{n} \tilde{G}(T_i(0)) 
\\
& 
= \frac{1}{n} \sum_{k=1}^{K} d_k \tilde{G}(t_k). 
\nonumber
\end{align}
Here $\tilde{g}$ is defined in terms of the pseudo potential censoring times and 
may be 
different from that defined in terms of the original potential censoring times. 
Nevertheless, 
since we do not specify the distribution of $(C_i(1), C_i(0))$ and allow arbitrary dependence between them, 
$\tilde{g}$ is a quantity that can be realized when 
$C_i(1)$ and $C_i(0)$ are indeed independent. 
Note that both $\tilde{d}$ and $\tilde{g}$ in \eqref{eq:d_tilde} and \eqref{eq:g_tilde}, as well as $p_z$ and $G_z(\cdot)$ for $z=0,1$, depend on the finite population of size $n$.
For descriptive convenience, we make such dependence implicit. 
As shown in the following condition, 
the limiting behavior of $\tilde{d}$ and $\tilde{g}$ plays an important role in 
the asymptotic Gaussian approximation 
of the logrank statistic. 

\begin{condition}\label{cond:fp_conti}
	Along the sequence of finite populations satisfying $H_0$ in \eqref{eq:H0}, as $n\rightarrow \infty$, for $z=0,1$, 
	\begin{itemize}
		\item[(i)] the probability of receiving treatment arm $z$, $p_z$, has a positive limit;
		\item[(ii)] 
		the maximum number of potentially simultaneous events $\tilde{d}$ in \eqref{eq:d_tilde} and the expected proportion of observed events 
		$\tilde{g}$ in \eqref{eq:g_tilde} satisfy
		\begin{align}\label{eq:cond_conti}
		\frac{\tilde{g} }{\tilde{d}} 
		\min\left\{
		\left(
		\frac{n}{\log n}
		\right)^{1/2}, 
		\frac{n}{\tilde{d}^2}
		\right\}
		= 
		\min\left\{
		\frac{\tilde{g} }{\tilde{d}} 
		\left(
		\frac{n}{\log n}
		\right)^{1/2},  
		\frac{\tilde{g}n}{\tilde{d}^3}
		\right\}
		\converge \infty. 
		\end{align}
	\end{itemize}
\end{condition}

Under finite population inference, the potential event times are being conditioned on or equivalently viewed as fixed constants. 
Consequently, Condition \ref{cond:fp_conti} imposes conditions on all the potential event times directly. 
This is different from regularity conditions under superpopulation inference, which are usually on the population distribution that generates the i.i.d.\ units. 
Specifically, 
Condition \ref{cond:fp_conti} involves $p_z$ for the treatment assignment mechanism, 
$\tilde{d}$ in \eqref{eq:d_tilde} that is a deterministic function of the potential event times, 
and 
$\tilde{g}$ in \eqref{eq:g_tilde} that is determined by both the potential event times and the censoring mechanism. 
These quantities are fixed and can vary with sample size $n$, 
and Condition \ref{cond:fp_conti} imposes some constraints on their limiting behavior as $n$ goes to infinity. 
Note that
$\tilde{g}\le 1$ and $\tilde{d}\ge 1$. 
Condition \ref{cond:fp_conti} implies that $K \ge n/\tilde{d} \rightarrow \infty$ as $n\rightarrow \infty$, i.e., 
the number of distinct potential event times goes to infinity as $n\rightarrow \infty$. 

Below we give some intuition for Condition \ref{cond:fp_conti}, showing that the regularity conditions there are rather weak. 
In Condition \ref{cond:fp_conti}, (i) is a natural requirement, and we investigate (ii) when units are i.i.d.\ samples from a superpopulation. 
Assume that the potential event times $T_i(1)=T_i(0)$'s are i.i.d.\ samples from a superpopulation with continuous distribution, and the distribution of potential censoring times $(C_i(1), C_i(0))$ is the same along the sequence of finite populations. 
If the minimum of the pseudo potential censoring times has a positive probability to be no less than the potential event time, i.e., 
$
\E\{
\tilde{G}(T_i(0))
\} 
=
\E\{
G_1(T_i(0)) G_0(T_i(0))
\} > 0, 
$
then as $n\rightarrow\infty$, with probability one, $\tilde{d}=1$, $\tilde{g}$ has a positive limit,  and (ii) in Condition \ref{cond:fp_conti} holds.

The theorem below shows that Condition \ref{cond:fp_conti} is sufficient for the asymptotic standard Gaussianity of the logrank statistic $\LR$ in \eqref{eq:logrank_randomization}.

\begin{theorem}\label{thm:CLT_conti}
	Under Assumptions \ref{asmp:rbe} and \ref{asmp:noninformative} and the null hypothesis $H_0$ in \eqref{eq:H0}, 
	and conditioning on all the potential event times, 
	if Condition \ref{cond:fp_conti} holds, then $U$ is a consistent variance estimator for $L$ in \eqref{eq:L_U},  i.e.,  
	$$ \frac{U}{\Var\left\{L \mid \bm{T}(1), \bm{T}(0) \right\}} \mid \bm{T}(1), \bm{T}(0) 
	\ \convergep \  1, 
	$$ 
	and 
	the logrank statistic $\LR = U^{-1/2} L$ in \eqref{eq:logrank_randomization} is asymptotically standard Gaussian, i.e., 
	\begin{align}\label{eq:logrank_clt_finite_conti}
		\LR \mid \bm{T}(1), \bm{T}(0) \ \converged \  \mathcal{N}(0,1). 
	\end{align}
\end{theorem}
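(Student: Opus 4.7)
The plan is to apply the martingale central limit theorem to the MDS $\{D_{1k}-M_k\}_{k=1}^K$ identified in Theorem 2. Since Theorem 2(ii) says $D_{1k}\mid \mathcal{F}_{k-1}\sim \Hypergeometric(N_k,D_k,N_{1k})$, we have $V_k=\Var(D_{1k}\mid \mathcal{F}_{k-1})$, and so $U=\sum_{k=1}^K V_k$ is exactly the predictable quadratic variation of the partial sums. A standard MDS CLT (e.g., Hall and Heyde 1980, Theorem 3.2) then yields $L/\sqrt{\Var(L\mid \bm{T}(1),\bm{T}(0))}\converged \mathcal{N}(0,1)$ provided (a) $U/\Var(L\mid \bm{T}(1),\bm{T}(0))\convergep 1$ and (b) a conditional Lindeberg condition on the increments $D_{1k}-M_k$ hold. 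Combining (a) with Slutsky's theorem then converts this into $\LR=U^{-1/2}L\converged \mathcal{N}(0,1)$, which is the statement of the theorem.

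For the Lindeberg step I would exploit the deterministic bound $|D_{1k}-M_k|\le D_k\le \tilde{d}$ that is immediate from the definition of $\tilde{d}$ in \eqref{eq:d_tilde}: the Lindeberg sum is identically zero as soon as $\tilde{d}/\sqrt{\Var(L\mid \bm{T}(1),\bm{T}(0))}\to 0$. To verify this tail condition I would lower-bound the variance using Theorem 3 together with the algebraic identity $\phi_k(1-\phi_k)g_k = p_1p_0\tilde{G}(t_k)/g_k \ge p_1p_0\tilde{G}(t_k)$ and the bound $n_k^2 h_k/(n_k-1)\ge d_k$, which together yield $\Var(L\mid \bm{T}(1),\bm{T}(0))\gtrsim p_1p_0\bigl(1-O(\tilde{d}/n)\bigr)\,n\tilde{g}$ via the identity $\sum_k d_k\tilde{G}(t_k) = n\tilde{g}$. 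The part of Condition \ref{cond:fp_conti}(ii) requiring $\tilde{g}n/\tilde{d}^3\to\infty$ then forces $\tilde{d}^2/(n\tilde{g})\to 0$ whether $\tilde{d}$ is bounded or diverges, so Lindeberg is satisfied.

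The main obstacle is step (a), the concentration of $U$ around its mean $\Var(L\mid \bm{T}(1),\bm{T}(0))$ computed in Theorem 3. The triples $(N_k,D_k,N_{1k})$ are all linear functionals of the $n$ independent vectors $\{(Z_i,C_i(1),C_i(0))\}_{i=1}^n$; for example, $N_{1k}=\sum_{i:T_i(0)\ge t_k} Z_i\I\{C_i(1)\ge t_k\}$ and $D_{zk}$ has an analogous representation at $T_i(0)=t_k$. My approach is to expand each $V_k$ to first order around $(\E N_k,\E D_k,\E N_{1k})$ via the delta method on the smooth map $(N,D,N_1)\mapsto D(N-D)N_1(N-N_1)/[N^2(N-1)]$, apply Hoeffding/Bernstein inequalities unit-wise to each triple, and take a union bound over $k\le K\le n$. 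The remaining $(n/\log n)^{1/2}$ factor in Condition \ref{cond:fp_conti}(ii) is precisely the rate needed for this union bound: $\sqrt{\log n/n}$ is the per-time relative fluctuation after Hoeffding, while $\tilde{g}/\tilde{d}$ measures how large the denominators $N_k$ typically are when $D_k>0$. The delicate subcase is indices $k$ for which $N_k$ is small, where $1/[N_k(N_k-1)]$ is ill-behaved; for those I would fall back on the deterministic bound $V_k\le \tilde{d}$ and show their aggregate contribution is negligible under the same part of Condition \ref{cond:fp_conti}(ii). Once (a) is in hand, the martingale CLT and Slutsky's theorem close the proof.
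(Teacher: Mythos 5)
Your overall architecture is the same as the paper's: Theorem \ref{thm:mds} gives the martingale difference structure, $U=\sum_k V_k$ is the predictable quadratic variation, and the martingale CLT reduces everything to (a) $U/\E(U\mid\bm{T}(1),\bm{T}(0))\convergep 1$ and (b) a conditional Lindeberg condition. Your Lindeberg step is essentially the paper's: $|D_{1k}-M_k|\le\tilde d$ plus a lower bound on $\E(U)$ of order $p_1p_0\, n\tilde g/(\tilde d+1)$, so that $n\tilde g/\tilde d^3\to\infty$ kills the Lindeberg sum identically for large $n$. (Minor slip: your stated bound $\Var(L)\gtrsim p_1p_0(1-O(\tilde d/n))n\tilde g$ drops the factor $1/(\tilde d+1)$ coming from $1-h_k\ge 1/(d_k+1)$; since $h_k=d_k/n_k$ can be close to $1$ when $n_k$ is of order $d_k$, the correct bound is $\gtrsim p_1p_0\,n\tilde g/(\tilde d+1)$. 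This does not break your conclusion because Condition \ref{cond:fp_conti}(ii) carries the extra power of $\tilde d$, but the inequality as written is false.)

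The genuine gap is in step (a). You propose to linearize each $V_k$ around $(\E N_k,\E D_k,\E N_{1k})$, apply Hoeffding/Bernstein to each triple, and take a union bound over the $K\le n$ time points. A per-$k$ high-probability bound followed by the triangle inequality over $k$ forfeits all cancellation across time points: even in the most favorable regime ($\tilde d=1$, $\tilde g\asymp 1$, $K\asymp n$), the per-$k$ fluctuation of $V_k$ is of order $\sqrt{d_k\log n}$, and $\sum_k\sqrt{d_k\log n}$ can be as large as $n\sqrt{\log n}$, which dwarfs $\E(U)\asymp n$. The paper's proof avoids this by bounding second moments of the \emph{aggregated} deviations: it decomposes $V_k-\E(V_k)$ into four pieces ($\xi_{1k},\ldots,\xi_{4k}$, obtained by successively replacing $N_{1k}(N_k-N_{1k})$, $N_k-D_k$, and $D_k$ by conditional means given $N_k$), exploits the independence across $k$ of the $D_k-\E(D_k)$ fluctuations (disjoint sets of units) to get $\Var(\sum_k\xi_{3k})\le n$, and uses the telescoping bound $\sum_k h_k\le\log n$ together with Cauchy--Schwarz to show $\E\{(\sum_k\xi_{1k})^2\}\le n\log n$ and an analogous bound for $\xi_{2k}$. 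This is where the $(n/\log n)^{1/2}$ in Condition \ref{cond:fp_conti}(ii) actually comes from --- not from a union bound. To repair your argument you would need to apply a single concentration/second-moment bound to the aggregated linearized statistic (a sum of independent unit-level contributions), rather than bounding each $k$ separately; the small-$N_k$ indices you flag are, by contrast, handled easily since $\sum_{k\ge k_0}D_k\le N_{k_0}$.
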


The asymptotic Gaussianity in Theorem \ref{thm:CLT_conti} is established using the martingale central limit theorem \citep{hall1980}. 
In particular, the first term in \eqref{eq:cond_conti} guarantees the consistency of the variance estimator $U$, 
and the second term in \eqref{eq:cond_conti} %
guarantees the conditional Lindeberg condition. 
Moreover, 
although \eqref{eq:logrank_clt_super} and \eqref{eq:logrank_clt_finite_conti} lead to the same asymptotic distribution for the logrank statistic, they rely on different sources of randomness.
Specifically, the asymptotics in \eqref{eq:logrank_clt_super} relies on the random potential event times that are assumed to be i.i.d.\ from {\rev some} population while fixing the treatment assignments. 
On the contrary, 
the asymptotics in \eqref{eq:logrank_clt_finite_conti} relies on the random treatment assignments that can be physically implemented while fixing the potential event times, which are then allowed to have arbitrary dependence and heterogeneity across units. 
Therefore, 
Theorem \ref{thm:CLT_conti}, or more generally finite population inference, can sometimes provide a broader justification for the logrank test compared to usual superpopulation inference. 
This is also confirmed by the simulation results in Section \ref{sec:simu_logrank}.

\subsection{
A bounded number of distinct potential event times
}\label{sec:finite_distinct_event_times}

We now consider the case in which the number of distinct potential event times $K$ is bounded as the sample size $n$ increases. 
Without loss of generality, we assume that $K$ is fixed along the sequence of finite populations, because we can always add several distinct event times with no units potentially having events at those times. 
When $K$ is fixed, 
for a specific time $t_k$,  
if (i) there are non-negligible proportions of units receiving both treatment and control, 
(ii) the potential censoring times under both treatment and control have positive probabilities to be no less than $t_k$, 
and (iii) there are non-negligible proportions of units potentially having events both at time $t_k$ and later than $t_k$, 
then we expect that all margins of the contingency table at time $t_k$, $(N_{1k}, N_{0k}, D_k, N_k - D_k)$, as well as the conditional variance $V_{k}$, go to infinity as the sample size $n$ goes to infinity. 
The property of hypergeometric distributions then enables us to establish the 
Gaussian approximation 
for $V_{k}^{-1/2}(D_{1k}-\ED_{k})$ \citep[see, e.g.,][]{lehmann2006nonparametrics, Vatutin1982, lidingclt2016}. 

However, due to the dependence among the contingency tables, the marginal asymptotic Gaussianity of $V_{k}^{-1/2}(D_{1k}-\ED_{k})$, {\rev for $k=1,\ldots, K$}, does not guarantee their joint asymptotic Gaussianity, as well as the 
Gaussian approximation 
for the logrank statistic. 
Therefore, 
we need a finer analysis for the Gaussian approximation of hypergeometric distributions. %
We invoke
the Berry--Esseen type result for bounding the difference between a hypergeometric distribution and its Gaussian approximation \citep[][Theorem 2.3]{kou1996asymptotics}. 
This %
further helps us analyze
the difference between the joint distribution of the standardized conditional hypergeometric random variables $V_{k}^{-1/2}(D_{1k}-\ED_{k})$'s and the multivariate standard Gaussian distribution. 

Recall that the finite population asymptotics embeds the units into a sequence of finite populations with increasing sizes. 
We impose the following regularity conditions on the sequence of finite populations, 
which involve 
$p_z$'s for the treatment assignment mechanism, 
$G_{z}(\cdot)$'s for the censoring mechanism, 
and %
$(d_k, h_k)$'s
that are deterministic functions of the potential event times.

\begin{condition}\label{cond:fp_discrete}
	Along the sequence of finite populations satisfying $H_0$ in \eqref{eq:H0}, as $n\rightarrow \infty$, 
	the number of distinct potential event times $K$ is fixed, and  
	for $z=0,1$ and $1\le k \le K$, 
	\begin{itemize}
		\item[(i)] the probability of receiving treatment arm $z$, $p_z$, has a positive limit; 
		\item[(ii)] the proportion of units potentially having events at time $t_k$, $d_k/n$, has a limiting value;
		\item[(iii)] the survival function of the potential censoring time evaluated at time $t_k$, 
		$G_z(t_k)$, has a limiting value;
		\item[(iv)] there exists at least one 
		$k$ such that $d_k/n, G_{1}(t_k)$ and $G_{0}(t_k)$ have positive limits, 
		and $\lim_{n\rightarrow \infty} h_k < 1$.
	\end{itemize}
\end{condition}

Below we give some intuition for Condition \ref{cond:fp_discrete}, showing that the regularity conditions are rather weak. 
In Condition \ref{cond:fp_discrete}, 
(i) is a natural requirement,
and 
the limit $h_k$ in (iv) exists due to 
(ii) and the positive limit of $d_k/n$.
{\rev Analogous to the discussion after Condition \ref{cond:fp_conti}, we investigate (ii)--(iv) when units are i.i.d.\ samples from a superpopulation. 
Specifically, we assume that the potential event times $T_i(1)=T_i(0)$'s are i.i.d. from some distribution, and the distribution of the potential censoring times $(C_i(1), C_i(0))$ is the same along the sequence of finite populations. 
Moreover, we assume that the distribution of the potential event time is discrete and has a finite support $\mathcal{T}$. 
}
If $\mathcal{T}$ has at least two elements, and 
the values of both $G_1(\cdot)$ and $G_2(\cdot)$ evaluated at the  smallest element of $\mathcal{T}$ are positive, 
then as $n \rightarrow \infty$, with probability one, (ii)--(iv) in Condition \ref{cond:fp_discrete} hold.

The following theorem shows that Condition \ref{cond:fp_discrete} is sufficient for the asymptotic standard Gaussianity of the logrank statistic $\LR$ in \eqref{eq:logrank_randomization}

\begin{theorem}\label{thm:CLT_discrete}
Theorem \ref{thm:CLT_conti} holds if we replace Condition \ref{cond:fp_conti} by  Condition
\ref{cond:fp_discrete}. 
\end{theorem}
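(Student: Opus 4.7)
Since Condition \ref{cond:fp_discrete} keeps $K$ bounded, each martingale increment $D_{1k}-M_k$ is of order $\sqrt{n}$ and does not vanish after normalization, so the conditional Lindeberg argument that drives Theorem \ref{thm:CLT_conti} no longer applies. My plan is instead to combine the exact conditional hypergeometric law from Theorem \ref{thm:mds}(ii) with a Berry--Esseen bound for the hypergeometric \citep[e.g., Theorem 2.3 of][]{kou1996asymptotics} to establish joint asymptotic Gaussianity of the $K$ standardized increments directly, and then reassemble $\LR$.

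\textbf{Step 1 (margin and variance concentration).} For each $k$, each of $N_k, N_{1k}, N_{0k}, D_k$ is a sum of independent bounded indicators in $i$ once we condition on the fixed potential event times (using Assumptions \ref{asmp:rbe}, \ref{asmp:noninformative} and $H_0$), with means determined by $d_k, n_k, g_k, \phi_k$. Chebyshev's inequality gives that $N_k/n, N_{1k}/n, N_{0k}/n, D_k/n$ converge in probability to the limits prescribed by Condition \ref{cond:fp_discrete}(i)--(iii). Part (iv) furnishes an index $k^\ast$ where all four margins grow linearly in $n$ and $h_{k^\ast}$ stays bounded away from $1$, so $V_{k^\ast}/n \convergep v_{k^\ast}>0$; for every $k$, $V_k/n$ has a nonnegative limit $v_k$. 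Hence $U/n \convergep \sum_k v_k>0$, and comparing with the formula in Theorem \ref{thm:mean_var_L} via dominated convergence shows that $\Var\{L\mid \bm{T}(1),\bm{T}(0)\}/n$ has the same limit, so $U/\Var\{L\mid \bm{T}(1),\bm{T}(0)\}\convergep 1$.

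\textbf{Step 2 (joint Gaussianity via iterated conditioning).} Define $\tilde Y_k = (D_{1k}-M_k)/\sqrt{V_k}$ for indices with $v_k>0$; indices with $v_k=0$ contribute $o_{\Pr}(\sqrt{n})$ to $L$ and can be absorbed into a negligible remainder. To prove
\begin{align*}
\E\bigl[e^{i\sum_k \theta_k \tilde Y_k}\bigm| \bm{T}(1),\bm{T}(0)\bigr] \ \converge \ \prod_k e^{-\theta_k^2/2},
\end{align*}
I would peel off the $K$-th factor using the tower property. Theorem \ref{thm:mds}(ii) makes the inner expectation $\E[e^{i\theta_K \tilde Y_K}\mid \mathcal{F}_{K-1}]$ the characteristic function of a conditionally standardized hypergeometric whose parameters $(N_K, D_K, N_{1K})$ grow linearly in $n$ on a set of probability tending to one by Step 1, so the Berry--Esseen bound forces this inner factor to $e^{-\theta_K^2/2}$ in probability. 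Bounded convergence (the integrand has modulus one) then passes this into the outer expectation; iterating down to $k=1$ delivers the product.

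\textbf{Step 3 (reassembly and main obstacle).} Joint characteristic-function convergence gives joint asymptotic independent standard Gaussianity of the $\tilde Y_k$'s. Writing $\LR = L/\sqrt{U} = \sum_k \sqrt{V_k/U}\,\tilde Y_k$, the weights $V_k/U$ converge in probability to $v_k/\sum_j v_j$ and sum to one, so the continuous mapping theorem together with Slutsky's theorem yield $\LR\mid \bm{T}(1),\bm{T}(0)\converged \mathcal{N}(0,1)$. The principal difficulty is Step 2: with $K$ fixed, the classical martingale CLT does not apply, so one must pipe a quantitative Berry--Esseen bound through $K$ successive layers of conditioning while simultaneously handling the randomness of the hypergeometric parameters $(N_k, D_k, N_{1k})$ that appear in each conditional law.
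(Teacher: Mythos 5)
Your proposal is correct and follows essentially the same route as the paper: establish consistency of $U$ via concentration of the margins $(N_k, N_{1k}, D_k)$, prove joint asymptotic Gaussianity of the standardized increments by iterating the hypergeometric Berry--Esseen bound of \citet{kou1996asymptotics} through the filtration from Theorem \ref{thm:mds}, and reassemble $\LR$ with Slutsky's theorem after discarding the negligible indices. The only cosmetic differences are that the paper runs the iterated-conditioning argument on rectangle probabilities (joint distribution functions) rather than characteristic functions --- avoiding the extra step of converting a Kolmogorov-distance bound into convergence of conditional characteristic functions --- and controls the small-$V_k$ contributions through explicit inverse-moment bounds on $\E\{\I(V_k>0)/V_k^{1/2} \mid \bm{T}(1),\bm{T}(0)\}$ and $\pr(V_k=0 \mid \bm{T}(1),\bm{T}(0))$ rather than a good-event splitting.
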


Although leading to the same conclusion, Theorem \ref{thm:CLT_discrete} differs significantly from Theorem \ref{thm:CLT_conti}. 
The asymptotic Gaussianity in Theorem \ref{thm:CLT_conti} requires infinitely many distinct potential event times and 
is justified by the martingale central limit theorem, 
while the one in Theorem \ref{thm:CLT_discrete} requires a bounded number of distinct potential event times and is justified by the Gaussian approximation of hypergeometric distributions. 
Moreover, 
both %
Theorems \ref{thm:CLT_conti} and \ref{thm:CLT_discrete}
differ from 
the asymptotic Gaussianity in 
\eqref{eq:logrank_clt_super} under usual superpopulation inference;
the former two fix the potential event times and rely crucially on the randomness of the treatment assignments 
while the latter one is the opposite. 

\section{A simulation study under violation of Assumptions 1 or 2}\label{sec:violation}

We conduct a simulation study to show that Assumptions \ref{asmp:rbe} and \ref{asmp:noninformative} are in some way necessary for the validity of the logrank test. 
In particular, we consider the violation of Assumptions \ref{asmp:rbe} and \ref{asmp:noninformative} by allowing the distribution of the treatment assignment and potential censoring times to vary across units. 
Similar to Section \ref{sec:simu_logrank}, we require that the potential event times $(\bm{T}(1), \bm{T}(0))$, the treatment assignment $\bm{Z}$, and the potential censoring times $(\bm{C}(1), \bm{C}(0))$ are mutually independent. 
The potential event times are generated from \eqref{eq:model_potential_outcome}, with $\rho=0.5$ and $\theta = 1$ (i.e., case 4 in Table \ref{tab:simulation}). 
Thus, the potential event times for all units are dependent and heterogeneous, where the latter is introduced by the fixed covariate vector $\bm{X}$ in \eqref{eq:model_potential_outcome}. 
\begin{table}[htb]
    \centering
	\caption{Models for generating the treatment assignments and potential censoring times}\label{tab:violation}
	\begin{tabular}{ccccc}
		\toprule
		Case & $I_z$ & $I_c$  & Heterogeneous assignment & Heterogeneous censoring\\
		\midrule
		i & $0$ & $0$ & No & No\\
		ii & $0$ & $1$ & No & Yes\\
		iii & $1$ & $0$ & Yes & No\\
		iv & $1$ & $1$ & Yes & Yes\\
		\bottomrule
	\end{tabular}
\end{table}
For $1\le i \le n$, 
the treatment assignments $Z_i$'s and the potential censoring times $(C_i(1), C_i(0))$'s are independent samples from the following models:
\begin{align}\label{eq:assign_heter}
	Z_i \sim \Bernoulli\left( 0.5 + I_z \cdot 0.2  \cdot (1-2X) \right), %
\end{align}
and
\begin{align}\label{eq:censor_heter}
	C_i(1) \sim 10^{1+I_c (X_i-1)} \cdot \exponential(1), 
	\ \ 
	C_i(0) \sim 10^{I_c (X_i-1)} \cdot \exponential(1), 
\end{align}
where $C_i(1) \ind C_i(0)$, and $I_z, I_c \in \{0,1\}$ are binary indicators for whether the treatment assignment and censoring mechanisms are heterogeneous across units.  
In particular, we consider four cases for generating the treatment assignments and potential censoring times from \eqref{eq:assign_heter} and \eqref{eq:censor_heter}, with values of $I_{z}$ and  $I_{c}$ listed in Table \ref{tab:violation}. 
These correspond to cases with homogeneous or heterogeneous treatment assignment or censoring mechanisms.

\begin{figure}[htbp]
	\centering
	\begin{subfigure}{.4\textwidth}
		\centering
		\includegraphics[width=1\linewidth]{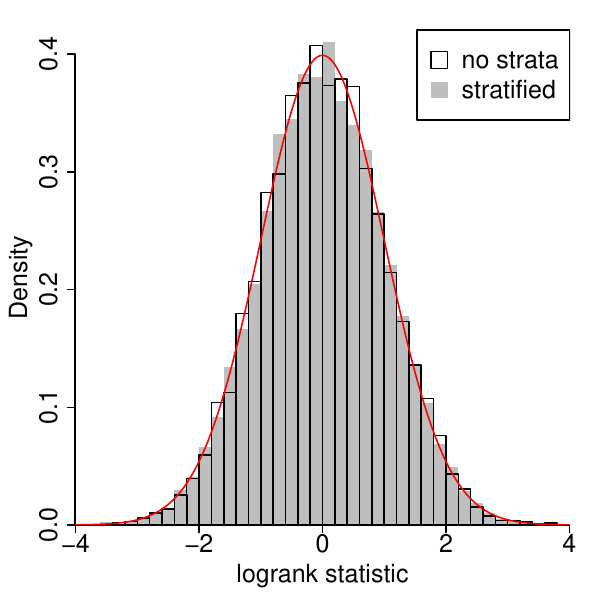}
		\caption*{case i}
	\end{subfigure}%
	\begin{subfigure}{.4\textwidth}
		\centering
		\includegraphics[width=1\linewidth]{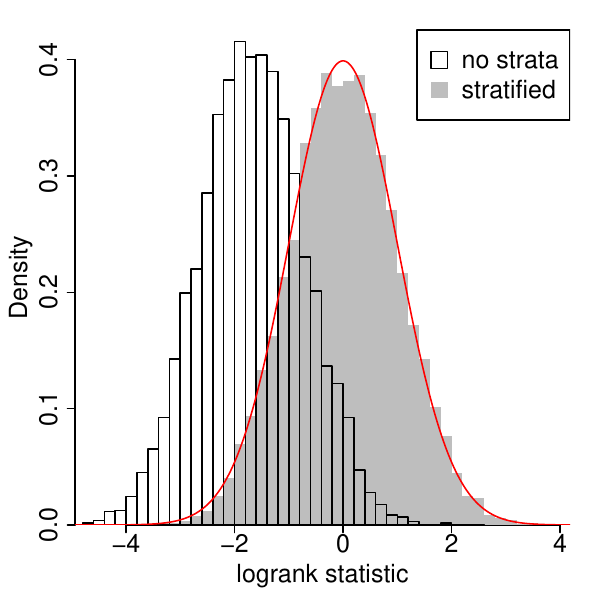}
		\caption*{case ii}
	\end{subfigure}
	\begin{subfigure}{.4\textwidth}
		\centering
		\includegraphics[width=1\linewidth]{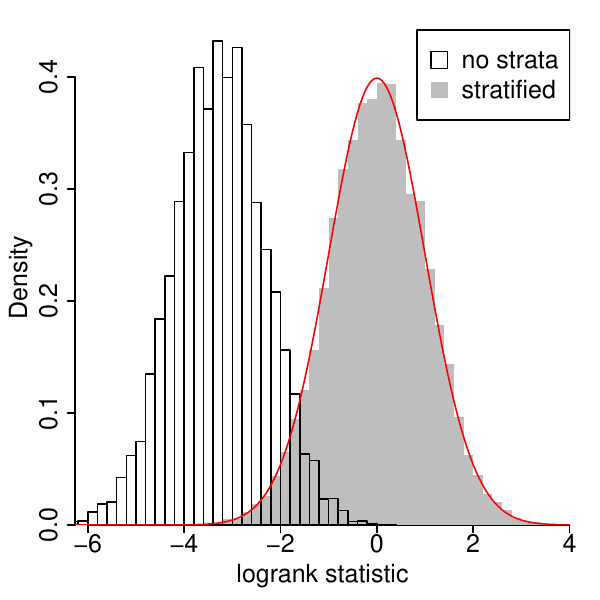}
		\caption*{case iii}
	\end{subfigure}%
	\begin{subfigure}{.4\textwidth}
		\centering
		\includegraphics[width=1\linewidth]{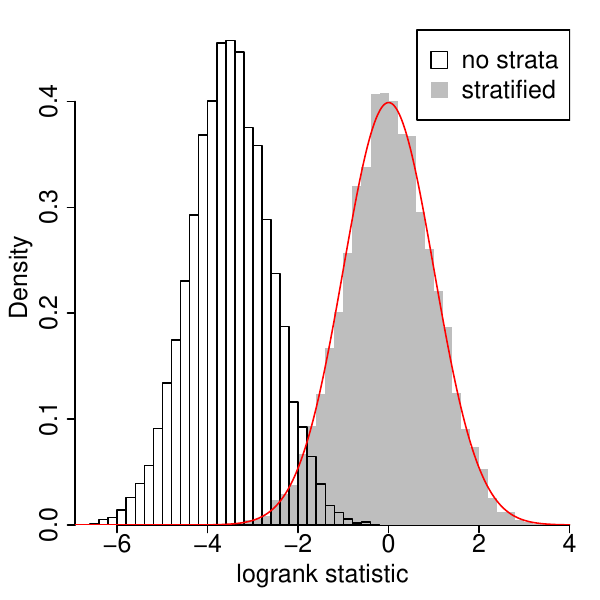}
		\caption*{case iv}
	\end{subfigure}
	\caption{
		Histograms of the logrank and stratified logrank statistics under homogeneous/heterogeneous treatment assignment and censoring mechanisms as in Table \ref{tab:violation}. 
		The white histograms denote the distributions of the logrank statistic, 
		and the gray histograms denote the distributions of the stratified logrank statistic. 
		The red lines denote the density of a standard Gaussian distribution.
	}\label{fig:logrank_violation}
\end{figure}
First, we study the distribution of the logrank statistic under the four cases in Table \ref{tab:violation} for generating the treatment assignments and potential censoring times. 
To mimic finite population inference, 
we simulate the fixed potential event times as in \eqref{eq:fix_potential_event_time}, 
and the resulting distribution for the logrank statistic will be its conditional distribution given $(\bm{T}(1), \bm{T}(0))$ fixed at \eqref{eq:fix_potential_event_time}. 
The white histograms in Figure \ref{fig:logrank_violation} 
show the distributions of  
the logrank statistic under the four cases with or without heterogeneity in the treatment assignment or censoring mechanisms. 
From Figure \ref{fig:logrank_violation}, 
when there exists heterogeneity in treatment assignment or censoring mechanisms, 
i.e., Assumptions \ref{asmp:rbe} or \ref{asmp:noninformative} fail, 
the logrank statistic is no longer close to be standard Gaussian distributed. 
We also conduct simulations with random potential event times from \eqref{eq:model_potential_outcome}, and the corresponding histograms under the four cases are similar to that with fixed potential event times, as shown in Figure A4 in the Supplementary Material. 
In simple words, even with randomly generated potential event times,  heterogeneity in the treatment assignment or censoring mechanisms still invalidates the logrank test.

Second, we study the distribution of the stratified logrank test under the previous simulation setting, where the stratified logrank statistic will be introduced and discussed shortly in the next section. 
In particular, we stratify on the covariates $X_i$'s in \eqref{eq:model_potential_outcome}. 
The gray histograms in Figure \ref{fig:logrank_violation} show the distributions of the stratified logrank statistic under the four cases in Table \ref{tab:violation}. 
From Figure \ref{fig:logrank_violation}, 
they 
are all close to a standard Gaussian distribution, indicating that the stratified logrank test is approximately valid. 
This phenomena is also true when the potential event times are randomly generated, as shown in Figure A4 in the Supplementary Material. 
Comparing the white and gray histograms in Figure \ref{fig:logrank_violation}, 
stratifying on the covariates $X_i$'s is crucial to ensure the validity of the logrank test. 
Intuitively, 
this is because, 
within each stratum defined by the covariates $X_i$'s, 
both the treatment assignment and censoring mechanisms become homogeneous across units.  %
In the next section, we will give a more detailed discussion for the stratified logrank test,  and provide rigorous conditions to ensure its asymptotic validity.

\section{Extension to Stratified Logrank Test}\label{sec:strata}
We 
provide a solution to possible violation of Assumptions \ref{asmp:rbe} or \ref{asmp:noninformative}, namely the stratified logrank test \citep{mantelhaenszel1959}, under the scenario where a categorical pretreatment covariate can fully adjust the inhomogeneity of treatment assignment and censoring mechanisms across units.

In usual superpopulation inference, 
the
stratified logrank test is invoked when we want to 
adjust for some covariates that 
may
be related to 
or can help explain heterogeneity in 
the event times. 
In finite population inference with i.i.d.\ treatment assignment and censoring (i.e., Assumptions \ref{asmp:rbe} and \ref{asmp:noninformative} hold), 
as discussed in Sections \ref{sec:rand_dist_logrank} and \ref{sec:asym_dist},
the potential event times are allowed to be any constants and can thus have arbitrary heterogeneity across units. 
Therefore, we 
are able to conduct valid inference using the original logrank test even if we ignore some covariates related to the event times. 
However, 
as demonstrated by simulation in Section \ref{sec:violation}, 
the stratified logrank test can still be useful 
in finite population inference 
if the distributions of treatment assignment and potential censoring times depend on some pretreatment covariates. 
For example, the treatment assignment may come from a randomized block design with varying probabilities of receiving active treatment across blocks as in \eqref{eq:assign_heter}, and the censoring pattern may differ across units with different covariates as in \eqref{eq:censor_heter}.

Below we first introduce the stratified logrank statistic.  
Suppose that the units are divided into $S$ strata. %
For units in each stratum $s \in \{1, 2, \ldots,S\}$, 
similar to Section \ref{sec:mds}, 
let $L_{[s]}$ be the overall observed-expected difference, and $U_{[s]}$ be the corresponding overall variance. 
We can then represent the (standardized) stratified logrank statistic as 
\begin{align}\label{eq:SLR}
\SLR \equiv \left( \sum_{s=1}^{S} U_{[s]} \right)^{-1/2} \cdot \sum_{s=1}^{S} L_{[s]}.
\end{align}
When 
the treatment assignments and potential censoring times are independent across strata, 
and 
Assumptions \ref{asmp:rbe} and \ref{asmp:noninformative} hold within each stratum, 
the observed-expected differences $L_{[s]}$'s, as well as the variances $U_{[s]}$'s,  are mutually independent for all $1\le s \le S$, 
and the  means and variances for the randomization distributions of the $L_{[s]}$'s can be similarly derived as in Theorem \ref{thm:mean_var_L}. 
These immediately imply the first two moments of the randomization distribution of the total observed-expected difference $\sum_{s=1}^{S} L_{[s]}$. 
Specifically, 
conditioning on all the potential event times, 
$\sum_{s=1}^{S} L_{[s]}$ has mean zero, and 
$ \sum_{s=1}^{S} U_{[s]}$ is an unbiased variance estimator for it. 
Therefore, 
$\SLR$ is essentially a standardization of the total observed-expected difference.  
To establish the 
asymptotic Gaussianity of 
the stratified logrank statistic,  
we introduce the following regularity condition. %

\begin{condition}\label{cond:strata}
	Along the sequence of finite populations satisfying $H_0$ in \eqref{eq:H0}, 
	as $n\rightarrow \infty$, the number of strata $S$ is fixed, and for each stratum $1\le s \le S$, 
	\begin{itemize}
		\item[(i)] the number of units in stratum $s$ goes to infinity,
		\item[(ii)] Condition \ref{cond:fp_conti} or \ref{cond:fp_discrete} holds for units within stratum $s$. 
	\end{itemize}
\end{condition}

In Condition \ref{cond:strata}, 
the number of units within each stratum increases with the total sample size $n$, but 
the proportion of units within each stratum does not need to have a limit. 
From the discussion for Conditions \ref{cond:fp_conti} and \ref{cond:fp_discrete} 
in Section \ref{sec:asym_dist}, 
when units within each stratum are i.i.d.\ samples from some superpopulation, 
some weak conditions on the superpopulation can ensure that Condition \ref{cond:strata} holds with probability one. 
The following theorem shows that Condition \ref{cond:strata} is sufficient for the 
asymptotic standard Gaussianity
of the stratified logrank statistic $\SLR$.  

\begin{theorem}\label{thm:CLT_SLR}
	Under the null hypothesis 
	$H_0$ in \eqref{eq:H0} and conditioning on all the potential event times, 
	if (a) the treatment assignments and potential censoring times, $(Z_i, C_i(1), C_i(0))$'s, are mutually independent across all units,
	(b) Assumptions \ref{asmp:rbe} and \ref{asmp:noninformative} hold within each stratum, and 
	(c) Condition \ref{cond:strata} holds, 
	then the stratified logrank statistic \eqref{eq:SLR} is asymptotically standard Gaussian, i.e., 
	$$
	\SLR \mid \bm{T}(1), \bm{T}(0) \converged \mathcal{N}(0,1). 
	$$
\end{theorem}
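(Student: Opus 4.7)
The plan is to reduce Theorem \ref{thm:CLT_SLR} to the single-stratum central limit theorems already established (Theorems \ref{thm:CLT_conti} and \ref{thm:CLT_discrete}) and then aggregate across strata via the conditional independence of the stratum-level statistics. The key structural observation is that, since $(Z_i, C_i(1), C_i(0))$ are mutually independent across all units and the strata partition the units, the pairs $(L_{[1]}, U_{[1]}), \ldots, (L_{[S]}, U_{[S]})$ are mutually independent conditional on $\bm{T}(1), \bm{T}(0)$. Because $S$ is fixed, this reduces the problem to combining finitely many independent one-stratum CLTs.

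For each stratum $s$, let $\sigma_{[s]}^2 \equiv \Var\{L_{[s]} \mid \bm{T}(1), \bm{T}(0)\}$. Condition \ref{cond:strata} supplies Condition \ref{cond:fp_conti} or \ref{cond:fp_discrete} within stratum $s$, and Assumptions \ref{asmp:rbe} and \ref{asmp:noninformative} hold there as well. Applying Theorem \ref{thm:CLT_conti} or \ref{thm:CLT_discrete} within each stratum I obtain
\begin{align*}
\frac{L_{[s]}}{\sigma_{[s]}} \, \Big| \, \bm{T}(1), \bm{T}(0) \converged \mathcal{N}(0,1), \qquad \frac{U_{[s]}}{\sigma_{[s]}^2} \, \Big| \, \bm{T}(1), \bm{T}(0) \convergep 1.
\end{align*}
By the stratum-level independence noted above, these marginal convergences upgrade to joint convergence: $(L_{[1]}/\sigma_{[1]}, \ldots, L_{[S]}/\sigma_{[S]}) \converged (Z_1, \ldots, Z_S)$ with $Z_s$ i.i.d.\ $\mathcal{N}(0,1)$, and $U_{[s]}/\sigma_{[s]}^2 \convergep 1$ for every $s$ simultaneously.

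Next I would decompose the stratified statistic as a weighted sum,
\begin{align*}
\SLR = \sqrt{\frac{\sum_{s=1}^S \sigma_{[s]}^2}{\sum_{s=1}^S U_{[s]}}} \cdot \sum_{s=1}^S \tau_{[s]} \frac{L_{[s]}}{\sigma_{[s]}}, \qquad \tau_{[s]} \equiv \frac{\sigma_{[s]}}{\sqrt{\sum_{s'=1}^S \sigma_{[s']}^2}}.
\end{align*}
The first factor tends to $1$ in probability because $\bigl|\sum_s U_{[s]}/\sum_s \sigma_{[s]}^2 - 1\bigr| \le \max_s |U_{[s]}/\sigma_{[s]}^2 - 1|$ and $S$ is fixed. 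The weights $\tau_{[s]}$ are deterministic functions of $\bm{T}(1), \bm{T}(0)$, non-negative, and satisfy $\sum_s \tau_{[s]}^2 = 1$.

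The main obstacle is that the weight vector $(\tau_{[1]}, \ldots, \tau_{[S]})$ need not converge along the sequence of finite populations, since a stratum's contribution to the overall variance can oscillate as $n$ grows. I handle this by a subsubsequence argument: every subsequence admits a further subsequence along which $\tau_{[s]} \to \tau^*_{[s]}$ for every $s$, which is possible because the weights lie in the compact set $\{w \in [0,1]^S : \sum_s w_s^2 = 1\}$. Along any such subsubsequence, the joint convergence to independent standard Gaussians combined with the continuous mapping theorem gives $\sum_s \tau_{[s]} L_{[s]}/\sigma_{[s]} \converged \sum_s \tau^*_{[s]} Z_s \sim \mathcal{N}(0,1)$. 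Since every subsequence has a further subsequence with the same $\mathcal{N}(0,1)$ limit, the full sequence converges, and Slutsky's theorem with the first factor then yields $\SLR \mid \bm{T}(1), \bm{T}(0) \converged \mathcal{N}(0,1)$.
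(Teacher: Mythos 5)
Your proposal is correct, and its overall architecture matches the paper's proof exactly: independence of the stratum-level pairs $(L_{[s]}, U_{[s]})$ from assumption (a), the single-stratum conclusions $L_{[s]}/\sigma_{[s]} \converged \mathcal{N}(0,1)$ and $U_{[s]}/\sigma_{[s]}^2 \convergep 1$ from Theorems \ref{thm:CLT_conti} and \ref{thm:CLT_discrete} (note $\sigma_{[s]}^2 = \E(U_{[s]}\mid \bm{T}(1),\bm{T}(0))$ by Theorem \ref{thm:mean_var_L}), consistency of the pooled variance estimator via a convex-combination bound (the paper's Lemma \ref{lemma:sum_converge_in_prob}), and Slutsky. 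The one place where you genuinely diverge is the key difficulty you correctly identify --- the weights $\tau_{[s]}$ square-sum to one but need not converge along the sequence of finite populations. The paper's Lemma \ref{lemma:sum_indep_normal_varying_coef} handles this by a quantile-coupling (almost-sure representation) argument: writing $X_{ns} = F_{ns}^{-1}(U_s)$ with i.i.d.\ uniforms, so that $\sum_s a_{ns}\Phi^{-1}(U_s)$ is exactly $\mathcal{N}(0,1)$ for every $n$ and the discrepancy vanishes almost surely. You instead use compactness of $\{w\in[0,1]^S: \sum_s w_s^2=1\}$ and the subsubsequence characterization of convergence in distribution, combined with joint convergence of the independent coordinates. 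Both devices resolve the same obstacle; yours is arguably the more standard textbook route and avoids the coupling construction, while the paper's gives a reusable standalone lemma and a direct (subsequence-free) statement. Your argument is complete provided you note, as you implicitly do, that along a subsubsequence the perturbation $\sum_s(\tau_{[s]}-\tau^*_{[s]})L_{[s]}/\sigma_{[s]}$ is $o_\pr(1)$ by tightness, and that $\sum_s (\tau^*_{[s]})^2=1$ is inherited in the limit so the Gaussian limit law is the same along every subsubsequence.
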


In Theorem \ref{thm:CLT_SLR}, 
we only require Assumptions \ref{asmp:rbe} and \ref{asmp:noninformative} to hold within each stratum, and thus allow the probability of receiving the active treatment and the joint distribution of the potential censoring times under treatment and control to vary across strata. 
Thus, 
Theorem \ref{thm:CLT_SLR} is particularly useful for randomized block designs and %
when the censoring mechanism varies across strata, as demonstrated by simulation in Section \ref{sec:violation}.  
Moreover, 
similar to
the discussion at the end of Section \ref{sec:two_mechanism}, 
we can relax condition (b) in Theorem \ref{thm:CLT_SLR} by 
instead requiring 
$(Z_i, C_i(1),C_i(0))$'s to be independent of the potential event times and i.i.d.\
within each stratum, 
allowing dependence between the treatment assignment and the potential censoring times. 

The choice of strata is an important issue in practice. To ensure the validity of the stratified logrank test, we need to consider the covariates that affect either the treatment assignment or potential censoring times. 
However, 
the stratified logrank test can only take into account discrete covariates. 
It will be interesting to extend the randomization-based logrank test to adjust for more general (continuous) covariates, 
possibly
utilizing ideas from inverse 
probability 
weighting \citep{Horvitz1952, RUBIN1983, robins2020}, Cox regression models \citep{Cox1972} and regression adjustment for randomized experiments without censoring \citep{lin2013, Basse2021}.

\section{Conclusion and Discussion}\label{sec:conclusion}

We studied the randomization distribution of the logrank statistic when the treatment assignment and potential censoring times are i.i.d.\ across all units, 
and proved its asymptotic standard Gaussianity under certain regularity conditions. 
The theory developed here does not require any distributional assumption on the potential event times. 
It supplements the classical theory under usual superpopulation inference, and helps 
provide a broader justification for 
the logrank test. 
We invoked the stratified logrank test to overcome the inhomogeneity in treatment assignment and censoring mechanisms among units, 
and 
studied its randomization distribution as well as the corresponding asymptotic approximation. 

In this paper we focus mainly on the logrank test. 
It will be interesting to extend the discussion to other variants or generalizations of the logrank test.  
For example, researchers have considered weighted logrank tests \citep[see, e.g.,][]{Harrington1991}, 
and recently \citet{kernel_logrank2021} considered a kernel logrank test, where the test statistic corresponds to the supremum of a potentially infinite collection of weighted logrank statistics with weighting functions belonging to a reproducing kernel Hilbert space.

\section*{Acknowledgements}
We thank the Associate Editor and two reviewers for constructive comments.

\bibliographystyle{plainnat}
\bibliography{causal}

\newpage
\setcounter{equation}{0}
\setcounter{section}{0}
\setcounter{figure}{0}
\setcounter{example}{0}
\setcounter{proposition}{0}
\setcounter{corollary}{0}
\setcounter{theorem}{0}
\setcounter{table}{0}
\setcounter{condition}{0}
\setcounter{assumption}{0}

\renewcommand {\theproposition} {A\arabic{proposition}}
\renewcommand {\theexample} {A\arabic{example}}
\renewcommand {\thefigure} {A\arabic{figure}}
\renewcommand {\thetable} {A\arabic{table}}
\renewcommand {\theequation} {A\arabic{equation}}
\renewcommand {\thelemma} {A\arabic{lemma}}
\renewcommand {\thesection} {A\arabic{section}}
\renewcommand {\thetheorem} {A\arabic{theorem}}
\renewcommand {\thecorollary} {A\arabic{corollary}}
\renewcommand {\thecondition} {A\arabic{condition}}
\renewcommand {\theassumption} {A\arabic{assumption}}

\renewcommand {\thepage} {A\arabic{page}}
\setcounter{page}{1}

\begin{center}
	\bf \LARGE 
	Supplementary Material 
\end{center}

Appendix \ref{sec:slight_relax} discusses the relaxation of Assumptions \ref{asmp:rbe} and \ref{asmp:noninformative}, by allowing dependence between the treatment assignment and potential censoring times. 

Appendix \ref{sec:summary} summarizes and compares justifications for the logrank test from both superpopulation and finite population inferences.

Appendix \ref{app:simulation} provides additional simulation results for the main text.

Appendix \ref{app:exact} provides the proofs for the exact randomization distribution of the logrank statistic. 

Appendix \ref{app:asym_infinite} provides the proof for the asymptotic standard Gaussianity of the logrank statistic when the number of potentially distinct event times goes to infinity as the sample size goes to infinity. 

Appendix \ref{app:asymp_finite} provides the proof for the asymptotic standard Gaussianity of the logrank statistic when the number of potentially distinct event times is bounded as the sample size increases. 

Appendix \ref{app:logrank_strata} provides the proof for the asymptotic standard Gaussianity of the stratified logrank statistic.

Throughout the proofs of all lemmas and theorems from Appendix \ref{app:exact}--\ref{app:logrank_strata}, 
we are all conducting finite population inference by conditioning on all the potential event times $\bm{T}(1)$ and $\bm{T}(0)$. 
For clarification, in the statement of the lemmas, we write such conditioning on $(\bm{T}(1), \bm{T}(0))$ explicitly. 
For descriptive convenience, in the proofs of the lemmas and the theorems, 
we suppress the conditioning on $(\bm{T}(1), \bm{T}(0))$, 
assume the potential event times are always conditioned on implicitly, 
and thus equivalently view the $T_i(1)$'s and $T_i(0)$'s as fixed constants.

\section{Relaxation of Assumptions 1 and 2}\label{sec:slight_relax}

If we relax the %
independence between 
the treatment assignment and potential censoring times %
as in Assumptions \ref{asmp:rbe} and \ref{asmp:noninformative}, 
the results on finite population inference in Sections \ref{sec:rand_dist_logrank} and \ref{sec:asym_dist}  
still hold
except for a slight modification on the definition of $G_z(\cdot)$ for $z=0,1$.  
Specifically, we can replace Assumptions \ref{asmp:rbe} and \ref{asmp:noninformative} by the following assumption on both the treatment assignment and censoring mechanisms. 

\begin{assumption}\label{asmp:noninfor_both}
	The treatment assignments and potential censoring times are independent of the potential event times, in the sense that  
	$(\bm{Z}, \bm{C}(1), \bm{C}(0))$ $\ind 	(\bm{T}(1), \bm{T}(0))$, 
	and the $(Z_i, C_i(1), C_i(0))$'s are i.i.d.\ across all units. 
\end{assumption}

Assumption \ref{asmp:noninfor_both} is more general than Assumptions \ref{asmp:rbe} and  \ref{asmp:noninformative}. It allows the treatment assignment to depend on the potential censoring times. 
For the randomization distribution of the logrank statistic, 
all the conclusions established under Assumptions \ref{asmp:rbe} and \ref{asmp:noninformative} also hold 
under Assumption \ref{asmp:noninfor_both}, 
with the survival functions $G_z(\cdot)$'s 
changed to 
$\overline{G}_z(\cdot)$'s, 
where $\overline{G}_z(c) \equiv \pr(C_i(z)\ge c \mid Z_i=z)$ for any $c\in \mathbb{R}$ and $z=0,1$. 

Below 
we explain why the theory developed in Sections  \ref{sec:rand_dist_logrank} and \ref{sec:asym_dist} is still true when Assumption \ref{asmp:noninfor_both} holds 
instead of Assumptions \ref{asmp:rbe} and \ref{asmp:noninformative}. 
For each unit $i$, we introduce two independent random variables $\overline{C}_i(1)$ and $\overline{C}_i(0)$ for a pair of pseudo potential censoring times that follow the distributions of $C_i(1)$ given $Z_i=1$ and $C_i(0)$ given $Z_i=0$, respectively, 
i.e., 
$$
\overline{C}_i(1) \sim C_i(1) \mid Z_i=1, 
\quad 
\overline{C}_i(0) \sim C_i(0) \mid Z_i=0, 
\quad
\overline{C}_i(1) \ind \overline{C}_i(0). 
$$
We further assume the pseudo potential censoring times 
$(\overline{C}_i(1), \overline{C}_i(0))$'s are i.i.d.\  across all units, and they are independent of the treatment assignments $Z_i$'s. 
By construction, 
if the units' potential censoring times were replaced by the pseudo potential censoring times $(\overline{C}_i(1), \overline{C}_i(0))$'s, Assumptions \ref{asmp:rbe} and \ref{asmp:noninformative} would hold and all the results established in Sections \ref{sec:rand_dist_logrank} and  \ref{sec:asym_dist}, including the central limit theorems for the logrank statistics, would be true. 
Note that, by construction,
for each unit $i$,  
conditional on $Z_i$, 
the realized pseudo censoring time $\overline{C}_i \equiv Z_i \overline{C}_i(1) + (1-Z_i) \overline{C}_i(0)$ follows the same distribution as the realized censoring time $C_i$, and consequently, 
$$
(\overline{C}_1, \overline{C}_2, \ldots, \overline{C}_n) \mid \bm{Z} \ \sim \ 
(C_1, C_2, \ldots, C_n) \mid \bm{Z}.
$$
This implies that, under Assumption \ref{asmp:noninfor_both} and conditioning on all the potential event times, 
the logrank statistic with the original potential censoring times $(C_i(1), C_i(0))$'s follows the same distribution as that with the pseudo potential censoring times $(\overline{C}_i(1), \overline{C}_i(0))$'s. 
Therefore, 
for the randomization distribution of the logrank statistic, 
all the conclusions established under Assumptions \ref{asmp:rbe} and \ref{asmp:noninformative} also hold 
under Assumption \ref{asmp:noninfor_both}, 
with the survival functions $G_z(\cdot)$'s for the potential censoring times changed to survival functions
$\overline{G}_z(\cdot)$'s for the pseudo potential censoring times. 
Specifically, $\overline{G}_z(c) \equiv \pr(\overline{C}_i(z)\ge c) = \pr(C_i(z)\ge c \mid Z_i=z)$, for any $c\in \mathbb{R}$ and $z=0,1$.

\section{Summary: broader justifications for the logrank test}\label{sec:summary}

From the discussion in Sections \ref{sec:review_logrank}--\ref{sec:asym_dist} and Appendix \ref{sec:slight_relax}, 
finite population inference can provide additional justification for the validity of the logrank test, relaxing the distributional assumptions on the potential event times and supplementing the classical theory from usual superpopulation inference. %
Below we summarize justifications for the logrank test from both the superpopulation inference \citep[see][]{Gill1980, Harrington1991} 
and finite population inference (see Sections \ref{sec:rand_dist_logrank} and \ref{sec:asym_dist} and Appendix \ref{sec:slight_relax}). 
We assume that the treatment assignments and potential censoring times are independent of the potential event times, i.e., 
$
(\bm{Z}, \bm{C}(1), \bm{C}(0)) \ind (\bm{T}(1), \bm{T}(0)). 
$
This assumption requires unconfounded treatment assignment and non-informative censoring, and is required for both the superpopulation and finite population inferences for the logrank test. 
Below we discuss additional assumptions needed for the superpopulation and finite population justifications, respectively. 

First, 
conditional on the treatment assignment vector $\bm{Z}$ and all the potential censoring times $\bm{C}(1)$ and $\bm{C}(0)$, 
the realized event times $T_i$'s and censoring times $C_i$'s follow the general random censorship model with fixed censorship \citep[][Page 55]{Gill1980}. 
From \citet{Gill1980} and \citet{Harrington1991}, 
if the potential event times $(T_i(1), T_i(0))$'s are i.i.d.\ across all units and the potential event times under treatment and control are identically distributed, i.e., $T_i(1) \sim T_i(0)$, 
then, with certain regularity conditions, the conditional distribution of the logrank statistic given $(\bm{Z}, \bm{C}(1), \bm{C}(0))$ is asymptotically standard Gaussian. 
Therefore, 
the superpopulation inference can justify the logrank test without any distributional assumption on the treatment assignments and potential censoring times, 
and it relies crucially on the randomness in the i.i.d.\ sampling of units from a %
superpopulation. 
Second, from the discussion in Sections \ref{sec:rand_dist_logrank} and \ref{sec:asym_dist} and Appendix \ref{sec:slight_relax}, 
if the treatment assignment and potential censoring times $(Z_i, C_i(1), C_i(0))$'s are i.i.d.\ across all units, 
then, with certain regularity conditions (e.g., Condition \ref{cond:fp_conti} or \ref{cond:fp_discrete}), 
the conditional distribution of the logrank statistic given $(\bm{T}(1), \bm{T}(0))$ is asymptotically standard Gaussian 
under Fisher's null $H_0$ in \eqref{eq:H0}. 
Therefore, finite population inference can justify the logrank test without any distributional assumption on the potential event times, 
and it relies crucially on the randomness in the treatment assignments and censoring times.
The top half of Table \ref{tab:justification} summarizes the assumptions needed for the validity of the logrank test, with justifications from either superpopulation or finite population inference. 
From Table \ref{tab:justification}, finite population inference 
provide additional justification for the logrank test, 
supplementing the results from the classical superpopulation inference. 
Combining both types of inference, they provide a broader justification for the logrank test.

\begin{table}
    \centering
	\caption{Justifications for the asymptotic standard Gaussianity of logrank statistic. 
		The potential event times are assumed to be independent of the treatment assignment and potential censoring times, i.e., 
		$
		(\bm{T}(1), \bm{T}(0)) \independent (\bm{Z}, \bm{C}(1), \bm{C}(0)). 
		$
		\textsf{NDA} stands for no distributional assumption. 
		The top two rows considers the justification for the logrank test in general case. The bottom two rows considers the justification for the logrank test in a special case for testing Fisher's null of no treatment effect on any unit under a Bernoulli randomized experiment. 
	}\label{tab:justification}
	\resizebox{\columnwidth}{!}{
		\begin{tabular}{cccc}
			\toprule
			& Potential event times & Treatment assignment \& & Justification for\\
			& & censoring mechanisms & logrank test \\[2pt]
			& $(\bm{T}(1), \bm{T}(0))$ &   $(\bm{Z}, \bm{C}(1), \bm{C}(0))$ & \\
			\midrule
		General case	& $(T_i(1), T_i(0))$'s are i.i.d., $T_i(1)\sim T_i(0)$  
			& 
			\textsf{NDA}
			& 
			superpopulation\\
			& $T_i(1) = T_i(0)$ for all $i$, \textsf{NDA}
			& 
			$(Z_i, C_i(1), C_i(0))$'s are i.i.d.
			& 
			finite population\\
			\midrule
			Special case & $T_i(1)=T_i(0)$'s are i.i.d.
			& 
			\textsf{NDA}
			& 
			superpopulation\\
			& \textsf{NDA}
			& 
			$(C_i(1), C_i(0))$'s are i.i.d.
			& 
			finite population\\
			\bottomrule
		\end{tabular}
	}
\end{table}

Below we consider a special case in which we want to test Fisher's null $H_0$ in \eqref{eq:H0} under a Bernoulli randomized experiment, i.e., Assumption \ref{asmp:rbe} holds. 
Note that here Assumption \ref{asmp:rbe} is justified by the physical randomization of the treatment assignment and it holds by the design of the experiment. 
In this special case, 
with certain regularity conditions, 
the logrank test is asymptotically valid when either the potential event times or potential censoring times are i.i.d.\ from a certain distribution, with justification from the superpopulation and finite population inferences, respectively.
The bottom half of Table \ref{tab:justification} summarizes the assumptions needed for the validity of the logrank test for testing Fisher's null under a Bernoulli randomized experiment. 
From Table \ref{tab:justification}, 
we can see that the logrank test has a double-robust property, in the sense that it is asymptotically valid when either the potential event times or the potential censoring times are i.i.d. 
However, 
these two i.i.d. assumptions on potential event and censoring times, although they look symmetric, are quite different in nature. 
Specifically, 
the potential event times are of interest, while the potential censoring times are generally not of interest and introduce difficulties in inference for potential event times. 

The appropriateness of the two i.i.d. assumptions can vary on a case-by-case basis. For potential event times it depends on
whether the enrolled experimental units are homogeneous in their reactions to the treatment and control; 
it can be more reasonable when the enrolled units are randomly selected from some population of interest. 
For potential censoring times it depends on
the censoring mechanism; 
in some cases, 
the experimenters are able to control the censoring mechanism and can thus make the assumption hold by homogeneous censoring for units under each treatment arm. 
For example, the experimenters can follow the units within the same treatment arm for random amounts of time drawn i.i.d.\ from some distribution, or even the same fixed amount of time. 
Also, in the case of administrative censoring in which units within the same treatment group are censored at the same time, the i.i.d.\ assumption on potential censoring times can hold if the time for entering the study is i.i.d.\ across all units.

As demonstrated by simulation in Section \ref{sec:violation}, 
under finite population inference, 
the logrank test can be invalid when the potential censoring times are not i.i.d. 
This indicates that, under finite population inference without any distributional assumption on the potential event times, 
the assumption of i.i.d.\ potential censoring times is in some way necessary for the validity of the logrank test; see also the discussion in Section \ref{sec:two_mechanism}.

\section{More simulation results}\label{app:simulation}

In this section, we provide additional simulation results for the two sample $t$-test, the logrank test and the stratified logrank test in Sections \ref{sec:review_t_test}, \ref{sec:simu_logrank} and \ref{sec:violation}, respectively. 

\subsection{A simulation study for the $t$-test under both superpopulation and finite population inferences}

In this subsection, we conduct a simulation study for the $t$-test from both the superpopulation and finite population perspective, in parallel with Section \ref{sec:simu_logrank}. 
We assume the potential event times $\bm{T}(1)$ and $\bm{T}(0)$ are generated from \eqref{eq:model_potential_outcome}, 
the treatment assignments are 
from a completely randomized experiment with half of the units receiving treatment and control, 
and all the potential censoring times are infinity, i.e., there is no censoring. 
Furthermore, 
the potential event times and the treatment assignments are mutually independent. 
 
\begin{figure}%
	\centering
	\begin{subfigure}{.4\textwidth}
		\centering
		\includegraphics[width=1\linewidth]{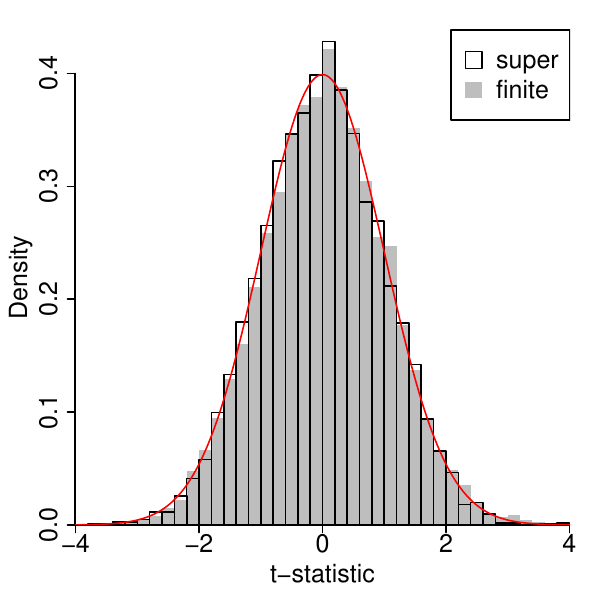}
		\caption*{case 1}
	\end{subfigure}%
	\begin{subfigure}{.4\textwidth}
		\centering
		\includegraphics[width=1\linewidth]{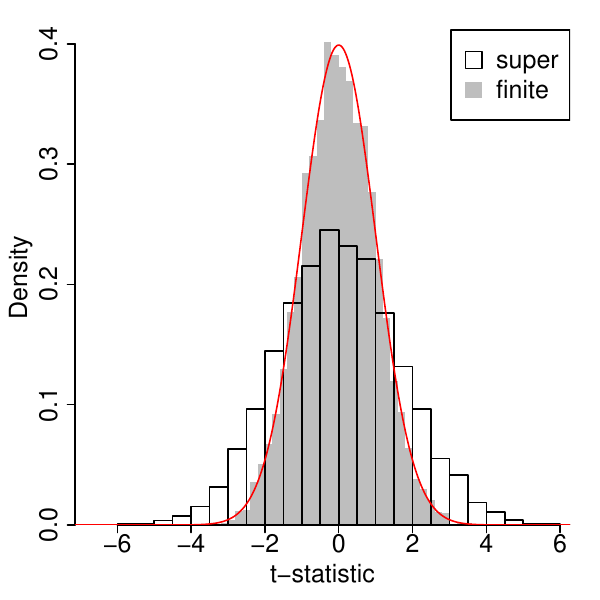}
		\caption*{case 2}
	\end{subfigure}
	\begin{subfigure}{.4\textwidth}
		\centering
		\includegraphics[width=1\linewidth]{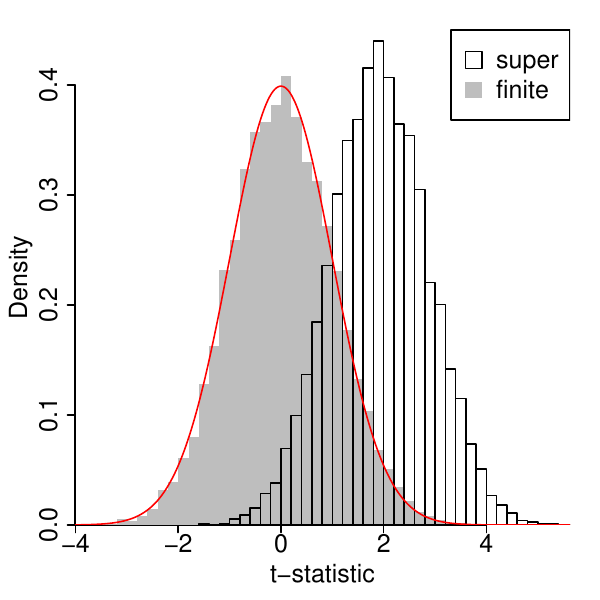}
		\caption*{case 3}
	\end{subfigure}%
	\begin{subfigure}{.4\textwidth}
		\centering
		\includegraphics[width=1\linewidth]{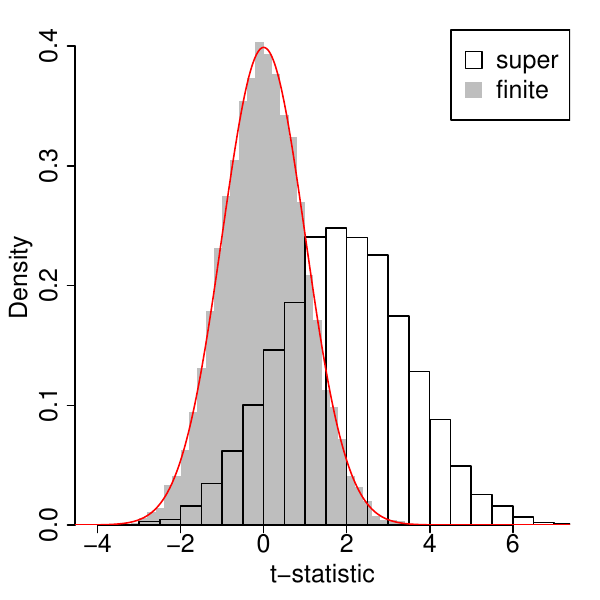}
		\caption*{case 4}
	\end{subfigure}
	\caption{
		Histograms of the $t$-statistic in the superpopulation and finite population settings with parameters values in the four cases of 
		Table \ref{tab:simulation}. 
		The white histograms denote the distributions of the $t$-statistic under superpopulation setting with random potential event times and fixed treatment assignments, 
		and the gray histograms denote the distributions of the $t$-statistic under finite population setting with fixed potential event times and random treatment assignments. 
		The %
		red lines denote the density of a standard Gaussian distribution.
	}\label{fig:t_test}
\end{figure}

\begin{figure}%
	\centering
	\begin{subfigure}{.4\textwidth}
		\centering
		\includegraphics[width=1\linewidth]{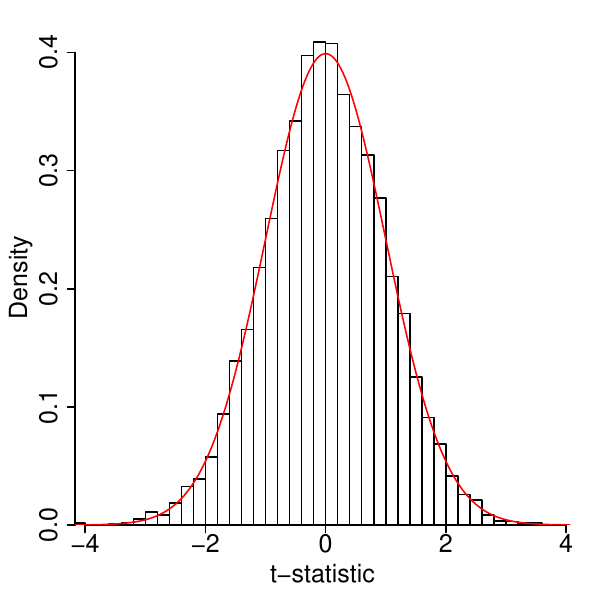}
		\caption*{case 1}
	\end{subfigure}%
	\begin{subfigure}{.4\textwidth}
		\centering
		\includegraphics[width=1\linewidth]{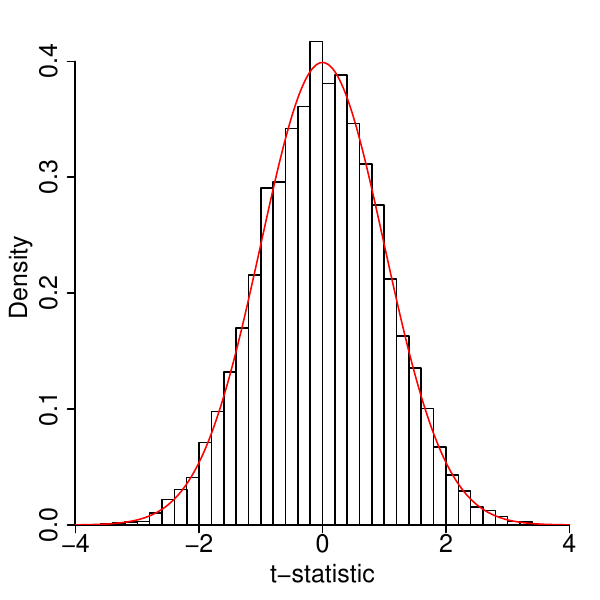}
		\caption*{case 2}
	\end{subfigure}
	\begin{subfigure}{.4\textwidth}
		\centering
		\includegraphics[width=1\linewidth]{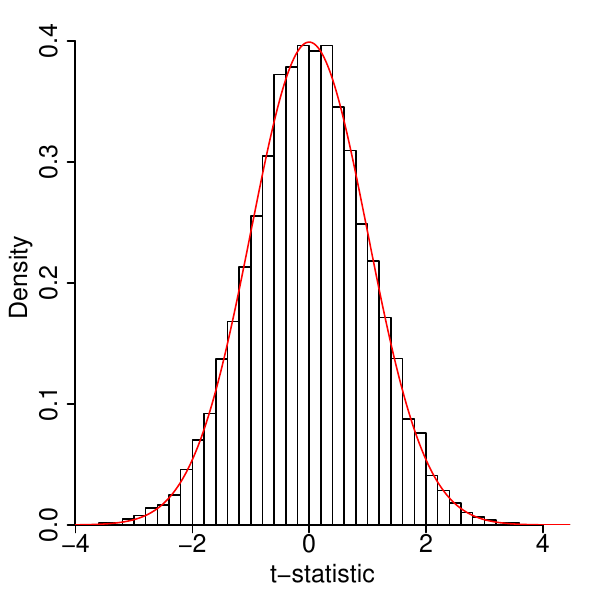}
		\caption*{case 3}
	\end{subfigure}%
	\begin{subfigure}{.4\textwidth}
		\centering
		\includegraphics[width=1\linewidth]{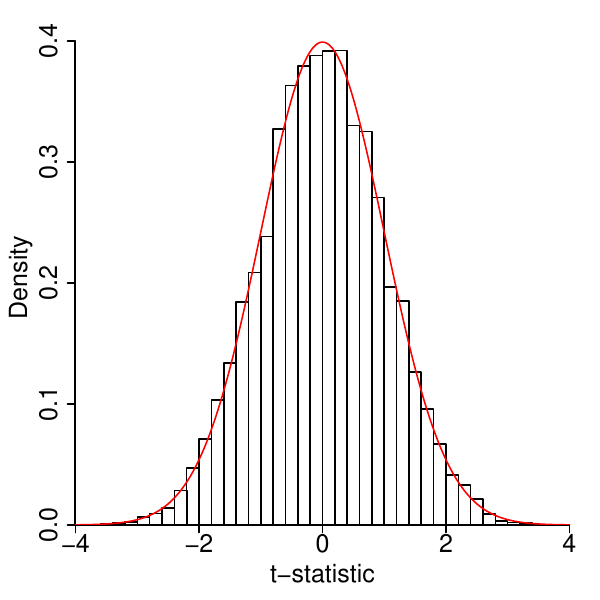}
		\caption*{case 4}
	\end{subfigure}
	\caption{
		Histograms of the $t$-statistic when both the potential outcomes and treatment assignment are randomly and independently generated from \eqref{eq:model_potential_outcome} and \eqref{eq:model_treatment}, with parameters values in the four cases of 
		Table \ref{tab:simulation}. 
		The red lines denote the density of a standard Gaussian distribution.
	}\label{fig:t_test_supp}
\end{figure}

First, we mimic superpopulation inference where the treatment assignment $\bm{Z}$ is being conditioned on.  
We fix the treatment assignment $\bm{Z}$ as in \eqref{eq:fix_treatment}, 
and generate the random potential event times $\bm{T}(1)$ and $\bm{T}(0)$ from  model \eqref{eq:model_potential_outcome}. 
The resulting distribution of the $t$-statistic 
is then 
the conditional distribution of $\TST$ given the treatment assignment $\bm{Z}$ in \eqref{eq:fix_treatment}. 
In Figure \ref{fig:t_test}, the four white histograms show the distributions of the $t$-statistic under the four cases in Table \ref{tab:simulation}. 
From Figure \ref{fig:t_test}, when there exist dependence or heterogeneity among the units' potential event times, 
the conditional distribution of the $t$-statistic $\TST$ given $\bm{Z}$ in \eqref{eq:fix_treatment} is no larger close to a standard Gaussian distribution.

Second, 
we mimic finite population inference where the potential event times are being conditioned on. 
We fix the potential event times $\bm{T}(1)$ and $\bm{T}(0)$ as in \eqref{eq:fix_potential_event_time}, 
and generate the random treatment assignment $\bm{Z}$ from the completely randomized experiment. 
The resulting distribution of the $t$-statistic 
is then 
the conditional distribution of $\TST$ given the potential event times $\bm{T}(1)=\bm{T}(0)$ in \eqref{eq:fix_potential_event_time}. 
In Figure \ref{fig:t_test}, the four gray histograms show the distributions of the $t$-statistic under the four cases in Table \ref{tab:simulation}. 
From Figure \ref{fig:t_test}, 
under all cases in Table \ref{tab:simulation} for generating the fixed potential event times in \eqref{eq:fix_potential_event_time}, the distributions of the $t$-statistic are all approximately standard Gaussian.

Third, we consider the setting where both the potential event times and the treatment assignment are random. 
Specifically, 
we generate the potential event times from model \eqref{eq:model_potential_outcome} and the treatment assignment from the completely randomized experiment, 
and they are mutually independent. 
Note that 
the distribution of the $t$-statistic here can be viewed as the conditional distribution 
of $\TST$ given $\bm{T}(1)$ and $\bm{T}(0)$ marginal over all potential event times. 
From the finite population simulation results, 
it is not surprising that the distributions of the $t$-statistic under all cases in Table \ref{tab:simulation} are close to a standard Gaussian distribution, as shown in Figure \ref{fig:t_test_supp}. 
Therefore, randomization justifies the usual two-sample $t$-test, and it does not require any distributional assumption on the potential event times.

\subsection{Additional simulation results for the logrank test}

\begin{figure}%
	\centering
	\begin{subfigure}{.4\textwidth}
		\centering
		\includegraphics[width=1\linewidth]{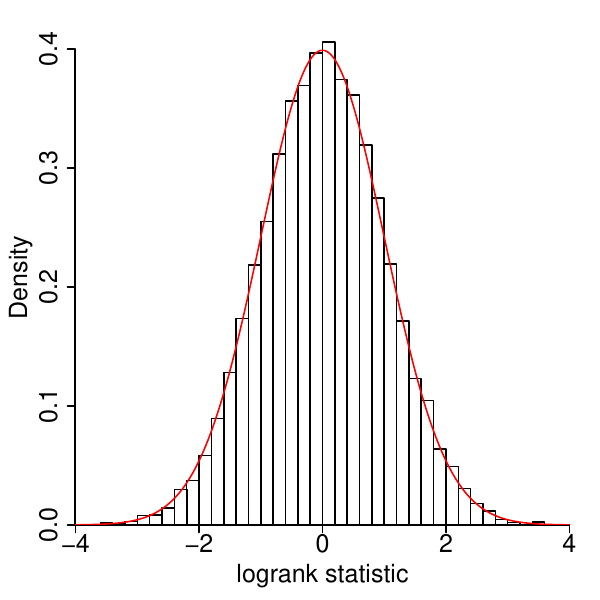}
		\caption*{case 1}
	\end{subfigure}%
	\begin{subfigure}{.4\textwidth}
		\centering
		\includegraphics[width=1\linewidth]{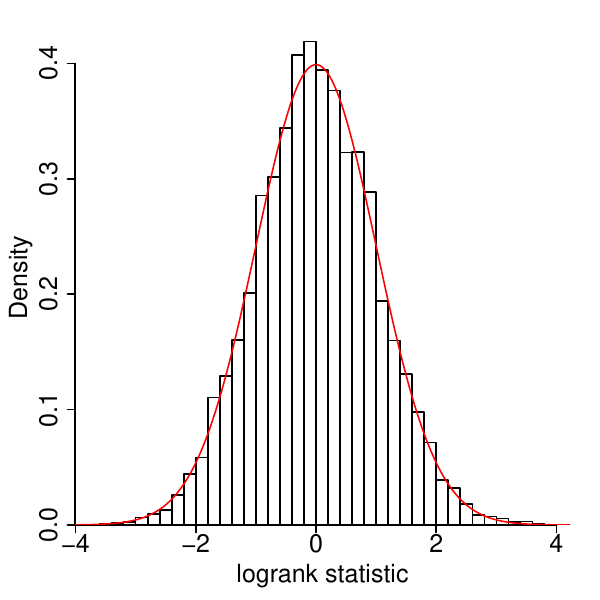}
		\caption*{case 2}
	\end{subfigure}
	\begin{subfigure}{.4\textwidth}
		\centering
		\includegraphics[width=1\linewidth]{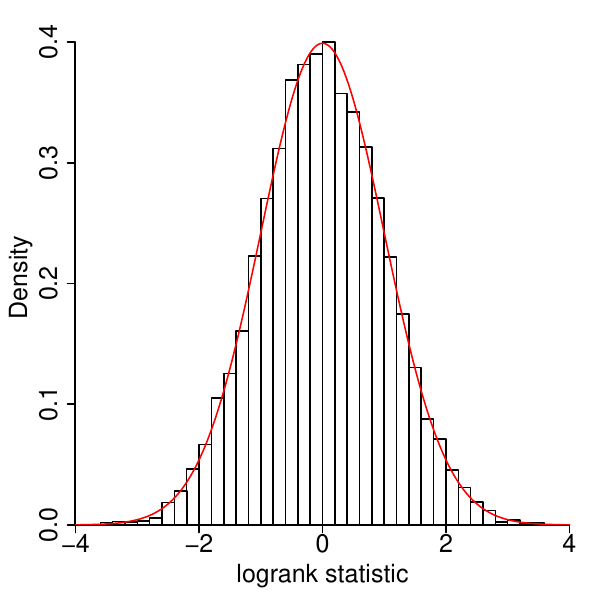}
		\caption*{case 3}
	\end{subfigure}%
	\begin{subfigure}{.4\textwidth}
		\centering
		\includegraphics[width=1\linewidth]{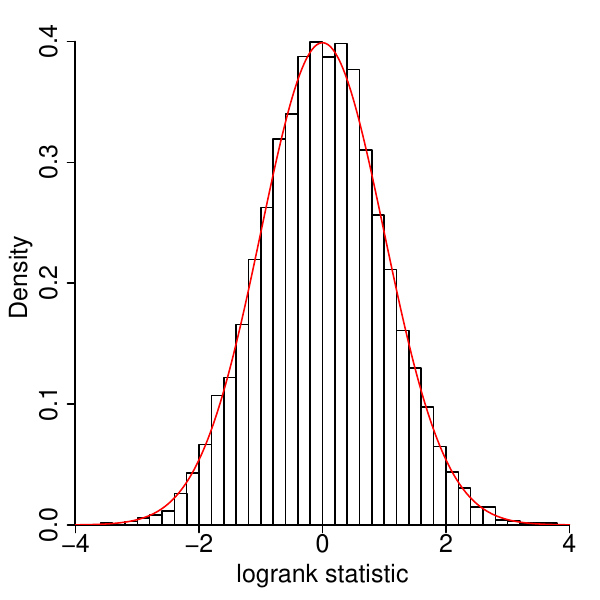}
		\caption*{case 4}
	\end{subfigure}
	\caption{
		Histograms of the logrank statistic 
		when the potential event times, potential censoring times and treatment assignment are mutually independent and randomly generated from 
		\eqref{eq:model_potential_outcome}, \eqref{eq:model_censor} and \eqref{eq:model_treatment}, respectively, with parameters values in the four cases of 
		Table \ref{tab:simulation}. 
		The red lines denote the density of a standard Gaussian distribution.
	}\label{fig:logrank_supp}
\end{figure}

\begin{figure}%
	\centering
	\begin{subfigure}{.4\textwidth}
		\centering
		\includegraphics[width=1\linewidth]{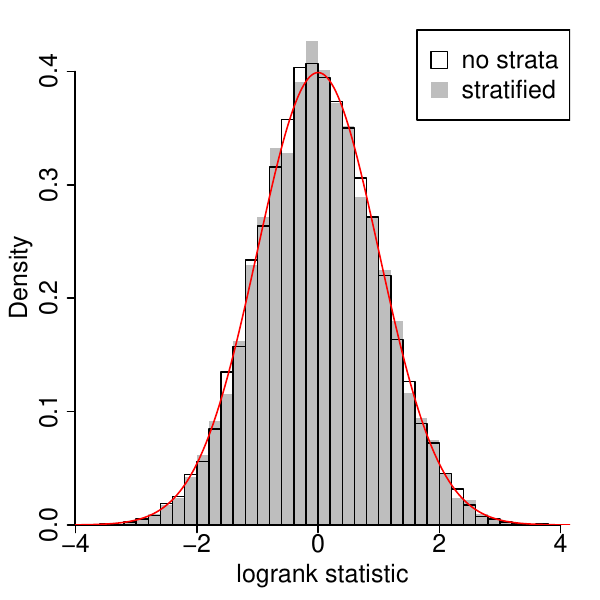}
		\caption*{case i}
	\end{subfigure}%
	\begin{subfigure}{.4\textwidth}
		\centering
		\includegraphics[width=1\linewidth]{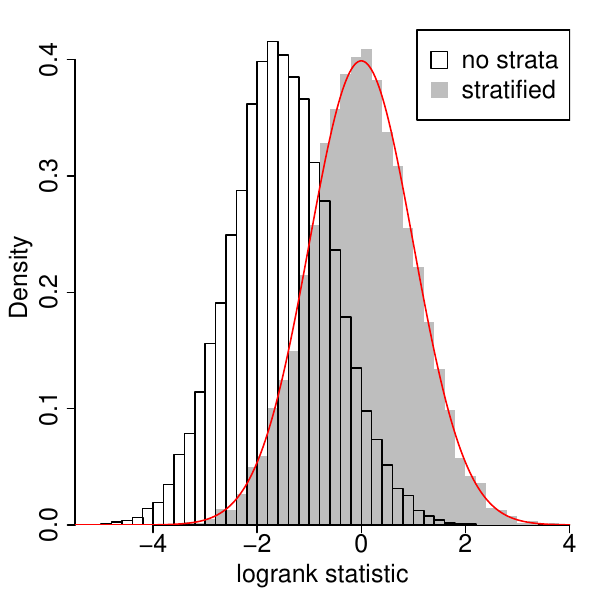}
		\caption*{case ii}
	\end{subfigure}
	\begin{subfigure}{.4\textwidth}
		\centering
		\includegraphics[width=1\linewidth]{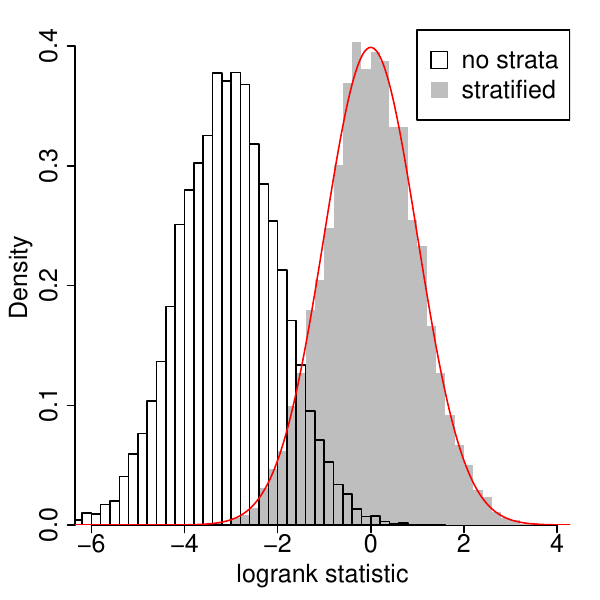}
		\caption*{case iii}
	\end{subfigure}%
	\begin{subfigure}{.4\textwidth}
		\centering
		\includegraphics[width=1\linewidth]{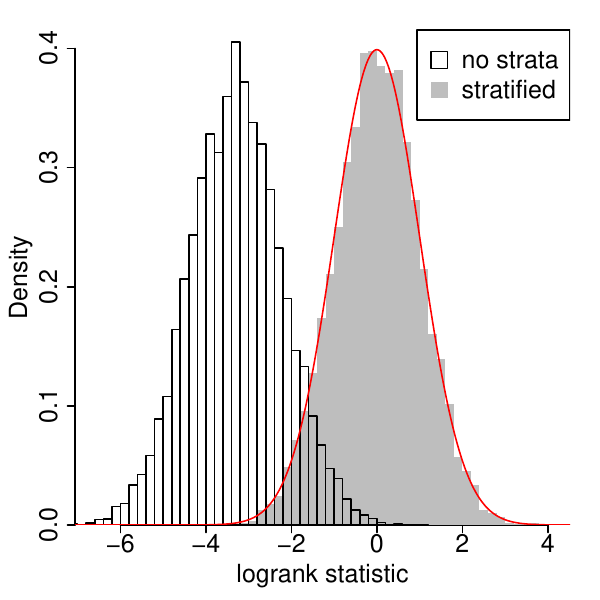}
		\caption*{case iv}
	\end{subfigure}
	\caption{
		Histograms of the logrank and stratified logrank statistics 
		under homogeneous/heterogeneous treatment assignment and censoring mechanisms as in Table \ref{tab:violation}, 
		with mutually independent potential event times, potential censoring times and treatment assignment randomly generated from \eqref{eq:model_potential_outcome}, \eqref{eq:censor_heter} and \eqref{eq:assign_heter}, respectively. 
		The white histograms denote the distributions of the logrank statistic, 
		and the gray histograms denote the distributions of the stratified logrank statistic. 
		The red lines denote the density of the standard Gaussian distribution.
	}\label{fig:logrank_violation_supp}
\end{figure}

Figure \ref{fig:logrank_supp} shows the histograms of the logrank statistic, when the potential event times $(\bm{T}(1), \bm{T}(0))$, potential censoring times $(\bm{C}(1), \bm{C}(0))$ and treatment assignment $\bm{Z}$ are mutually independent and randomly generated from 
\eqref{eq:model_potential_outcome}, \eqref{eq:model_censor} and \eqref{eq:model_treatment}, respectively. 
The parameters values for $\rho$ and $\theta$ are chosen as in the four cases in Table \ref{tab:simulation}. 
From Figure \ref{fig:logrank_supp}, 
the distributions of the logrank statistic under all four cases are close to the standard Gaussian distribution. 
This is not surprising because we can view the distribution of the logrank statistic with random potential event times and random treatment assignment as the marginalization of the randomization distribution in \eqref{eq:logrank_clt_finite_conti}, 
whose asymptotic standard Gaussianity is established in Section \ref{sec:asym_dist}  and is also demonstrated in Figure \ref{fig:logrank}.

Figure \ref{fig:logrank_violation_supp} shows the histograms of the original logrank statistic and the stratified logrank statistic based on the covariates $\bm{X}$ in \eqref{eq:model_potential_outcome}, 
when the potential event times $(\bm{T}(1), \bm{T}(0))$, potential censoring times $(\bm{C}(1), \bm{C}(0))$ and treatment assignment $\bm{Z}$ are mutually independent and randomly generated from 
\eqref{eq:model_potential_outcome} with $\rho = 0.5$ and $\theta = 1$, \eqref{eq:censor_heter} and \eqref{eq:assign_heter}, respectively. 
The values for $I_z$ and $I_c$ are chosen as in the four cases in Table \ref{tab:violation}. 
From Figure \ref{fig:logrank_violation_supp}, 
the distribution of the logrank statistic is far from the standard Gaussian distribution, when there exists heterogeneity among units' treatment assignment or censoring mechanisms. 
This indicates that Assumptions  \ref{asmp:rbe} and \ref{asmp:noninformative} are in some way necessary for the asymptotic standard Gaussianity of the logrank statistic. 
However, 
the distributions of the stratified logrank statistic under all four cases are close to the standard Gaussian distribution. 
This is not surprising because we can view the distribution of the stratified logrank statistic with random potential event times and random treatment assignment as the marginalization of the randomization distribution in Theorem \ref{thm:CLT_SLR}, 
whose asymptotic Gaussianity is established %
there and is also demonstrated in Figure \ref{fig:logrank_violation}.

\section{Exact randomization distribution of the logrank statistic}\label{app:exact}

\subsection{Lemmas}

For any integer $m\ge 0$ and constant $p \in [0,1]$, a random variable with probability mass function $m!/\{j!(m-j)!\} \cdot p^j (1-p)^{m-j}$ for $0\le j \le m$ is said to follow a binomial distribution with parameters $(m,p)$, denoted as $\Binomial(m,p)$. 
The following two lemmas study the randomization distribution of the treatment indicators for units at risk at time $t$ given observed information up to time $t$, 
and plays an important role in proving Theorems  \ref{thm:hyper_randomization} and \ref{thm:mds}. 

\begin{lemma}\label{lemma:Z_risk_event}
	Under Assumptions \ref{asmp:rbe} and \ref{asmp:noninformative} and the null hypothesis $H_0$ in \eqref{eq:H0},
	and conditioning on all the potential event times $\bm{T}(1)$ and $\bm{T}(0)$, 
	\begin{align*}
	& \quad \ \E\left\{ Z_i\I( W_i \ge t) \mid \bm{T}(1), \bm{T}(0), \I(W_i \ge t), \Delta_i  \I(W_i = t) \right\}\\
	& = \E\left\{ Z_i\I (W_i \ge t) \mid \bm{T}(1), \bm{T}(0), \I (W_i \ge t) \right\}\\
	& =  \I(W_i \ge t) \phi(t). 
	\end{align*}
\end{lemma}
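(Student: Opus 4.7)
The plan is to reduce the problem to a direct calculation by unpacking the event $\{W_i \ge t\}$ and the variable $\Delta_i \I(W_i = t)$ in terms of $T_i(Z_i)$, $C_i(Z_i)$, and $Z_i$. Since we condition on $(\bm{T}(1), \bm{T}(0))$ and $H_0$ says $T_i(1) = T_i(0)$, I treat $T_i \equiv T_i(0) = T_i(1)$ as a fixed constant. Then $W_i = \min\{T_i, C_i(Z_i)\}$ and $\Delta_i = \I\{T_i \le C_i(Z_i)\}$, and the only remaining randomness is in $Z_i$ and $(C_i(1), C_i(0))$, which by Assumptions \ref{asmp:rbe} and \ref{asmp:noninformative} are independent of $(\bm{T}(1), \bm{T}(0))$, with $Z_i \sim \Bernoulli(p_1)$ independent of $(C_i(1), C_i(0))$.

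The first equality would follow once I show that, given $\bm{T}(1), \bm{T}(0)$, the quantity $\Delta_i \I(W_i = t)$ is a deterministic function of $\I(W_i \ge t)$ (and hence adds no information). I would do a small case analysis on $T_i$ versus $t$: if $T_i < t$ then $\I(W_i \ge t) \equiv 0$ and $\Delta_i \I(W_i = t) \equiv 0$; if $T_i > t$ then $\I(W_i \ge t) = \I\{C_i(Z_i) \ge t\}$ while $W_i \ne t$ almost surely on $\{W_i \ge t\}$, so $\Delta_i \I(W_i = t) \equiv 0$; if $T_i = t$ then on $\{W_i \ge t\} = \{C_i(Z_i) \ge t\}$ we have $W_i = T_i = t$ and $\Delta_i = 1$, so $\Delta_i \I(W_i = t) = \I(W_i \ge t)$. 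In every case the extra conditioning variable is determined by $\I(W_i \ge t)$, establishing the first equality.

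The second equality reduces, on the nontrivial event $\I(W_i \ge t) = 1$ (which forces $T_i \ge t$), to computing $\Pr\{Z_i = 1 \mid C_i(Z_i) \ge t\}$, since for $T_i \ge t$ we have $\{W_i \ge t\} = \{C_i(Z_i) \ge t\}$. By the law of total probability, using independence of $Z_i$ and $(C_i(1), C_i(0))$ together with Assumption \ref{asmp:noninformative},
\begin{equation*}
\Pr\{Z_i = 1, C_i(Z_i) \ge t\} = p_1 G_1(t), \qquad \Pr\{C_i(Z_i) \ge t\} = p_1 G_1(t) + p_0 G_0(t) = G(t),
\end{equation*}
so the conditional probability equals $p_1 G_1(t)/G(t) = \phi(t)$. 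Combining with the trivial case $\I(W_i \ge t) = 0$ yields the claimed expression $\I(W_i \ge t)\phi(t)$.

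The steps are all elementary; the only mildly delicate point is treating the boundary case $T_i = t$ correctly in the case analysis, since that is where $\Delta_i \I(W_i = t)$ is actually nonzero with positive probability. Once that case is handled, the measurability argument for the first equality and the Bayes-type computation for the second are straightforward under Assumptions \ref{asmp:rbe} and \ref{asmp:noninformative}.
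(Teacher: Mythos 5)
Your proposal is correct and follows essentially the same route as the paper: the first equality comes from observing that, with the potential event times fixed, $\Delta_i\I(W_i=t)$ is a deterministic function of $\I(W_i\ge t)$ (the paper writes this compactly as $\Delta_i\I(W_i=t)=\I(W_i\ge t)\I(T_i(0)=t)$), and the second is the same Bayes-type computation $\pr\{Z_i=1, C_i(1)\ge t\}/\sum_z \pr\{Z_i=z, C_i(z)\ge t\}=\phi(t)$ using Assumptions 1 and 2 and the fact that $W_i\ge t$ forces $T_i(1)=T_i(0)\ge t$ under $H_0$. One tiny imprecision: in the case $T_i>t$ you argue $W_i\ne t$ ``almost surely,'' which can fail if the censoring distribution has an atom at $t$; the conclusion $\Delta_i\I(W_i=t)=0$ still holds there because $C_i=t<T_i$ implies $\Delta_i=0$, so nothing is lost.
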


\begin{lemma}\label{lemma:joint_D1_N1}
	Under Assumptions \ref{asmp:rbe} and \ref{asmp:noninformative} and the null hypothesis $H_0$ in \eqref{eq:H0}, 
	and conditioning on all the potential event times $\bm{T}(1)$ and $\bm{T}(0)$, 
	for any time $t\ge 0$, 
	\begin{align*}
	& \quad \ \pr\left\{
	\overline{D}_1(t) = A, 
	\overline{N}_1(t) = B
	\mid 
	\bm{T}(1), \bm{T}(0), 
	\Delta_i \I(W_i = t), \I(W_i \ge t), 
	1\le i \le n
	\right\}\\
	& = 
	\binom{\overline{D}(t)}{A} 
	\binom{\overline{N}(t) - \overline{D}(t)
	}{B-A} 
	\{\phi(t)\}^B \{1-\phi(t)\}^{\overline{N}(t)-B}.
	\end{align*}
\end{lemma}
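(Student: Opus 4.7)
The plan is to show that, conditional on the full vector of at-risk and event indicators (and on $\bm{T}(1), \bm{T}(0)$), the treatment indicators $\{Z_i : W_i \ge t\}$ are mutually independent Bernoulli random variables each with success probability $\phi(t)$. Once this is established, the claimed joint law of $(\overline{D}_1(t), \overline{N}_1(t))$ falls out by standard binomial algebra: $\overline{D}_1(t)$ counts treated units among the $\overline{D}(t)$ at-risk units with events at $t$, and $\overline{N}_1(t) - \overline{D}_1(t)$ counts treated units among the remaining $\overline{N}(t) - \overline{D}(t)$ at-risk units without events at $t$.

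First, I would argue independence across units. By Assumptions \ref{asmp:rbe}--\ref{asmp:noninformative} and because we are conditioning on $(\bm{T}(1), \bm{T}(0))$, the $n$ triples $(Z_i, C_i(1), C_i(0))$ are mutually independent. Under $H_0$, each pair $(\I(W_i \ge t), \Delta_i\I(W_i = t))$ is a deterministic function of $(Z_i, C_i(1), C_i(0))$ together with the fixed $T_i(0)$. Hence the $n$ triples $(Z_i, \I(W_i\ge t), \Delta_i\I(W_i=t))$ are mutually independent, so the conditional law of $\bm{Z}$ given all $2n$ indicators factorizes over $i$, with each $Z_i$ depending only on its own pair of indicators. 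Second, by Lemma \ref{lemma:Z_risk_event}, for every unit $i$,
\[
\E\{Z_i \mid \bm{T}(1), \bm{T}(0), \I(W_i\ge t), \Delta_i\I(W_i=t)\} = \phi(t)\,\I(W_i\ge t),
\]
so $Z_i$ is conditionally $\Bernoulli(\phi(t))$ for every at-risk unit and identically $0$ otherwise.

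Combining these two facts, I would partition the $\overline{N}(t)$ at-risk units into the $\overline{D}(t)$ units with events at $t$ and the remaining $\overline{N}(t)-\overline{D}(t)$ without. Given the conditioning, the number of treated units in the first subgroup is $\overline{D}_1(t) \sim \Binomial(\overline{D}(t), \phi(t))$, the number in the second is $\overline{N}_1(t) - \overline{D}_1(t) \sim \Binomial(\overline{N}(t) - \overline{D}(t), \phi(t))$, and they are independent. Multiplying the two mass functions at values $A$ and $B-A$ and collecting powers of $\phi(t)$ (total exponent $A + (B-A) = B$) and of $1-\phi(t)$ (total exponent $(\overline{D}(t)-A) + (\overline{N}(t)-\overline{D}(t)-(B-A)) = \overline{N}(t)-B$) yields exactly the displayed formula.

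The only real subtlety, and the part I expect to be the main conceptual obstacle, is justifying that additionally conditioning on $\Delta_i\I(W_i=t)$ does not change the conditional probability of $Z_i=1$ beyond what conditioning on $\I(W_i \ge t)$ alone delivers. This is precisely the content of Lemma \ref{lemma:Z_risk_event}, which I am assuming: under $H_0$ the potential event time does not depend on treatment, and whether $W_i = t$ is determined by the fixed $T_i(0)$ together with the sign of $C_i - t$, so once at-risk status is fixed the event indicator carries no extra information about $Z_i$. With Lemma \ref{lemma:Z_risk_event} in hand, no further technicality arises.
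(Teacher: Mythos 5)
Your proposal is correct and follows essentially the same route as the paper's proof: both partition the at-risk units into those with and without events at $t$, use the mutual independence of the $(Z_i, W_i, \Delta_i)$ triples together with Lemma \ref{lemma:Z_risk_event} to obtain two independent conditional binomials with common success probability $\phi(t)$, and multiply the mass functions. No gaps.
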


The following three lemmas study the randomization distributions of  the $(N_k, D_k, N_{1k})$'s, 
and play an important role in characterizing the variance of the overall observed-expected difference $L$ in \eqref{eq:L_U}.

\begin{lemma}\label{lemma:joint_dist_D_N1_given_N}
	Under Assumptions \ref{asmp:rbe} and \ref{asmp:noninformative} and the null hypothesis $H_0$ in \eqref{eq:H0}, 
	and conditioning on all the potential event times $\bm{T}(1)$ and $\bm{T}(0)$, 
	for $1\le k \le K$, 
	conditional on $N_k$, $D_k$ and $N_{1k}$  are independent, 
	i.e., 
	$$
	N_{1k} \ind D_k \mid \bm{T}(1), \bm{T}(0),  N_k, 
	$$
	and they follow hypergeometric and  binomial distributions, respectively: 
	\begin{align*}
	D_k \mid \bm{T}(1), \bm{T}(0), N_k  & \sim \Hypergeometric(n_k, d_k, N_k), \\ 
	N_{1k} \mid \bm{T}(1), \bm{T}(0),  N_k & \sim \Binomial(N_k, \phi_k).
	\end{align*}
\end{lemma}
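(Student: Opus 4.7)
The plan is to reduce everything to the vector of at-risk indicators $R_i = \I(W_i \ge t_k)$. Under $H_0$ the realized event time equals $T_i(0)$, so $R_i = \I(T_i(0) \ge t_k)\,\I(C_i \ge t_k)$. Thus $R_i \equiv 0$ for the $n - n_k$ units with $T_i(0) < t_k$, while for the $n_k$ units with $T_i(0) \ge t_k$ we have $R_i = \I(C_i \ge t_k)$. Since Assumptions \ref{asmp:rbe} and \ref{asmp:noninformative} make the triples $(Z_i, C_i(1), C_i(0))$ i.i.d.\ across $i$, the $C_i = Z_iC_i(1)+(1-Z_i)C_i(0)$ are also i.i.d., and hence these nontrivial $R_i$'s are i.i.d.\ $\Bernoulli(g_k)$.

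Next I would rewrite all three quantities in terms of the $R_i$'s: $N_k = \sum_{i:T_i(0)\ge t_k} R_i$, and a short check under $H_0$ gives $D_k = \sum_{i:T_i(0)=t_k} R_i$ (the $n_k - d_k$ units with $T_i(0)>t_k$ cannot contribute to $D_k$ because $W_i = t_k$ would force $C_i = t_k$, which in turn forces $\Delta_i = 0$). For $N_{1k} = \sum_i Z_i R_i$ the crucial step is to condition on the full vector $(R_j)_{j=1}^n$: by the i.i.d.\ structure of the triples $(Z_i, C_i(1), C_i(0))$ across $i$, the conditional law of $Z_i$ given $(R_j)_j$ coincides with that given $R_i$ alone, and a one-line computation yields $\pr(Z_i = 1 \mid R_i = 1) = p_1 G_1(t_k)/g_k = \phi_k$. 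Thus given $(R_j)_j$ the $Z_i$'s corresponding to $R_i = 1$ are conditionally i.i.d.\ $\Bernoulli(\phi_k)$, so $N_{1k} \mid (R_j)_j \sim \Binomial(N_k, \phi_k)$, a law depending on $(R_j)_j$ only through the scalar $N_k$.

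The three conclusions then follow by the tower property. Because $D_k$ is a deterministic function of $(R_j)_j$ while the conditional distribution of $N_{1k}$ given $(R_j)_j$ depends only on $N_k$, averaging over $(R_j)_j$ holding $N_k$ fixed gives both $N_{1k} \ind D_k \mid N_k$ and $N_{1k}\mid N_k \sim \Binomial(N_k, \phi_k)$. For $D_k \mid N_k$, I would invoke the exchangeability of the $n_k$ i.i.d.\ $\Bernoulli(g_k)$ variables $\{R_i : T_i(0) \ge t_k\}$: conditional on their sum being $N_k$, the subset of indices where $R_i=1$ is uniform over all size-$N_k$ subsets of $\{i : T_i(0) \ge t_k\}$, so the number falling among the $d_k$ positions with $T_i(0)=t_k$ is $\Hypergeometric(n_k, d_k, N_k)$.

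I expect the main obstacle to be justifying the conditional i.i.d.\ statement for the $Z_i$'s given the entire at-risk vector $(R_j)_j$, rather than marginally given each $R_i$ individually. This requires using the joint independence of the triples $(Z_i, C_i(1), C_i(0))$ across $i$, not just their marginal distributions, so that conditioning on the $n$-vector $(R_j)_j$ factorizes across $i$. Once this factorization is in hand, the distributional statements and the independence $N_{1k} \ind D_k \mid N_k$ are essentially algebraic consequences of standard binomial and hypergeometric identities.
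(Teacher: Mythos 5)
Your proposal is correct and follows essentially the same route as the paper's proof: both reduce $N_k$, $D_k$, $N_{1k}$ to the at-risk indicators, use the i.i.d.\ structure of $(Z_i, C_i(1), C_i(0))$ to show $\pr(Z_i=1\mid W_i\ge t_k)=\phi_k$ and hence $N_{1k}\mid(R_j)_j\sim\Binomial(N_k,\phi_k)$ (giving the conditional independence from $D_k$ by the tower property), and exploit that under $H_0$ the units with $T_i(0)=t_k$ versus $T_i(0)>t_k$ contribute independent $\Binomial(d_k,g_k)$ and $\Binomial(n_k-d_k,g_k)$ counts. The only cosmetic difference is the last step: you obtain the hypergeometric law of $D_k\mid N_k$ via exchangeability of the at-risk indicators, whereas the paper computes the joint pmf of $(D_k,N_k)$ explicitly and divides by the binomial marginal of $N_k$; the two are equivalent.
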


\begin{lemma}\label{lemma:marg_dist_D_N1_N}
	Under Assumptions \ref{asmp:rbe} and \ref{asmp:noninformative} and the null hypothesis $H_0$ in \eqref{eq:H0}, 
	and conditioning on all the potential event times $\bm{T}(1)$ and $\bm{T}(0)$, 
	for $1\le k \le K$, %
	$N_k$, $D_k$ and $N_{1k}$ all follow binomial distributions:
	\begin{align*}
	D_k \mid  \bm{T}(1), \bm{T}(0) & \sim  \Binomial(d_k, g_k), \ \ 
	N_{1k}  \mid  \bm{T}(1), \bm{T}(0) \sim \Binomial(n_k, g_k \phi_k), \\
	N_k \mid  \bm{T}(1), \bm{T}(0) & \sim \Binomial(n_k, g_k). 
	\end{align*}
\end{lemma}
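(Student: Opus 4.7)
The plan is to reduce each of the three statistics to a sum of $n_k$ (or $d_k$) independent identically distributed Bernoulli indicators by exploiting the finite population perspective: once we condition on $(\bm T(1),\bm T(0))$, the potential event times are fixed constants, and under $H_0$ we have $T_i(1) = T_i(0)$, so $W_i = \min\{T_i(0), C_i\}$ where $C_i = Z_iC_i(1)+(1-Z_i)C_i(0)$ is the realized censoring time. Assumptions \ref{asmp:rbe} and \ref{asmp:noninformative} imply that the pairs $(Z_i, C_i)$ are i.i.d.\ across $i$ (and independent of the fixed potential event times), which is the key structural fact I will exploit.

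First I would rewrite the three quantities in their stratum-free form. Since $\I\{W_i \ge t_k\} = \I\{T_i(0) \ge t_k\}\,\I\{C_i \ge t_k\}$ and $\Delta_i\I\{W_i = t_k\} = \I\{T_i(0) = t_k\}\,\I\{C_i \ge t_k\}$, we get
\begin{align*}
N_k &= \sum_{i:\, T_i(0)\ge t_k} \I\{C_i \ge t_k\}, \\
D_k &= \sum_{i:\, T_i(0)= t_k} \I\{C_i \ge t_k\}, \\
N_{1k} &= \sum_{i:\, T_i(0)\ge t_k} Z_i\,\I\{C_i \ge t_k\}.
\end{align*}
The index sets $\{i: T_i(0) \ge t_k\}$ and $\{i: T_i(0) = t_k\}$ have cardinality $n_k$ and $d_k$ respectively and are non-random after conditioning on $(\bm T(1), \bm T(0))$.

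Next I would compute the common marginal success probability of each indicator. Using the mixture representation of $C_i$, one checks
$\pr(C_i \ge t_k) = p_1 G_1(t_k) + p_0 G_0(t_k) = g_k$
and
$\pr(Z_i = 1,\, C_i \ge t_k) = p_1 G_1(t_k) = g_k\phi_k$,
where the second equality follows from the definition of $\phi_k$. Since $(Z_i, C_i)$ are i.i.d., the three sums above are sums of $n_k$, $d_k$, and $n_k$ i.i.d.\ Bernoulli indicators with success probabilities $g_k$, $g_k$, and $g_k\phi_k$ respectively, which yields exactly the three claimed binomial distributions.

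There is no significant obstacle here; the work is essentially a bookkeeping argument that transfers the independence structure of $(Z_i, C_i)$ through the definitions. The one subtlety worth flagging is that the reduction $\Delta_i\I\{W_i = t_k\} = \I\{T_i(0) = t_k, C_i \ge t_k\}$ uses $H_0$ crucially (so that the event ``unit $i$ has its event observed at $t_k$'' is determined by $T_i(0)$ and $C_i$ alone, regardless of $Z_i$), and that this is precisely the feature that makes the $T_i(0)$-indexed subsets the right index sets for the three sums.
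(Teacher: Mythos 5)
Your proposal is correct and follows essentially the same route as the paper: the paper's proof also reduces $D_k$ and $N_k$ to sums of independent Bernoulli indicators over the fixed index sets $\{i: T_i(0)=t_k\}$ and $\{i: T_i(0)\ge t_k\}$ (in the proof of Lemma \ref{lemma:joint_dist_D_N1_given_N}), using exactly the identities $\pr(C_i\ge t_k)=g_k$ and the i.i.d.\ structure of $(Z_i, C_i)$. The only cosmetic difference is that for $N_{1k}$ the paper composes $N_{1k}\mid N_k \sim \Binomial(N_k,\phi_k)$ with $N_k\sim\Binomial(n_k,g_k)$ rather than summing the indicators $Z_i\I(C_i\ge t_k)$ directly as you do; both yield $\Binomial(n_k, g_k\phi_k)$.
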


\begin{lemma}\label{lemma:mean_D_N_minus_D}
	Under Assumptions \ref{asmp:rbe} and \ref{asmp:noninformative} and the null hypothesis $H_0$, 
	and conditioning on all the potential event times $\bm{T}(1)$ and $\bm{T}(0)$, 
	\begin{align}\label{eq:mean_D_N_minus_D_given_N}
	\E\left\{
	D_k (N_k - D_k) \mid \bm{T}(1), \bm{T}(0), N_k
	\right\}
	& = N_k(N_k-1) h_k(1-h_k)\frac{n_k}{n_k-1},\\
	\label{eq:mean_N1_N_minus_N1_given_N}
	\E\left\{
	N_{1k} (N_k - N_{1k}) \mid \bm{T}(1), \bm{T}(0), N_k
	\right\} & = N_k(N_k-1) \phi_k (1 - \phi_k). 
	\end{align}
\end{lemma}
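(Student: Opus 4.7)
The plan is to derive both identities as direct consequences of Lemma~A4, which already gives the conditional distributions of $D_k$ and $N_{1k}$ given $N_k$ (and the potential event times). Once these distributions are in hand, both expectations reduce to standard second-moment calculations for hypergeometric and binomial random variables, respectively.

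For the second identity, Lemma~A4 says $N_{1k} \mid \bm{T}(1), \bm{T}(0), N_k \sim \Binomial(N_k, \phi_k)$. I would use the identity $\E[Y(N-Y)] = (N-1)\E[Y] - \E[Y(Y-1)]$ valid for any integer-valued $Y$ on $\{0,\dots,N\}$, plug in $\E[Y] = N\phi_k$ and the second factorial moment $\E[Y(Y-1)] = N(N-1)\phi_k^2$, and simplify to obtain $N_k(N_k-1)\phi_k(1-\phi_k)$. This step is purely mechanical.

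For the first identity, Lemma~A4 also gives $D_k \mid \bm{T}(1), \bm{T}(0), N_k \sim \Hypergeometric(n_k, d_k, N_k)$. Using the standard factorial-moment formulas $\E[D_k] = N_k d_k/n_k = N_k h_k$ and $\E[D_k(D_k-1)] = N_k(N_k-1) d_k(d_k-1)/\{n_k(n_k-1)\}$, applying the same identity $\E[D_k(N_k-D_k)] = (N_k-1)\E[D_k] - \E[D_k(D_k-1)]$, and collecting terms, the expression telescopes to $N_k(N_k-1) h_k(1-h_k) \cdot n_k/(n_k-1)$, matching the claim. The only bookkeeping to handle is writing $d_k/n_k = h_k$ and $(n_k - d_k)/n_k = 1 - h_k$ and keeping track of the $n_k/(n_k-1)$ finite-population correction that arises from sampling without replacement.

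There is no real obstacle here: both identities are second-factorial-moment computations for textbook distributions, and all the nontrivial probabilistic content, namely that the conditional laws of $D_k$ and $N_{1k}$ given $N_k$ are hypergeometric and binomial, has already been established in Lemma~A4. Because these formulas hold pointwise in every realization of $N_k$, they automatically hold as conditional expectations given $(\bm{T}(1), \bm{T}(0), N_k)$, yielding \eqref{eq:mean_D_N_minus_D_given_N} and \eqref{eq:mean_N1_N_minus_N1_given_N}.
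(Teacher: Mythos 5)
Your proposal is correct and follows essentially the same route as the paper: both arguments reduce the two identities to second-moment computations for the conditional hypergeometric and binomial laws established in Lemma \ref{lemma:joint_dist_D_N1_given_N}. The only cosmetic difference is that you work with factorial moments via $\E\{Y(N-Y)\} = (N-1)\E(Y) - \E\{Y(Y-1)\}$, whereas the paper writes $\E\{Y(N-Y)\mid N\} = \{N - \E(Y\mid N)\}\E(Y\mid N) - \Var(Y\mid N)$ and substitutes the standard mean and variance; both yield the same algebra.
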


\subsection{Proofs of the lemmas}

\begin{proof}[Proof of Lemma \ref{lemma:Z_risk_event}]
	Because $\Delta_i \I (W_i = t) = \I(W_i \ge t) \I(T_i(0)=t)$, $\Delta_i \I (W_i = t)$ is a constant given $\I (W_i \ge t)$. This implies 
	\begin{align}\label{eq:Z_risk_event_proof1}
		& \quad \ \E\left\{ Z_i\I(W_i \ge t) \mid  \I(W_i \ge t), \Delta_i  \I(W_i = t ) \right\} \\
		& = 
		\E \left\{ Z_i \I(W_i \ge t) \mid \I (W_i \ge t) \right\} 
		= \I(W_i \ge t) \E\left( Z_i \mid W_i \ge t \right)
		\nonumber\\
		& =  \I (W_i \ge t) 
		\frac{\pr(Z_i=1, W_i \ge t )}{\sum_{z=0}^{1} \pr(Z_i = z, W_i \ge t )}. 
		\nonumber
	\end{align}
	By definition, we can simplify \eqref{eq:Z_risk_event_proof1} as 
	\begin{align}\label{eq:Z_risk_event_proof}
	& \quad \ \E\left\{ Z_i\I(W_i \ge t) \mid \I(W_i \ge t), \Delta_i  \I(W_i = t ) \right\} 
	\\
	& = \I (W_i \ge t) 
	\frac{\pr(Z_i=1, C_i(1)\ge t, T_i(1) \ge t )}{\sum_{z=0}^{1} \pr(Z_i = z, C_i(z) \ge t, T_i(z)\ge t )} 
	\nonumber
	\\
	& = 
	 \I (W_i \ge t) 
	\frac{ 
	\I(T_i(1)\ge t) p_1 G_1(t)
	}{
	\I(T_i(1)\ge t) p_1 G_1(t) + \I(T_i(0)\ge t) p_0 G_0(t)
	}. 
	\nonumber
	\end{align}
	Because under the null hypothesis $H_0$, 
	$W_i \ge t$ implies that $T_i(1) = T_i(0) \ge t$, we can further simplify \eqref{eq:Z_risk_event_proof} as 
	\begin{align*}
		& \quad \ \E\left\{ Z_i\I(W_i \ge t) \mid \I(W_i \ge t), \Delta_i  \I(W_i = t ) \right\} \\
		& = 
		 \I (W_i \ge t) 
		\frac{ 
			p_1 G_1(t)
		}{
			p_1 G_1(t) + p_0 G_0(t)
		}\\
	& = \I (W_i\ge t) \phi(t). 
	\end{align*}
	Therefore, Lemma \ref{lemma:Z_risk_event} holds. 
\end{proof}

\begin{proof}[Proof of Lemma \ref{lemma:joint_D1_N1}]
	For any $t\ge 0$, 
	let 
	$$
	\overline{\mathcal{I}}(t) = \{i: \Delta_i\I(W_i =t) = 1, 1\le i \le n \}
	$$ 
	be the set of units having events at time $t$,  
	$$
	\overline{\mathcal{J}}(t) = \{i: 
	\I(W_i \ge t) = 1, 
	\Delta_i\I(W_i =t) = 0, 1\le i \le n \}
	$$ 
	be the set of units at risk but not having events at time $t$, 
	and $|\overline{\mathcal{I}}(t)|$ and 
	$|\overline{\mathcal{J}}(t)|$ be the cardinalities of these two sets. 
	For any $t\ge 0$, we define a $\sigma$-algebra $\mathcal{E}(t)$ as follows: 
	$$
	\mathcal{E}(t) 
	= \sigma \left\{ 
	\Delta_i \I(W_i = t), \I(W_i \ge t), 
	1\le i \le n \right\}.
	$$
	By definition, the conditional distribution of $\overline{D}_1(t)$ and $\overline{N}_1(t)$ given $\mathcal{E}(t)$  has the following equivalent forms:
	\begin{align}\label{eq:joint_D1_N1_step1}
	& \quad \ \pr\left\{
	\overline{D}_1(t) = A, 
	\overline{N}_1(t) = B
	\mid 
	\mathcal{E}(t)
	\right\}
	\\
	& = 
	\pr\left\{
	\sum_{i=1}^n Z_i \Delta_i \I(W_i = t) = A, 
	\ 
	\sum_{i=1}^n Z_i \I(W_i \ge t)  = B
	\mid 
	\mathcal{E}(t) \right\}
	\nonumber
	\\
	& = 
	\pr\left\{
	\sum_{i \in \overline{\mathcal{I}}(t)} Z_i = A, 
	\sum_{i \in \overline{\mathcal{I}}(t) \cup \overline{\mathcal{J}}(t) } Z_i  = B
	\mid 
	\mathcal{E}(t)
	\right\}
	\nonumber
	\\
	\nonumber
	&  = 
	\pr\left\{
	\sum_{i \in \overline{\mathcal{I}}(t)} Z_i = A, 
	\sum_{i \in \overline{\mathcal{J}}(t) } Z_i  = B - A
	\mid 
	\mathcal{E}(t)
	\right\}.
	\end{align}
	Under Assumptions \ref{asmp:rbe} and \ref{asmp:noninformative}, 
	$(Z_i, W_i, \Delta_i)$'s are mutually independent for $1\leq i\leq n$. 
	Thus, 
	from Lemma \ref{lemma:Z_risk_event}, 
	conditional on %
	$\mathcal{E}(t)$, 
	$\sum_{i \in \overline{\mathcal{I}}(t)} Z_i$ follows a binomial distribution with parameters $(|\overline{\mathcal{I}}(t)|, \phi(t))$, 
	$\sum_{i \in \overline{\mathcal{J}}(t) } Z_i$ follows a binomial distribution with parameters $(|\overline{\mathcal{J}}(t)|, \phi(t))$, and they are independent. 
	Therefore, the conditional distribution \eqref{eq:joint_D1_N1_step1} has the following equivalent forms: 
	\begin{align*}
	& \quad \ \pr\left\{
	\overline{D}_1(t) = A, 
	\overline{N}_1(t) = B
	\mid 
	\mathcal{E}(t)
	\right\}
	\nonumber
	\\
	& = 
	\binom{|\overline{\mathcal{I}}(t)|}{A} \{\phi(t)\}^{A} \{1 - \phi(t)\}^{|\overline{\mathcal{I}}(t)|- A}
	\binom{|\overline{\mathcal{J}}(t)|}{B-A} \{\phi(t)\}^{B-A} \{1 - \phi(t)\}^{|\overline{\mathcal{J}}(t)|- (B-A)}
	\nonumber
	\\
	& = 
	\binom{|\overline{\mathcal{I}}(t)|}{A} 
	\binom{|\overline{\mathcal{J}}(t)|}{B-A}
	\{\phi(t)\}^{B} 
	\{1 - \phi(t)\}^{|\overline{\mathcal{I}}(t)|+ |\overline{\mathcal{J}}(t)|- B}. 
	\end{align*}
	By definition, 
	$|\overline{\mathcal{I}}(t)| = \overline{D}(t)$, 
	$|\overline{\mathcal{J}}(t)| = \overline{N}(t) - \overline{D}(t),$ 
	and thus Lemma \ref{lemma:joint_D1_N1} holds. 
\end{proof}

\begin{proof}[Proof of Lemma \ref{lemma:joint_dist_D_N1_given_N}]
	First, we derive the conditional distribution of $N_{1k}$ given $N_k$, and show that $N_{1k}$ and $D_k$ are conditionally independent given $N_k$. 
	For any $1\le k \le K$, from Lemma \ref{lemma:Z_risk_event} and the mutual independence of the $(Z_i, W_i, \Delta_i)$'s under Assumptions \ref{asmp:rbe} and \ref{asmp:noninformative}, we can know that 
	\begin{align}\label{eq:N1_given_indicators}
	& \quad \ \sum_{i=1}^{n} Z_i \I (W_i \ge t_k) \mid \I(W_j \ge t_k), \Delta_j \I(W_j = t_k), 1\le j \le n \\
	& \sim \ 
	\Binomial\left(
	N_k, \phi_k
	\right). 
	\nonumber
	\end{align}
	By definition, 
	both $N_k$ and $D_k$ are deterministic functions of the 
	$\I(W_j \ge t_k)$'s and $\Delta_j \I(W_j = t_k)$'s, respectively. 
	Thus, from \eqref{eq:N1_given_indicators}, conditional on $N_k$,  $N_{1k}$ follows a binomial distribution with parameters $(N_k, \phi_k)$, and it is independent of $D_k$, i.e., 
	$$
	N_{1k} \mid N_k \sim \Binomial(N_k, \phi_k), 
	\ \ \text{ and } \ \ 
	N_{1k} \ind D_k \mid N_k. 
	$$
	
	Second, we study the distribution of $(D_k,N_k)$, and derive the conditional distribution of $D_k$ given $N_k$. 
	Under $H_0$, for any unit $i$ with $\Delta_i \I (W_i =  t_k) =1$, we must have $T_i(1) = T_i(0)  = t_k$. 
	Similarly, for any unit $i$ with $\I(W_i \ge t_k)=1$, we must have $T_i(1) = T_i(0) \ge t_k$. Thus, by definition, $D_k \le d_k$ and $D_k \le N_k \le n_k$. 
	For any constant nonnegative integers $A, B$ satisfying $A\le d_k$ and $A\le B \le n_k$, the  probability that $D_k = A$ and $N_k = B$ has the following equivalent forms:
	\begin{align}\label{eq:joint_D_N_sum}
	& \quad \ \pr\left(
	D_k = A, N_k = B
	\right) \\
	& = \pr\left(
	\sum_{i=1}^{n} \Delta_i \I (W_i =  t_k) = A, \
	\sum_{i=1}^{n}\I (W_i \ge t_k) = B
	\right)
	\nonumber
	\\
	& = 
	\pr\left(
	\sum_{i: T_i(0) = t_k} \Delta_i \I (W_i =  t_k) = A, 
	\sum_{i: T_i(0)\ge t_k}\I(W_i \ge t_k) = B
	\right). 
	\nonumber
	\end{align}
	Under $H_0$, for any unit $i$ with $T_i(0) = t_k$, we must have $\Delta_i \I\{W_i = t_k\} =  \I \{W_i \ge  t_k\}$. Thus, \eqref{eq:joint_D_N_sum}  reduces to 
	\begin{align}\label{eq:joint_D_N_two_indep_sum}
	& \quad\ \pr\left(
	D_k = A, N_k = B
	\right) \\
	& = 
	\pr\left(
	\sum_{i: T_i(0) = t_k} \I(W_i \ge t_k) = A, 
	\sum_{i: T_i(0) \ge t_k}\I(W_i \ge t_k) = B
	\right)
	\nonumber
	\\
	& = \pr\left(
	\sum_{i: T_i(0) = t_k} \I(W_i \ge t_k) = A, 
	\sum_{i: T_i(0)> t_k}\I(W_i \ge t_k) = B-A
	\right).
	\nonumber
	\end{align}
	Because (i) for any unit $i$ with $T_i(1) = T_i(0) \ge t_k$, 
	$
	\pr (W_i \ge t_k) 
	= 
	\pr\left(
	C_i \ge t_k
	\right) = G(t_k) = g_k,
	$
	and 
	(ii) $W_i$'s are mutually independent for $1\le i\le n$, 
	we can know that 
	\begin{align}\label{eq:dist_Dk}
	D_k & = \sum_{i: T_i(0) = t_k} \I(W_i  \ge  t_k) \sim \Binomial(d_k, g_k), 
	\\
	N_k - D_k & =  \sum_{i: T_i(0) > t_k}\I(W_i \ge t_k) \sim \Binomial(n_k-d_k, g_k), 
	\nonumber
	\end{align}
	and they are mutually independent. 
	Thus, we can further simplify \eqref{eq:joint_D_N_two_indep_sum} as
	\begin{align*}
	& \quad \ \pr\left(
	D_k = A, N_k = B
	\right) \\
	& = 
	\binom{
		d_k
	}{A}
	g_k^A (1-g_k)^{d_k - A}
	\binom{n_k - d_k}{B-A} 
	g_k^{B-A} (1-g_k)^{n_k - d_k - (B-A)}\\
	& = \binom{d_k}{A}
	\binom{n_k - d_k}{B-A} 
	g_k^B (1-g_k)^{n_k- B}. 
	\end{align*}
	This immediately implies the marginal distribution of $N_k$:   
	\begin{align}\label{eq:dist_Nk}
	& \quad \ \pr\left(
	N_k = B
	\right) \\
	& = 
	\sum_{A}
	\pr\left(
	D_k = A, N_k = B
	\right) 
	= 
	\sum_{A}
	\binom{d_k}{A}
	\binom{n_k - d_k}{B-A} g_k^B (1-g_k)^{n_k- B}
	\nonumber
	\\
	& = 
	\binom{n_k}{B} g_k^B (1-g_k)^{n_k- B}, 
	\nonumber
	\end{align}
	i.e., $N_k$ follows a binomial distribution with parameters $(n_k, g_k)$. 
	Therefore, the conditional distribution of $D_k$ given $N_k$ has the following equivalent forms:
	\begin{align*}
	\pr\left(
	D_k = A \mid N_k = B
	\right) 
	& = 
	\frac{
		\pr(
		D_k = A, N_k = B
		) 
	}{
		\pr(
		N_k = B
		) 
	}
	= \frac{
		\binom{d_k}{A}
		\binom{n_k - d_k}{B-A} 
		g_k^B (1-g_k)^{n_k- B}
	}{
		\binom{n_k}{B} g_k^B (1-g_k)^{n_k- B}
	}
	\\
	& = 
	\frac{
		\binom{d_k}{A}
		\binom{n_k - d_k}{B-A} 
	}{
		\binom{n_k}{B}
	},
	\end{align*}
	i.e., conditional on $N_k$, $D_k$ follows a hypergeometric distribution with parameters $(n_k, d_k, N_k)$. 
	
	From the above, Lemma \ref{lemma:joint_dist_D_N1_given_N} holds.
\end{proof}

\begin{proof}[Proof of Lemma \ref{lemma:marg_dist_D_N1_N}]
	First, from \eqref{eq:dist_Dk}, $D_k$ follows a binomial distribution with parameters $(d_k, g_k)$. 
	Second, from \eqref{eq:dist_Nk}, $N_k$ follows a binomial distribution with parameters $(n_k, g_k)$. 
	Third, because (i) $N_k\sim \Binomial(n_k, g_k)$ and (ii) $N_{1k} \mid N_k \sim \Binomial(N_k, \phi_k)$
	from Lemma \ref{lemma:joint_dist_D_N1_given_N}, 
	the property of binomial distributions implies that $N_{1k} \sim \Binomial(n_k, g_k \phi_k)$. 
	From the above, Lemma \ref{lemma:marg_dist_D_N1_N} holds. 
\end{proof}

\begin{proof}[Proof of Lemma \ref{lemma:mean_D_N_minus_D}]
	First, we prove \eqref{eq:mean_D_N_minus_D_given_N}. 
	The left hand side of \eqref{eq:mean_D_N_minus_D_given_N} has the following equivalent forms:
	\begin{align}\label{eq:mean_D_N_minus_D_mean_var}
	& \quad \ \E\left\{
	D_k (N_k - D_k) \mid N_k
	\right\}\\
	& = 
	N_k \E\left(
	D_k \mid N_k
	\right) - 
	\E\left(
	D_k^2 \mid N_k
	\right)
	\nonumber
	\\
	& = 
	N_k \E\left(
	D_k \mid N_k
	\right) - 
	\left\{\E\left(
	D_k \mid N_k
	\right)\right\}^2 - \Var\left(
	D_k \mid N_k
	\right)
	\nonumber
	\\
	& = 
	\left\{
	N_k - \E\left(
	D_k \mid N_k
	\right)
	\right\} \E\left(
	D_k \mid N_k
	\right) - 
	\Var\left(
	D_k \mid N_k
	\right).
	\nonumber
	\end{align}
	From Lemma \ref{lemma:joint_dist_D_N1_given_N} and the property of hypergeometric distribution, we have 
	\begin{align*}
	\E\left\{
	D_k (N_k - D_k) \mid N_k
	\right\}
	& = 
	\left(
	N_k - N_k h_k
	\right) N_k h_k - 
	\frac{
		d_k (n_k - d_k) N_k (n_k - N_k)
	}{n_k^2(n_k-1)}\\
	& = 
	N_k^2 
	(
	1 - h_k
	) h_k - 
	N_k h_k (1-h_k)
	\frac{
		n_k - N_k
	}{n_k-1}\\
	& = 
	N_k (N_k-1) h_k (1 - h_k) \frac{n_k}{n_k-1}.
	\end{align*}
	
	Second, we prove \eqref{eq:mean_N1_N_minus_N1_given_N}. 
	By the same logic as \eqref{eq:mean_D_N_minus_D_mean_var}, 
	\begin{align}\label{eq:mean_N1_N_minus_N1_mean_var}
	& \quad \ \E\left\{
	N_{1k} (N_k - N_{1k}) \mid N_k
	\right\}\\
	& = 
	\left\{
	N_k - \E\left(
	N_{1k} \mid N_k
	\right)
	\right\} \E\left(
	N_{1k} \mid N_k
	\right) - 
	\Var\left(
	N_{1k} \mid N_k
	\right).
	\nonumber
	\end{align}
	From Lemma \ref{lemma:joint_dist_D_N1_given_N} and 
	the property of binomial distribution, we have
	\begin{align*}
	\E\left\{
	N_{1k} (N_k - N_{1k}) \mid N_k
	\right\}
	& = 
	(N_k - N_k \phi_k) N_k \phi_k - N_k \phi_k (1-\phi_k)
	\\
	& = 
	N_k^2 (1 - \phi_k) \phi_k - N_k \phi_k (1-\phi_k)\\
	& = 
	N_k (N_k - 1) \phi_k (1 - \phi_k).
	\end{align*}
	
	From the above, Lemma \ref{lemma:mean_D_N_minus_D} holds. 
\end{proof}

\subsection{Proofs of the theorems}

\begin{proof}[Proof of Theorem \ref{thm:hyper_randomization}]
	From Lemma \ref{lemma:joint_D1_N1}, 
	\begin{align*}
	& \ \quad \pr\left\{
	\overline{N}_1(t) = B
	\mid 
	\Delta_i \I(W_i = t), \I(W_i \ge t), 
	1\le i \le n
	\right\}\\
	& = 
	\sum_{A}
	\pr\left\{
	\overline{D}_1(t) = A, 
	\overline{N}_1(t) = B
	\mid 
	\Delta_i \I(W_i = t), \I(W_i \ge t), 
	1\le i \le n
	\right\}\\
	& = \sum_A \binom{\overline{D}(t)}{A} 
	\binom{\overline{N}(t) - \overline{D}(t)
	}{B-A} 
	\{\phi(t)\}^B \{1-\phi(t)\}^{\overline{N}(t)-B}\\
	& = \binom{\overline{N}(t)}{B} \{\phi(t)\}^B \{1-\phi(t)\}^{\overline{N}(t)-B},
	\end{align*}
	and thus
	the conditional distribution of $\overline{D}_1(t)$ given $\overline{N}_1(t)$, $\Delta_i \I(W_i = t)$'s, and $\I(W_i \ge t)$'s is
	\begin{align}\label{eq:cond_dist_D1_given_N1_indicators}
	& \quad \ \pr\left\{
	\overline{D}_1(t) = A
	\mid 
	\overline{N}_1(t) = B, 
	\Delta_i \I(W_i = t), \I(W_i \ge t), 
	1\le i \le n
	\right\}
	\\
	& = 
	\frac{\pr\left\{
		\overline{D}_1(t) = A, \overline{N}_1(t) = B
		\mid 
		\Delta_i \I(W_i = t), \I(W_i \ge t), 
		1\le i \le n
		\right\}}{
		\pr\left\{
		\overline{N}_1(t) = B
		\mid 
		\Delta_i \I(W_i = t), \I(W_i \ge t), 
		1\le i \le n
		\right\}
	}
	\nonumber
	\\
	& = 
	\frac{\binom{\overline{D}(t)}{A} 
		\binom{\overline{N}(t) - \overline{D}(t)
		}{B-A} 
		\{\phi(t)\}^B \{1-\phi(t)\}^{\overline{N}(t)-B}}{
		\binom{\overline{N}(t)}{B} \{\phi(t)\}^B \{1-\phi(t)\}^{\overline{N}(t)-B}
	}
	\nonumber
	\\
	& = \binom{\overline{D}(t)}{A} 
	\binom{\overline{N}(t) - \overline{D}(t)
	}{B-A} /
	\binom{\overline{N}(t)}{B}.
	\nonumber
	\end{align}
	From \eqref{eq:cond_dist_D1_given_N1_indicators}, by the law of iterated expectation, 
	we have
	\begin{align*}
	& \quad \ \pr\left\{
	\overline{D}_1(t) = A
	\mid 
	\overline{N}_1(t), 
	\overline{N}(t), \overline{D}(t)
	\right\}\\
	& = 
	\E\left[
	\pr\left\{
	\overline{D}_1(t) = A
	\mid 
	\overline{N}_1(t), 
	\Delta_i \I(W_i = t), \I(W_i \ge t), 
	\forall i
	\right\}
	\mid \overline{N}_1(t), 
	\overline{N}(t), \overline{D}(t)
	\right]\\
	& = 
	\binom{\overline{D}(t)}{A} 
	\binom{\overline{N}(t) - \overline{D}(t)
	}{\overline{N}_1(t)-A} /
	\binom{\overline{N}(t)}{\overline{N}_1(t)}. 
	\end{align*}
	Therefore, Theorem \ref{thm:hyper_randomization} holds. 
\end{proof}

\begin{proof}[Proof of Theorem \ref{thm:mds}]
	First, we show that conditional on $\mathcal{F}_{k-1}$, $D_{1k}$ follows a hypergeometric distribution with parameters $(N_{k},  D_k, N_{1k})$. 
	For $1\leq k\leq K$, define 
	a $\sigma$-algebra $\mathcal{H}_{k-1}$ as 
	\begin{align*}
		\mathcal{H}_{k-1} & = 
		\sigma \left\{
		\I(W_i \geq t_q), 
		\Delta_i\I(W_i =t_q), 
		Z_i \Delta_i \I(W_i = t_{q-1}), 
		\vphantom{\sum_{j=1}^{n} Z_j \I(t_{q} \le W_j < t_k)}
		\right.\\
		& \left. 
		\qquad  \qquad \qquad \qquad \qquad
		\sum_{j=1}^{n} Z_j \I(t_{q} \le W_j < t_k), \  1\leq q \leq k, 1\leq i\leq n
		\right\}, 
	\end{align*}
	where $\I(t_{k} \le W_j < t_k)$ is defined to be constant zero. 
	The set of units having events at time $t_k$ is $\mathcal{I}_k = \{i: \Delta_i\I(W_i =t_k) = 1, 1\le i \le n \}$, 
	and the set of units at risk but not having events at time $t_k$ is 
	$
	\mathcal{J}_k = \{i: \I(W_i \ge t_k) = 1, \Delta_i\I(W_i =t_k) = 0, 1\le i \le n \}. 
	$
	By definition, the cardinalities of sets $\mathcal{I}_k$ and $\mathcal{J}_k$ are $D_k$ and $N_k - D_k.$ 
	We can then divide the information in $\mathcal{H}_{k-1}$ into three parts for disjoint sets of units: 
	(i) $\mathcal{I}_k$, (ii) $\mathcal{J}_k$, and (iii)
	\begin{align*}
		\I(W_i \geq t_q), 
		\Delta_i\I(W_i =t_q), 
		Z_i \Delta_i \I(W_i = t_{q-1}), 
		\sum_{j \in \mathcal{I}_k^c\cap \mathcal{J}_k^c} Z_j \I(t_{q} \le W_j < t_k), \\
		 1\leq q \leq k, i \in \mathcal{I}_k^c\cap \mathcal{J}_k^c.
	\end{align*}
	Under Assumptions \ref{asmp:rbe} and \ref{asmp:noninformative}, $(Z_i, W_i, \Delta_i)$'s are mutually independent for $1\leq i\leq n$. 
	Thus, 
	the conditional distribution of $(D_{1k}, N_{1k})$ given $\mathcal{H}_{K-1}$ has the following equivalent forms: 
	\begin{align}\label{eq:D1_N1_given_H}
	& \quad \ \pr\left(
	D_{1k} = A, N_{1k} = B \mid \mathcal{H}_{k-1}
	\right) 
	\\
	& = \pr\left(
	D_{1k} = A, N_{1k} = B \mid \mathcal{I}_k, \mathcal{J}_k
	\right)
	\nonumber
	\\
	& = 
	\pr\left\{
	D_{1k} = A, N_{1k} = B \mid 
	\Delta_i \I(W_i = t_k), \I(W_i \ge t_k), 1\le i \le n
	\right\}.
	\nonumber
	\end{align}
	From Lemma \ref{lemma:joint_D1_N1}, 
	we can further simplify \eqref{eq:D1_N1_given_H} as
	\begin{align}\label{eq:joint_D1_N1_given_H}
	& \quad \ \pr\left(
	D_{1k} = A, N_{1k} = B \mid \mathcal{H}_{k-1}
	\right)
	\\
	& = 
	\pr\left\{
	\overline{D}_{1}(t_k) = A, \overline{N}_{1}(t_k) = B \mid 
	\Delta_i \I(W_i = t_k), \I(W_i \ge t_k), 
	\forall i
	\right\}
	\nonumber
	\\
	& = 
	\binom{D_k}{A} 
	\binom{N_k - D_k
	}{B-A} 
	\phi_k^B (1-\phi_k)^{N_k-B}.
	\nonumber
	\end{align}
	This implies that the marginal distribution of $N_{1k}$ given $\mathcal{H}_{k-1}$ is 
	\begin{align}\label{eq:marg_N1_given_H}
	& \quad \ \pr\left(
	N_{1k} = B \mid \mathcal{H}_{k-1}
	\right)
	\\
	& = 
	\sum_{A} \pr\left(
	D_{1k} = A, N_{1k} = B \mid \mathcal{H}_{k-1}
	\right)
	\nonumber
	\\
	& = \binom{N_k}{B} \phi_k^B (1-\phi_k)^{N_k-B}. 
	\nonumber
	\end{align}
	From \eqref{eq:joint_D1_N1_given_H} and \eqref{eq:marg_N1_given_H}, the conditional distribution of $D_{1k}$ given $N_{1k}$ and $\mathcal{H}_{k-1}$ is  
	\begin{align*}
	\pr\left(
	D_{1k} = A \mid N_{1k}, \mathcal{H}_{k-1}
	\right)
	= 
	\frac{\binom{D_k}{A} \binom{N_k - D_k }{N_{1k}-A} }{
		\binom{N_k}{N_{1k}}}.
	\end{align*}
	This implies that, conditional on $\mathcal{H}_{k-1}$ and $N_{1k}$, $D_{1k}$ follows a hypergeometric distribution with parameters $(N_k, D_k, N_{1k})$. 
	Because for $1\le q\le k$, 
	\begin{align*}
	\sum_{j=1}^{n} Z_j \I(t_{q} \le W_j < t_k) + N_{1k} & = 
	\sum_{j=1}^{n} Z_j \I(t_{q} \le W_j < t_k) + \sum_{j=1}^{n} Z_j  \I(W_j \ge t_k) \\
	& = 
	\sum_{j=1}^{n} Z_j \I(W_j \ge t_{q}), 
	\end{align*}
	by definition, $(\mathcal{H}_{k-1}, N_{1k})$ contains the same information as $\mathcal{F}_{k-1}$. 
	Thus, 
	the conditional distribution of $D_{1k}$ given $\mathcal{H}_{k-1}$ and $N_{1k}$ is the same as that given $\mathcal{F}_{k-1}$, i.e., conditional on $\mathcal{F}_{k-1}$, $D_{1k}$ follows a hypergeometric distribution with parameters $(N_k, D_k, N_{1k})$. 
	
	Second, we show that $\{D_{1k}-M_{k}\}_{k=1}^K$ is a martingale difference sequence with respect to filtration $\mathcal{F}_k$'s. 
	By construction, $\mathcal{F}_k$'s constitute a filtration, and $D_{1k}$ is $\mathcal{F}_k$-measurable. 
	From the first part, the property of hypergeometric distribution, and the definition of $M_k$,  
	$\E(D_{1k} \mid \mathcal{F}_{k-1}) = \ED_k$. 
	Therefore, $\{D_{1k}-M_{k}\}_{k=1}^K$ is a martingale difference sequence with respect to filtration $\mathcal{F}_k$'s.
	
	From the above, Theorem \ref{thm:mds} holds. 
\end{proof}

\begin{proof}[Proof of Theorem \ref{thm:mean_var_L}]
	Theorem \ref{thm:mds} immediately implies that $\E(L) = 0$. 
	From Lemmas \ref{lemma:joint_dist_D_N1_given_N} and  \ref{lemma:mean_D_N_minus_D}, 
	\begin{align*}
	& \quad \ \E\left\{
	D_k(N_k-D_k) N_{1k} (N_k - N_{1k}) \mid N_k 
	\right\}\\
	& = 
	\E\left\{
	D_k(N_k-D_k) \mid N_k 
	\right\}
	\cdot
	\E\left\{
	N_{1k} (N_k - N_{1k}) \mid N_k 
	\right\}\\
	& = N_k^2 (N_k-1)^2 h_k(1-h_k) \phi_k (1 - \phi_k)\frac{n_k}{n_k-1}.
	\end{align*}
	This then implies
	\begin{align*}
	\E(V_k \mid N_k) & = 
	\E\left\{
	\frac{D_k(N_k-D_k) N_{1k} (N_k - N_{1k})}{N_k^2(N_k-1)} \mid N_k
	\right\}
	\\
	& 
	= 
	\frac{N_k^2 (N_k-1)^2 h_k(1-h_k) \phi_k (1 - \phi_k)}{N_k^2(N_k-1)} \frac{n_k}{n_k-1} \\
	& = 
	(N_k-1 + \I\{N_k =0 \}) h_k(1-h_k) \phi_k (1 - \phi_k)\frac{n_k}{n_k-1}. 
	\end{align*}
	From Lemma \ref{lemma:marg_dist_D_N1_N} and the law of iterated expectation, we have
	\begin{align}\label{eq:mean_Vk}
	\E(V_k) 
	& = \E\left\{ \E(V_k \mid N_k)  \right\} \\
	& = 
	\left\{
	\E(N_k)-1 + \pr( N_k =0 )
	\right\} h_k(1-h_k) \phi_k (1 - \phi_k)\frac{n_k}{n_k-1}
	\nonumber
	\\
	& = 
	\left\{
	n_k g_k -1 + (1-g_k)^{n_k}
	\right\} h_k(1-h_k) \phi_k (1 - \phi_k)\frac{n_k}{n_k-1}
	\nonumber
	\\
	& = 
	\frac{n_k^2}{n_k-1}  h_k (1-h_k) \phi_k (1-\phi_k) 
	\left\{
	g_k - \frac{1 - (1-g_k)^{n_k}}{n_k} 
	\right\}.
	\nonumber
	\end{align}
	From Theorem \ref{thm:mds} and  the property of the martingale difference sequence, 
	\begin{align*}
	\Var(L) & = \E(U) = 
	\E\left(
	\sum_{k=1}^{K} V_k
	\right)
	= 
	\sum_{k=1}^{K} \E(V_k)\\
	& = \sum_{k=1}^{K} \frac{n_k^2}{n_k-1}  h_k (1-h_k) \phi_k (1-\phi_k) 
	\left\{
	g_k - \frac{1 - (1-g_k)^{n_k}}{n_k} 
	\right\}. 
	\end{align*}
	Therefore, Theorem \ref{thm:mean_var_L} holds. 
\end{proof}

\section{Asymptotic distribution of the logrank statistic with infinitely many distinct potential event times}\label{app:asym_infinite}

\subsection{Lemmas}

To prove Theorem \ref{thm:CLT_conti}, we need the following six lemmas. 
Lemma \ref{lemma:martingale_CLT} gives a sufficient condition for the asymptotic Gaussianity of the logrank statistic using the martingale central limit theorem. 
It shows that the consistency of $U$ for the variance of $L$ is crucial for establishing the central limit theorem. 
To prove that 
conditioning on all the potential event times, 
$U/\E(U \mid \bm{T}(1), \bm{T}(0)) \mid \bm{T}(1), \bm{T}(0) \convergep 1$, or equivalently, 
$$
\frac{U-\E(U\mid \bm{T}(1), \bm{T}(0))}{
	\E(U\mid \bm{T}(1), \bm{T}(0))
} \mid \bm{T}(1), \bm{T}(0)  \convergep 0,
$$
we need to bound the denominator $\E(U \mid \bm{T}(1), \bm{T}(0) )$ from below, and bound the absolute value of the numerator, $|U-\E(U\mid \bm{T}(1), \bm{T}(0))|,$ from above. 
Lemma \ref{lemma:bound_EU} gives a lower bound of $\E(U\mid \bm{T}(1), \bm{T}(0))$. 
To bound $U-\E(U\mid \bm{T}(1), \bm{T}(0))$, 
for $1\le k \le K$, 
we decompose $V_k  - \E(V_k\mid \bm{T}(1), \bm{T}(0))$ into 
four parts: 
$$
V_k  - \E(V_k \mid \bm{T}(1), \bm{T}(0) ) = \xi_{1k} + \xi_{2k} + \xi_{3k} + \xi_{4k}, 
$$
where
\begin{align*}
\xi_{1k} & \equiv \frac{D_k(N_k-D_k) }{N_k^2 (N_k-1)}
\left[ N_{1k} (N_k - N_{1k})
- \E\{N_{1k} (N_k - N_{1k})\mid \bm{T}(1), \bm{T}(0), N_k\}
\right], 
\\[12pt]
\xi_{2k} & \equiv \frac{
	D_k  
	\E\{N_{1k} (N_k - N_{1k})\mid \bm{T}(1), \bm{T}(0), N_k\}
}{
	N_k^2 (N_k-1)
} 
\\
& %
\quad \ 
\times
\left\{(N_k-D_k)
-
 \E(N_k-D_k \mid \bm{T}(1), \bm{T}(0), N_k )
\right\}, \\[12pt]
\xi_{3k} & \equiv \frac{
	D_k  \E( N_k-D_k \mid  \bm{T}(1), \bm{T}(0), N_k )	\E\{N_{1k} (N_k - N_{1k})\mid  \bm{T}(1), \bm{T}(0), N_k\}
}{
	N_k^2 (N_k-1)
} \\
&  \quad \ 
- 
\E(D_k \mid \bm{T}(1), \bm{T}(0))  (1-h_k) \phi_k(1-\phi_k),\\[12pt]
\xi_{4k} & \equiv \E(D_k \mid \bm{T}(1), \bm{T}(0))  (1-h_k) \phi_k(1-\phi_k) - \E(V_k \mid \bm{T}(1), \bm{T}(0)).
\end{align*}
Because $D_K = N_K$ by definition, we have $V_K = \E(V_K) =  0$, and 
\begin{align}\label{eq:decompose_dev_U}
& \quad \ U - \E(U \mid \bm{T}(1), \bm{T}(0)) \\
& = \sum_{k=1}^{K-1} \left\{V_k  - \E(V_k\mid \bm{T}(1), \bm{T}(0)) \right\} 
\nonumber
\\
& = 
\sum_{k=1}^{K-1} \xi_{1k} + \sum_{k=1}^{K-1} \xi_{2k} + \sum_{k=1}^{K-1} \xi_{3k} + \sum_{k=1}^{K-1} \xi_{4k}. 
\nonumber
\end{align}
Lemmas \ref{lemma:dev_U_first}--\ref{lemma:dev_U_fourth} give upper bounds for the four terms in  \eqref{eq:decompose_dev_U}, separately. These further provide a probability upper bound for $U - \E(U \mid \bm{T}(1), \bm{T}(0))$. 

\begin{lemma}\label{lemma:martingale_CLT}
	Under Assumptions \ref{asmp:rbe} and \ref{asmp:noninformative}, 
	consider a sequence of finite populations satisfying the null hypothesis $H_0$, 
	If, conditioning on all the potential event times,  (i) the conditional Lindeberg condition holds, i.e., for all $\varepsilon >0,$
	\begin{small}
		\begin{align*}
		& \sum_{k=1}^{K} \E\left[
		\frac{\left(D_{1k} - \ED_{k}\right)^2}{\Var(L\mid \bm{T}(1), \bm{T}(0))} 
		\I\left\{
		\frac{|D_{1k}-\ED_{k}|}{\sqrt{\Var(L\mid \bm{T}(1), \bm{T}(0))}} > \varepsilon
		\right\}
		\mid 
		\bm{T}(1), \bm{T}(0), 
		\mathcal{F}_{k-1}
		\right] \convergep 0,
		\end{align*}
	\end{small}%
	and (ii) the summation of the conditional variances is consistent for its mean, i.e., 
	\begin{align*}
	\frac{\sum_{k=1}^{K}V_k}{\Var(L\mid \bm{T}(1), \bm{T}(0) )} = 
	\frac{\sum_{k=1}^{K}V_k}{\sum_{k=1}^{K}\E(V_k \mid \bm{T}(1), \bm{T}(0)  )} = \frac{U}{\E(U\mid \bm{T}(1), \bm{T}(0) )} \convergep 1,
	\end{align*}
	then the logrank statistic is asymptotically standard Gaussian, i.e., 
	$$
	U^{-1/2} L \mid \bm{T}(1), \bm{T}(0)  \converged \mathcal{N}(0,1). 
	$$
\end{lemma}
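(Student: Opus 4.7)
The plan is to apply a standard martingale central limit theorem (e.g., Corollary 3.1 in Hall and Heyde, 1980) to the martingale difference sequence $\{D_{1k} - M_k\}_{k=1}^{K}$ whose martingale property was already established in Theorem~\ref{thm:mds}. I will first prove that the properly normalized sum $L / \sqrt{\Var(L \mid \bm{T}(1), \bm{T}(0))}$ is asymptotically standard Gaussian, and then use Slutsky's theorem together with condition (ii) to transfer the normalization from the deterministic $\sqrt{\Var(L \mid \bm{T}(1), \bm{T}(0))}$ to the random $\sqrt{U}$.

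First, I will set $s_n^2 \equiv \Var(L \mid \bm{T}(1), \bm{T}(0)) = \E(U \mid \bm{T}(1), \bm{T}(0))$, where the second equality follows from Theorem~\ref{thm:mean_var_L}. Define the triangular array $X_{nk} = (D_{1k} - M_k)/s_n$ for $1 \le k \le K$. By Theorem~\ref{thm:mds}(i), $\{X_{nk}\}$ is a martingale difference array with respect to $\{\mathcal{F}_k\}$. By Theorem~\ref{thm:mds}(ii), $D_{1k}\mid \mathcal{F}_{k-1}$ is hypergeometric with parameters $(N_k, D_k, N_{1k})$, so its conditional variance is exactly $V_k$. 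Hence
\begin{align*}
\sum_{k=1}^{K} \E\bigl(X_{nk}^2 \mid \bm{T}(1), \bm{T}(0), \mathcal{F}_{k-1}\bigr) = \frac{1}{s_n^2}\sum_{k=1}^{K} V_k = \frac{U}{\E(U \mid \bm{T}(1), \bm{T}(0))},
\end{align*}
which converges in probability to $1$ by condition (ii). Combined with the conditional Lindeberg condition (i), the martingale CLT yields
\begin{align*}
\frac{L}{\sqrt{\Var(L \mid \bm{T}(1), \bm{T}(0))}} \,\Bigm|\, \bm{T}(1), \bm{T}(0) \ \converged \ \mathcal{N}(0,1).
\end{align*}

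Second, I will convert this to the statement involving the self-normalized statistic $U^{-1/2}L$. Writing
\begin{align*}
U^{-1/2} L = \left( \frac{\E(U \mid \bm{T}(1), \bm{T}(0))}{U} \right)^{1/2} \cdot \frac{L}{\sqrt{\Var(L \mid \bm{T}(1), \bm{T}(0))}},
\end{align*}
condition (ii) together with the continuous mapping theorem gives that the first factor converges in probability to $1$. An application of Slutsky's theorem then yields the desired conclusion $U^{-1/2}L \mid \bm{T}(1), \bm{T}(0) \converged \mathcal{N}(0,1)$.

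The main obstacle here is simply verifying that the standard martingale CLT applies cleanly in the conditional setup, since every statement is made conditionally on $(\bm{T}(1), \bm{T}(0))$; however, because the potential event times are treated as fixed constants, the conditioning is purely notational and the classical form of the theorem applies directly to the probability space generated by $\bm{Z}$ and $(\bm{C}(1), \bm{C}(0))$. The heavy lifting — actually \emph{establishing} conditions (i) and (ii) under Condition~\ref{cond:fp_conti} or Condition~\ref{cond:fp_discrete} — is not part of this lemma and is precisely what the subsequent Lemmas~\ref{lemma:bound_EU}--\ref{lemma:dev_U_fourth} are designed to handle.
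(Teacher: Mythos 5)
Your proposal is correct and follows essentially the same route as the paper, which simply observes that $\{D_{1k}-M_k\}_{k=1}^K$ is a martingale difference sequence (Theorem \ref{thm:mds}) and invokes the martingale central limit theorem (e.g., Corollary 3.1 of Hall and Heyde, 1980), with your additional details — identifying $V_k$ as the conditional variance via the hypergeometric law and passing from the deterministic to the random normalization by Slutsky — being exactly the implicit steps in that citation.
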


\begin{lemma}\label{lemma:bound_EU}
	Under Assumptions \ref{asmp:rbe} and \ref{asmp:noninformative}, and conditioning on all the potential event times, 
	the expectation of $U \equiv \sum_{k=1}^K V_k$ has the following lower bound: 
	\begin{align*}
	\E(U \mid \bm{T}(1), \bm{T}(0) ) & \ge 
	p_1 p_0 \left( \frac{n}{\tilde{d}+1} \frac{1}{n}\sum_{i=1}^{n}
	\tilde{G}(T_i(0)) - 1 \right)
	= p_1 p_0 \left( \frac{n\tilde{g}}{\tilde{d}+1}  - 1 \right).
	\end{align*}
\end{lemma}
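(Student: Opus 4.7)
The plan is to start from the explicit formula for $\E(V_k \mid \bm T(1), \bm T(0))$ provided by Theorem \ref{thm:mean_var_L}, and lower-bound it term by term before summing. First I would substitute the two algebraic identities $h_k(1-h_k) = d_k(n_k - d_k)/n_k^2$ and $\phi_k(1-\phi_k) \, g_k^2 = p_1 G_1(t_k) \cdot p_0 G_0(t_k) = p_1 p_0 \tilde G(t_k)$, which rewrite
\[
\E(V_k \mid \bm T(1), \bm T(0))
=\frac{d_k(n_k-d_k)}{n_k(n_k-1)} \cdot \frac{p_1 p_0 \tilde G(t_k)}{g_k^2}\cdot \bigl(n_k g_k - 1 + (1-g_k)^{n_k}\bigr).
\]

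The crux is the clean inequality $n_k g_k - 1 + (1-g_k)^{n_k} \geq (n_k-1) g_k^2$. I would prove it by expanding $1-(1-g_k)^{n_k} = g_k \sum_{j=0}^{n_k-1}(1-g_k)^j$ and observing that $n_k g_k - (1-(1-g_k)^{n_k}) = g_k \sum_{j=1}^{n_k-1}\{1-(1-g_k)^j\}$, followed by the trivial pointwise bound $1-(1-g_k)^j \geq g_k$ for $j \geq 1$. Combining with the previous step collapses the $g_k^2$ factors and gives
\[
\E(V_k \mid \bm T(1), \bm T(0)) \geq p_1 p_0 \, \tilde G(t_k) \cdot \frac{d_k(n_k-d_k)}{n_k}.
\]

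Next I would exploit $d_k \leq \tilde d$ to get $(n_k - d_k)/n_k \geq (n_k - \tilde d)/n_k$, and note that $(n_k - \tilde d)/n_k \geq 1/(\tilde d + 1)$ precisely when $n_k \geq \tilde d + 1$. Since the sequence $n_k$ is monotonically decreasing in $k$, there is a threshold $k_0 = \min\{k : n_k \leq \tilde d\}$ (set to $K+1$ if none exists) that cleanly partitions the sum. For $k < k_0$ we have $\E(V_k \mid \bm T(1), \bm T(0)) \geq p_1 p_0 \, d_k \tilde G(t_k)/(\tilde d + 1)$, and for $k \geq k_0$ we merely drop the term using $\E(V_k \mid \bm T(1), \bm T(0)) \geq 0$. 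The identity $\sum_{k \geq k_0} d_k = n_{k_0} \leq \tilde d$, together with $\tilde G \leq 1$, gives $\sum_{k \geq k_0} d_k \tilde G(t_k) \leq \tilde d$, hence $\sum_{k < k_0} d_k \tilde G(t_k) \geq n\tilde g - \tilde d$. Assembling yields $\E(U \mid \bm T(1), \bm T(0)) \geq p_1 p_0 (n \tilde g - \tilde d)/(\tilde d + 1) \geq p_1 p_0 \{n \tilde g/(\tilde d + 1) - 1\}$.

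The only real obstacle is finding the right lower bound on the bracketed factor $n_k g_k - 1 + (1-g_k)^{n_k}$: bounds like $\geq n_k g_k - 1$ or the Bernoulli-style $\geq 0$ are too weak because they do not cancel the $1/g_k^2$ factor, which can be huge when $g_k$ is small. The geometric-series identity followed by the coarse bound $1-(1-g_k)^j \geq g_k$ is what produces the critical $g_k^2$ to cancel, after which everything is bookkeeping with the monotonicity of $n_k$ and $d_k \leq \tilde d$. No probabilistic tools beyond the expectation formula of Theorem \ref{thm:mean_var_L} are required.
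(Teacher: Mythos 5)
Your proof is correct, and at the level of overall strategy it coincides with the paper's: both start from the formula for $\E(V_k\mid \bm{T}(1),\bm{T}(0))$ in Theorem \ref{thm:mean_var_L}, both hinge on the inequality $n_k g_k - 1 + (1-g_k)^{n_k} \ge (n_k-1)g_k^2$ to cancel the $g_k^{-2}$ coming from $\phi_k(1-\phi_k)=p_1p_0\tilde{G}(t_k)/g_k^2$, and both finish with $d_k\le \tilde{d}$ and the identity $\sum_k d_k\tilde{G}(t_k)=n\tilde{g}$. The two places where you diverge are minor but worth noting. For the key inequality, the paper writes $n_kg_k-1+(1-g_k)^{n_k}-(n_k-1)g_k^2=(1-g_k)\,\E\left(X-1+\I\{X=0\}\right)$ with $X\sim\Binomial(n_k-1,g_k)$ and observes the integrand is pointwise nonnegative; your geometric-series expansion with the bound $1-(1-g_k)^j\ge g_k$ is a fully elementary route to the same fact, and arguably more transparent about where the second factor of $g_k$ comes from. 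For the bookkeeping, the paper avoids your threshold $k_0$ altogether by noting that for $k\le K-1$ one has $n_k\ge d_k+1$ (since $n_k\ge d_k+d_K$), hence $1-h_k\ge 1/(\tilde{d}+1)$ uniformly, and only the single term $k=K$ (where $h_K=1$, so $V_K=0$) needs to be discarded, costing $d_K\tilde{G}(t_K)\le\tilde{d}$; your partition at $k_0=\min\{k:n_k\le\tilde{d}\}$ discards possibly more terms but recovers the identical bound $n\tilde{g}-\tilde{d}$ because $\sum_{k\ge k_0}d_k=n_{k_0}\le\tilde{d}$. Both versions are valid and yield the stated constant.
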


\begin{lemma}\label{lemma:dev_U_first}
	Under Assumptions \ref{asmp:rbe} and \ref{asmp:noninformative} and the null hypothesis $H_0$, 
	and conditioning on all the potential event times, 
	\begin{align*}
		\sum_{k=1}^{K-1} \xi_{1k} & \equiv
		\sum_{k=1}^{K-1} \frac{D_k(N_k-D_k) }{N_k^2 (N_k-1)} 
		\left[ N_{1k} (N_k - N_{1k})
		- \E\{N_{1k} (N_k - N_{1k})\mid \bm{T}(1), \bm{T}(0), N_k\}
		\right]\\
		& = \left( n\log n \right)^{1/2} O_\pr (1).
	\end{align*}
\end{lemma}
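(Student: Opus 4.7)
The plan is to establish the claim by bounding the conditional second moment $\E[(\sum_{k=1}^{K-1}\xi_{1k})^2 \mid \bm{T}(1), \bm{T}(0)] = O(n\log n)$ and applying Chebyshev's inequality. Writing $\xi_{1k} = A_k B_k$ with $A_k \equiv D_k(N_k-D_k)/\{N_k^2(N_k-1)\}$ and $B_k \equiv N_{1k}(N_k-N_{1k}) - \E\{N_{1k}(N_k-N_{1k}) \mid \bm{T}(1), \bm{T}(0), N_k\}$, Lemma \ref{lemma:joint_dist_D_N1_given_N} implies $N_{1k} \ind D_k \mid N_k$, so $A_k$ is measurable with respect to $\sigma(N_k, D_k)$ and $\E[B_k \mid N_k, D_k] = \E[B_k \mid N_k] = 0$. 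In particular, each $\xi_{1k}$ has mean zero.

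For the diagonal terms, the law of total variance gives $\E\xi_{1k}^2 = \E[A_k^2 \Var(B_k \mid N_k)]$. Since the map $x \mapsto x(N_k-x)$ is $N_k$-Lipschitz on $[0,N_k]$, we have $\Var(B_k \mid N_k) \le N_k^2 \Var(N_{1k} \mid N_k) = N_k^3 \phi_k(1-\phi_k)$, using the binomial distribution of $N_{1k}\mid N_k$ from Lemma \ref{lemma:joint_dist_D_N1_given_N}. Combined with $A_k^2 \le D_k^2(N_k-D_k)^2/\{N_k^4(N_k-1)^2\}$, this yields $\E\xi_{1k}^2 \le \phi_k(1-\phi_k)\, \E[D_k^2\, N_k/(N_k-1)^2]$, which is of order $\E D_k$ when $N_k$ is not small. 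Using $\E D_k \le d_k$ and $\sum_{k=1}^{K} d_k = n$ then gives $\sum_{k=1}^{K-1}\E\xi_{1k}^2 = O(n)$.

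The main obstacle is the cross-covariance $\sum_{j\ne k}\E[\xi_{1j}\xi_{1k}]$. The $N_{1k}$'s are positively correlated across $k$ because they share the treatment indicators $Z_i$ for units at risk at multiple times, so a naive Cauchy--Schwarz bound would lose a factor of $K$, which may be as large as $n$. I would address this by conditioning on the realized $(W_i, \Delta_i)_{i=1}^n$, under which Assumptions \ref{asmp:rbe} and \ref{asmp:noninformative} render the $\{Z_i\}$ conditionally independent, and by exploiting the nesting $N_{1k} = N_{1,k+1} + \sum_{i:\, t_k \le W_i < t_{k+1}} Z_i$ to express the $B_k$'s in terms of a reverse-time sequence of independent increments. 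A residual bias appears because $B_k$ is centered by $\E[\cdot \mid N_k]$ rather than by $\E[\cdot \mid (W_i,\Delta_i)_i]$; uniformly controlling this bias across $k$, for instance via a Bernstein or Freedman-type maximal inequality applied to the sum of independent increments, contributes the extra $(\log n)^{1/2}$ factor beyond the $\sqrt{n}$ scaling suggested by the diagonal computation. Assembling these pieces to obtain $\E[(\sum_{k=1}^{K-1}\xi_{1k})^2 \mid \bm{T}(1), \bm{T}(0)] = O(n\log n)$ and invoking Chebyshev's inequality yields the claimed $(n\log n)^{1/2} O_\pr(1)$ bound.
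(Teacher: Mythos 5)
Your handling of the diagonal terms is fine, but the proof is incomplete at exactly the point you identify as the main obstacle: the cross-covariances $\sum_{j\neq k}\E[\xi_{1j}\xi_{1k}]$ are never actually controlled. What you offer there is a plan (condition on $(W_i,\Delta_i)_i$, exploit the reverse-time nesting of $N_{1k}$, invoke a Bernstein/Freedman maximal inequality to absorb the recentering bias), not an argument. Each of these steps is nontrivial — in particular, $B_k$ is centered at $\E[\cdot\mid N_k]$ rather than at $\E[\cdot\mid (W_i,\Delta_i)_i]$, and quantifying that discrepancy uniformly in $k$ and then summing it against the weights $A_k$ is precisely where the work lies. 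As written, the proposal does not establish $\E[(\sum_k\xi_{1k})^2]=O(n\log n)$.

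The paper avoids the cross-term problem entirely, and you should note that your premise — ``a naive Cauchy--Schwarz bound would lose a factor of $K$'' — is true only for the term-by-term version $\E[\xi_{1j}\xi_{1k}]\le(\E\xi_{1j}^2\,\E\xi_{1k}^2)^{1/2}$. The paper instead bounds $|\xi_{1k}|\le D_k|\delta_{1k}|/N_k^2$ and applies a \emph{weighted} Cauchy--Schwarz to the whole sum, splitting $D_k=\sqrt{D_k}\cdot\sqrt{D_k}$:
\begin{align*}
\Bigl(\sum_{k=1}^{K-1}|\xi_{1k}|\Bigr)^2
\le \Bigl(\sum_{k=1}^{K-1}D_k\Bigr)\Bigl(\sum_{k=1}^{K-1}\frac{D_k\delta_{1k}^2}{N_k^4}\Bigr)
\le n\sum_{k=1}^{K-1}\frac{D_k\delta_{1k}^2}{N_k^4}.
\end{align*}
Using $N_{1k}\independent D_k\mid N_k$ and $\E(\delta_{1k}^2\mid N_k)\le N_k^3$, each summand has expectation at most $h_k$, and the elementary telescoping bound $\sum_{k=1}^{K-1}h_k=\sum_k (n_k-n_{k+1})/n_k\le\log n$ delivers $\E[(\sum_k\xi_{1k})^2]\le n\log n$; Markov's inequality finishes. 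This is where the $\log n$ actually comes from — a deterministic harmonic-sum bound on the hazards, not a maximal inequality. If you want to salvage your route, the cleanest fix is to abandon the diagonal/cross decomposition and adopt this global Cauchy--Schwarz step; otherwise you must carry out the independent-increments construction in full, which is considerably harder than the lemma requires.
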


\begin{lemma}\label{lemma:dev_U_second}
	Under Assumptions \ref{asmp:rbe} and \ref{asmp:noninformative} and the null hypothesis $H_0$, 
	and conditioning on all the potential event times, 
	\begin{align*}
		\sum_{k=1}^{K-1} \xi_{2k} & \equiv 
		\sum_{k=1}^{K-1} \left[ \frac{
			D_k  
			\E\{N_{1k} (N_k - N_{1k})\mid \bm{T}(1), \bm{T}(0), N_k\}
		}{
			N_k^2 (N_k-1)
		} \right.
		\\
		& %
		\quad \ 
		\left. 
		\vphantom{\frac{
				D_k  
				\E\{N_{1k} (N_k - N_{1k})\mid \bm{T}(1), \bm{T}(0), N_k\}
			}{
				N_k^2 (N_k-1)
		}}
		\qquad \ 
		\times
		\left\{(N_k-D_k)
		-
		\E(N_k-D_k \mid \bm{T}(1), \bm{T}(0), N_k )
		\right\} \right]\\
		& = \tilde{d}^{1/2} \log n \cdot O_\pr( 1 ). 
	\end{align*}
\end{lemma}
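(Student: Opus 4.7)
The plan is to bound $|\sum_{k=1}^{K-1}\xi_{2k}|$ by $\sum_{k=1}^{K-1}|\xi_{2k}|$, control the expectation of the latter, and finish with Markov's inequality. First I simplify $\xi_{2k}$. By Lemmas \ref{lemma:joint_dist_D_N1_given_N} and \ref{lemma:mean_D_N_minus_D}, $\E(D_k \mid \bm{T}(1),\bm{T}(0), N_k) = N_k h_k$ and $\E\{N_{1k}(N_k - N_{1k}) \mid \bm{T}(1),\bm{T}(0), N_k\} = N_k(N_k - 1)\phi_k(1-\phi_k)$. Substituting into the definition of $\xi_{2k}$ and writing $\epsilon_k \equiv D_k - N_k h_k$, the factor $N_k(N_k-1)$ cancels and I obtain
\begin{align*}
\xi_{2k} = -\phi_k(1-\phi_k)\cdot \frac{D_k\,\epsilon_k}{N_k}.
\end{align*}
The convention $0/0=0$ is consistent here since $N_k=0$ forces $D_k = \epsilon_k = 0$ and hence $\xi_{2k}=0$.

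Second, whenever $N_k\ge 1$, $D_k/N_k = h_k + \epsilon_k/N_k\ge 0$, so the triangle inequality together with $\phi_k(1-\phi_k)\le 1/4$ yields $|\xi_{2k}| \le \tfrac14(h_k|\epsilon_k| + \epsilon_k^2/N_k)$. By Lemma \ref{lemma:joint_dist_D_N1_given_N} the conditional variance is $\Var(D_k\mid N_k) = N_k(n_k-N_k)d_k(n_k-d_k)/(n_k^2(n_k-1))$; combined with Lemma \ref{lemma:marg_dist_D_N1_N}, which gives $\E N_k = n_k g_k$, I get for $n_k\ge 2$ that $\E\epsilon_k^2 = \E\Var(D_k\mid N_k) \le d_k n_k g_k/(n_k-1) \le 2d_k$, hence $\E|\epsilon_k|\le \sqrt{2\tilde d}$ by Jensen's inequality. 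Similarly $\E[\epsilon_k^2/N_k] = \E[\Var(D_k\mid N_k)/N_k\cdot \I(N_k\ge 1)] \le d_k(1-g_k)/(n_k-1) \le 2d_k/n_k$.

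Third, I sum over $k = 1,\ldots,K-1$. For such $k$ we have $n_{k+1}\ge 1$, so applying $1-x\le -\log x$ for $x > 0$ gives the telescoping bound
\begin{align*}
\sum_{k=1}^{K-1}\frac{d_k}{n_k} = \sum_{k=1}^{K-1}\left(1-\frac{n_{k+1}}{n_k}\right) \le \sum_{k=1}^{K-1}\log\frac{n_k}{n_{k+1}} = \log\frac{n_1}{n_K} \le \log n.
\end{align*}
Therefore $\E\sum_{k=1}^{K-1}|\xi_{2k}| \le \tfrac14\bigl(\sqrt{2\tilde d}\,\log n + 2\log n\bigr) = O(\tilde d^{1/2}\log n)$, and Markov's inequality yields $\sum_{k=1}^{K-1}\xi_{2k} = \tilde d^{1/2}\log n\cdot O_\pr(1)$. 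The main technical point is handling the potentially small $1/N_k$ factor: the decomposition $D_k/N_k = h_k + \epsilon_k/N_k$ isolates that difficulty into the term $\epsilon_k^2/N_k$, whose conditional expectation $\Var(D_k\mid N_k)/N_k$ contains a cancelling factor of $N_k$ in its numerator and is therefore bounded by $d_k/(n_k-1)$ uniformly.
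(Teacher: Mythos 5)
Your proof is correct, and it takes a genuinely different route from the paper's. The paper bounds the \emph{second} moment of the whole sum: it writes $|\xi_{2k}|\le (D_k/N_k)\,|\delta_{2k}|$ with $\delta_{2k}=\E(D_k\mid N_k)-D_k$, applies the Cauchy--Schwarz inequality to get $\bigl(\sum_k\xi_{2k}\bigr)^2\le\bigl(\sum_k D_k^2/N_k\bigr)\bigl(\sum_k\delta_{2k}^2/N_k\bigr)$, bounds the first factor deterministically by $\tilde d(\log n+1)$ via the same telescoping trick you use, bounds $\E(\delta_{2k}^2/N_k)\le h_k$, and concludes $\E[(\sum_k\xi_{2k})^2]\le\tilde d(\log n+1)\log n$ before invoking Markov on the square. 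You instead simplify $\xi_{2k}$ algebraically to $-\phi_k(1-\phi_k)D_k\epsilon_k/N_k$, split $D_k/N_k=h_k+\epsilon_k/N_k$, and control the \emph{first} absolute moment of each term, using $\E|\epsilon_k|\le\sqrt{2\tilde d}$ and the exact cancellation of $N_k$ in $\Var(D_k\mid N_k)/N_k$; the same telescoping sum $\sum_k d_k/n_k\le\log n$ then yields $\E\sum_k|\xi_{2k}|=O(\tilde d^{1/2}\log n)$ and Markov finishes. Your first-moment argument is somewhat more elementary (no Cauchy--Schwarz on the sum) and makes transparent where the $\tilde d^{1/2}$ comes from (the single factor $\E|\epsilon_k|$), at the cost of needing the explicit decomposition of $D_k/N_k$. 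One small imprecision: at $N_k=1$ the original $\xi_{2k}$ equals $0$ under the paper's $0/0=0$ convention, while your cancelled expression $-\phi_k(1-\phi_k)D_k\epsilon_k/N_k$ need not vanish there, so the claimed identity is really only an inequality $|\xi_{2k}|\le\phi_k(1-\phi_k)|D_k\epsilon_k|/N_k$; since you only ever use it as an upper bound in absolute value, nothing in the argument is affected.
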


\begin{lemma}\label{lemma:dev_U_third}
	Under Assumptions \ref{asmp:rbe} and \ref{asmp:noninformative} and the null hypothesis $H_0$, 
	and conditioning on all the potential event times,  
	\begin{align*}
		\sum_{k=1}^{K-1} \xi_{3k} & \equiv 
		\sum_{k=1}^{K-1}
		\left[ 
		\frac{
			D_k  \E( N_k-D_k \mid  \bm{T}(1), \bm{T}(0), N_k )	\E\{N_{1k} (N_k - N_{1k})\mid  \bm{T}(1), \bm{T}(0), N_k\}
		}{
			N_k^2 (N_k-1)
		} 
		\right. 
		\\
		&  \quad \ 
		\left. 
		\vphantom{
		\frac{
			D_k  \E( N_k-D_k \mid  \bm{T}(1), \bm{T}(0), N_k )	\E\{N_{1k} (N_k - N_{1k})\mid  \bm{T}(1), \bm{T}(0), N_k\}
		}{
			N_k^2 (N_k-1)
		} 
		}
		\qquad \ \ 
		- 
		\E(D_k \mid \bm{T}(1), \bm{T}(0))  (1-h_k) \phi_k(1-\phi_k) \right]\\
		& = n^{1/2} \cdot O_\pr (1).
	\end{align*}
\end{lemma}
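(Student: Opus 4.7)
\textbf{Proof proposal for Lemma \ref{lemma:dev_U_third}.} The plan is to reduce $\sum_{k=1}^{K-1}\xi_{3k}$ to a sum of independent, mean-zero, bounded random variables and then apply Chebyshev's inequality. The reduction rests on two algebraic identities already furnished by earlier lemmas, and replaces what looks like a complicated expression involving three random quantities $D_k$, $N_k$, $N_{1k}$ with a bounded deterministic multiple of a single centered random variable $D_k$.

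First I would invoke Lemma \ref{lemma:mean_D_N_minus_D} to substitute $\E\{N_{1k}(N_k-N_{1k}) \mid \bm{T}(1), \bm{T}(0), N_k\} = N_k(N_k-1)\phi_k(1-\phi_k)$, and then Lemma \ref{lemma:joint_dist_D_N1_given_N} (which gives $D_k \mid N_k \sim \Hypergeometric(n_k, d_k, N_k)$) to substitute $\E(N_k - D_k \mid \bm{T}(1), \bm{T}(0), N_k) = N_k(1-h_k)$. After these substitutions, the first term in the definition of $\xi_{3k}$ collapses to $D_k(1-h_k)\phi_k(1-\phi_k)$, and hence
$$\xi_{3k} = (1-h_k)\phi_k(1-\phi_k)\bigl\{D_k - \E(D_k \mid \bm{T}(1), \bm{T}(0))\bigr\}.$$
This is the key simplification.

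Second I would establish that $D_1, D_2, \ldots, D_K$ are mutually independent. Under Fisher's null $H_0$, for $i$ with $T_i(0)=t_k$ one has $\I(W_i \ge t_k) = \I(C_i \ge t_k)$, so $D_k$ depends only on the realized censoring times of the units in $\{i : T_i(0)=t_k\}$. These index sets are disjoint across $k$, and under Assumptions \ref{asmp:rbe} and \ref{asmp:noninformative} the realized censoring times $C_i = Z_i C_i(1) + (1-Z_i)C_i(0)$ are i.i.d.\ across units, so independence of $D_1,\ldots,D_K$ follows.

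Third, combining the above, I would bound the variance. Since $|(1-h_k)\phi_k(1-\phi_k)| \le 1$ and Lemma \ref{lemma:marg_dist_D_N1_N} gives $\Var(D_k) = d_k g_k(1-g_k) \le d_k$, independence of the $D_k$'s yields $\Var\bigl(\sum_{k=1}^{K-1}\xi_{3k}\bigr) \le \sum_{k=1}^{K-1} d_k \le n$. Because the sum has mean zero, Chebyshev's inequality immediately delivers the claimed $n^{1/2}\cdot O_\pr(1)$ bound. The only real obstacle is the algebraic collapse in the first step, which is bookkeeping rather than analysis; no martingale machinery or conditional-variance decomposition is needed here, because once the conditional expectations given $N_k$ are replaced by their explicit forms, the surviving term in $\xi_{3k}$ depends only on the centered $D_k$, whose moments and independence across $k$ are already at hand.
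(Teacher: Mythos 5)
Your overall strategy is the same as the paper's: collapse the conditional expectations via Lemmas \ref{lemma:joint_dist_D_N1_given_N} and \ref{lemma:mean_D_N_minus_D}, observe that the $D_k$'s are mutually independent because under $H_0$ each $D_k = \sum_{i: T_i(0)=t_k}\I(C_i\ge t_k)$ depends only on the realized censoring times of a disjoint block of units, bound the variance of the centered sum by $\sum_{k}d_k g_k(1-g_k)\le n$, and apply Chebyshev/Markov. That is exactly how the paper handles the dominant term.

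However, your key identity
$\xi_{3k} = (1-h_k)\phi_k(1-\phi_k)\{D_k - \E(D_k \mid \bm{T}(1),\bm{T}(0))\}$
is false on the event $\{N_k \le 1\}$, and this is a genuine gap. The paper's convention defines $0/0$ as $0$, so when $N_k\le 1$ the ratio in the first term of $\xi_{3k}$ equals $0$, whereas your collapsed expression $D_k(1-h_k)\phi_k(1-\phi_k)$ can be nonzero (e.g.\ $N_k=D_k=1$). The correct collapse is
$D_k(1-h_k)\phi_k(1-\phi_k)\I\{N_k>1\}$, which yields
\begin{align*}
\xi_{3k} = (1-h_k)\phi_k(1-\phi_k)\,\delta_{3k} \;-\; (1-h_k)\phi_k(1-\phi_k)\,D_k\I\{N_k=1\},
\end{align*}
with $\delta_{3k}=D_k-\E(D_k)$ (the event $N_k=0$ forces $D_k=0$ and so contributes nothing). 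Your argument handles only the first piece. The second piece is not mean-zero and is not covered by your variance bound; the paper controls it separately by noting $\E\{D_k\I(N_k=1)\mid N_k\}=\I(N_k=1)N_kh_k\le h_k$ and $\sum_{k=1}^{K-1}h_k\le\log n$, so that this correction is $O_\pr(\log n)=o(n^{1/2})$ by Markov's inequality. The final rate you claim is therefore still correct, but your proof needs this extra term identified and bounded; as written, the ``bookkeeping'' step you dismiss is precisely where the omission occurs.
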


\begin{lemma}\label{lemma:dev_U_fourth}
	Under Assumptions \ref{asmp:rbe} and \ref{asmp:noninformative} and the null hypothesis $H_0$, 
	and conditioning on all the potential event times,  
	\begin{align*}
	\sum_{k=1}^{K-1} \xi_{4k} & = 
	\sum_{k=1}^{K-1}\left\{
	\E(D_k \mid \bm{T}(1), \bm{T}(0))  (1-h_k) \phi_k(1-\phi_k) - \E(V_k \mid \bm{T}(1), \bm{T}(0))
	\right\} \\
	& = \log n \cdot O\left( 1 \right). 
	\end{align*}
\end{lemma}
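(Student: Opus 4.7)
The plan is to obtain $\xi_{4k}$ in closed form, bound it by a tractable expression, and then exploit a telescoping structure to get the $\log n$ rate.

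First, I would compute $\xi_{4k}$ explicitly. From Lemma \ref{lemma:marg_dist_D_N1_N}, $D_k \sim \Binomial(d_k,g_k)$, so $\E(D_k \mid \bm{T}(1),\bm{T}(0)) = d_k g_k = n_k h_k g_k$. Combined with the formula for $\E(V_k)$ derived in the proof of Theorem \ref{thm:mean_var_L} (see \eqref{eq:mean_Vk}), the expression
\begin{align*}
\xi_{4k} = n_k h_k g_k (1-h_k)\phi_k(1-\phi_k) - \frac{n_k^2}{n_k-1} h_k(1-h_k)\phi_k(1-\phi_k)\left\{g_k - \frac{1-(1-g_k)^{n_k}}{n_k}\right\}
\end{align*}
simplifies, after pulling out the common factor $h_k(1-h_k)\phi_k(1-\phi_k)$ and collecting terms over the common denominator $n_k-1$, to
\begin{align*}
\xi_{4k} = \frac{d_k(n_k-d_k)}{n_k(n_k-1)}\,\phi_k(1-\phi_k)\,(1-g_k)\bigl\{1-(1-g_k)^{n_k-1}\bigr\}.
\end{align*}

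Next I would bound each term uniformly. Since $\phi_k(1-\phi_k)\le 1/4$ and $(1-g_k)\{1-(1-g_k)^{n_k-1}\}\le 1$, and since $k\le K-1$ forces $d_k\le n_k-1$ (so that $(n_k-d_k)/(n_k-1)\le 1$), we obtain
\begin{align*}
\xi_{4k} \le \frac{d_k(n_k-d_k)}{4\,n_k(n_k-1)} \le \frac{d_k}{4\,n_k}.
\end{align*}

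Finally, I would exploit the identity $d_k = n_k - n_{k+1}$, giving $d_k/n_k = 1 - n_{k+1}/n_k$. Using the elementary inequality $1-x\le -\log x$ for $x>0$,
\begin{align*}
\sum_{k=1}^{K-1} \frac{d_k}{n_k} \le \sum_{k=1}^{K-1}\bigl(\log n_k - \log n_{k+1}\bigr) = \log n_1 - \log n_K \le \log n,
\end{align*}
since $n_1\le n$ and $n_K\ge 1$. This yields $\sum_{k=1}^{K-1}\xi_{4k}\le (\log n)/4$, which is the claimed $\log n\cdot O(1)$ bound. The main obstacle I anticipate is the algebraic simplification in step one; once the closed form is in hand, the bookkeeping is straightforward, and in particular the bound is deterministic (not probabilistic), matching the $O(1)$ rather than $O_\pr(1)$ appearing in the statement.
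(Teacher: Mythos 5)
Your proposal is correct and follows essentially the same route as the paper: compute $\E(D_k\mid\bm{T}(1),\bm{T}(0))=d_kg_k$ from Lemma \ref{lemma:marg_dist_D_N1_N}, combine with \eqref{eq:mean_Vk} to get the closed form $\xi_{4k}=\frac{d_k(n_k-d_k)}{n_k(n_k-1)}\phi_k(1-\phi_k)(1-g_k)\{1-(1-g_k)^{n_k-1}\}$, bound it by $d_k/n_k$ up to a constant, and telescope $\sum_k d_k/n_k\le\log n$ exactly as in \eqref{eq:sum_h}. The only cosmetic difference is that you retain the factor $1/4$ from $\phi_k(1-\phi_k)\le 1/4$ where the paper simply uses $\phi_k(1-\phi_k)\le 1$; note also that the closed form shows $\xi_{4k}\ge 0$, so your one-sided bound does give the two-sided $O(\log n)$ claim.
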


\subsection{Proofs of the lemmas}

\begin{proof}[Proof of Lemma \ref{lemma:martingale_CLT}]
	From Theorem \ref{thm:mds}, $\{D_{1k}-M_{k}\}_{k=1}^K$ is a martingale difference sequence with respect to filtration $\mathcal{F}_k$'s. 
	Lemma \ref{lemma:martingale_CLT} then follows immediately from the martingale central central limit theorem; see, e.g., \citet[][Theorem 2]{Brown1971} and \citet[][Corollary 3.1]{hall1980}.
\end{proof}

\begin{proof}[Proof of Lemma \ref{lemma:bound_EU}]
	From Theorem \ref{thm:mean_var_L} and the fact that $h_K=1$, 
	the mean of $U$ is
	\begin{align}\label{eq:EU_proof}
	\E(U) & = 
	\sum_{k=1}^{K-1}
	\frac{n_k}{n_k-1}  h_k (1-h_k) \phi_k (1-\phi_k) 
	\left\{
	n_k g_k - 1 + (1-g_k)^{n_k}
	\right\}.
	\end{align}
	For $1\le k \leq K$, let $X_k \sim \Binomial(n_k-1, g_k)$. We have
	\begin{align*}
	n_k g_k - 1 + (1-g_k)^{n_k} - (n_k-1) g_k^2
	& = (1-g_k)\left\{
	(n_k-1)g_k - 1 + (1-g_k)^{n_k-1}
	\right\} \\
	& = (1-g_k) \cdot \E\left(
	X - 1 + \I\{X=0\}
	\right) \\
	& \ge 0.
	\end{align*}
	Thus, $n_k g_k - 1 + (1-g_k)^{n_k} \ge (n_k-1) g_k^2$. 
	From \eqref{eq:EU_proof}, 
	we can then bound $\E(U)$ by 
	\begin{align}\label{eq:EU_proof2}
	\E(U) &  \ge \sum_{k=1}^{K-1}
	\frac{n_k}{n_k-1}  h_k (1-h_k) \phi_k (1-\phi_k) (n_k-1) g_k^2\\
	& = \sum_{k=1}^{K-1}
	n_k h_k (1-h_k) \phi_k (1-\phi_k)  g_k^2.
	\nonumber
	\end{align}
	Recall that $\tilde{d} = \max_{1\le k\le K} d_k$, and  $\tilde{G}(t) = G_1(t) G_0(t)$. 
	For $1\le k \le K-1$, we have 
	$
	h_k = d_k/n_k \le d_k/(d_k + 1) \le \tilde{d}/(\tilde{d}+1),
	$
	and $\phi_k (1-\phi_k)g_k^2 = p_1 G_1(t_k) p_0 G_0(t_k) = p_1 p_0 \tilde{G}(t_k)$. 
	From \eqref{eq:EU_proof2}, we can further bound $\E(U)$ by 
	\begin{align*}
	\E(U) & \ge 
	\sum_{k=1}^{K-1}
	n_k h_k \left(1-\frac{\tilde{d}}{\tilde{d}+1}\right) p_1 p_0 \tilde{G}(t_k)
	= 
	\frac{p_1 p_0 }{\tilde{d}+1} 
	\sum_{k=1}^{K-1}
	d_k \tilde{G}(t_k) 
	\\
	& = 
	\frac{p_1 p_0 }{\tilde{d}+1} 
	\left(
	\sum_{k=1}^{K}
	d_k \tilde{G}(t_k) - d_K \tilde{G}(t_K)
	\right)
	= 
	\frac{p_1 p_0 }{\tilde{d}+1} 
	\left(
	\sum_{i=1}^{n} \tilde{G}(T_i(0)) - d_K \tilde{G}(t_K)
	\right)
	\\
	& 
	=
	\frac{p_1 p_0 }{\tilde{d}+1} 
	\left(
	n \tilde{g} - d_K \tilde{G}(t_K)
	\right)
	\ge 
	\frac{p_1 p_0 }{\tilde{d}+1} 
	\left(
	n \tilde{g} - \tilde{d}
	\right)
	\ge 
	\frac{p_1 p_0 }{\tilde{d}+1} 
	\left(
	n \tilde{g} - \tilde{d}-1
	\right)\\
	& = 
	p_1 p_0 
	\left(
	\frac{n \tilde{g}}{\tilde{d}+1} - 1
	\right). 
	\end{align*}
	Therefore, Lemma \ref{lemma:bound_EU} holds. 
\end{proof}

\begin{proof}[Proof of Lemma \ref{lemma:dev_U_first}]
	First, we consider bounding the conditional variance of $N_{1k} (N_k - N_{1k})$ given $N_k$. 
	Let 
	$$
	\delta_{1k} \equiv  N_{1k} (N_k - N_{1k}) - \E\{N_{1k} (N_k - N_{1k})\mid N_k\}.
	$$
	It has the following equivalent forms:
	\begin{align*}%
	\delta_{1k} & = N_{1k} (N_k - N_{1k}) - \E\{N_{1k} (N_k - N_{1k})\mid N_k\} \\
	& = N_k \left\{ N_{1k} - E(N_{1k}\mid N_k) \right\} - 
	N_{1k}^2 + \E(N_{1k}^2 \mid N_k)
	\nonumber
	\\
	& = N_k \left\{ N_{1k} - E(N_{1k}\mid N_k) \right\} - 
	N_{1k}^2 + \left\{ \E(N_{1k} \mid N_k) \right\}^2 + \Var(N_{1k} \mid N_k)
	\nonumber
	\\
	& = N_k \left\{ N_{1k} - E(N_{1k}\mid N_k) \right\}
	- 
	\left\{ N_{1k} - \E(N_{1k} \mid N_k) \right\}
	\left\{ N_{1k} + \E(N_{1k} \mid N_k) \right\} 
	\nonumber
	\\
	& \quad \ + \Var(N_{1k} \mid N_k)
	\nonumber
	\\
	& = 
	\left\{ N_{1k} - E(N_{1k}\mid N_k) \right\}
	\left\{
	N_k - N_{1k} - \E(N_{1k} \mid N_k)
	\right\} + \Var(N_{1k} \mid N_k).
	\end{align*}
	Because $0 \le N_{1k} \le N_k$, we have
	$
	- N_k \le N_k - N_{1k} - \E(N_{1k} \mid N_k) \le N_k,
	$
	and 
	\begin{align*}%
	|\delta_{1k}| & \le 
	\left| \left\{ N_{1k} - E(N_{1k}\mid N_k) \right\}
	\left\{
	N_k - N_{1k} - \E(N_{1k} \mid N_k)
	\right\} \right| + 
	\Var(N_{1k} \mid N_k)
	\nonumber
	\\
	& \le
	N_k
	\left| N_{1k} - E(N_{1k}\mid N_k)  \right|
	+ 
	\Var(N_{1k} \mid N_k).
	\end{align*}
	By the Cauchy--Schwarz inequality, 
	\begin{align*}
	\delta_{1k}^2 & \le 
	2 N_k^2 
	\left\{ N_{1k} - E(N_{1k}\mid N_k)  \right\}^2
	+
	2 \left\{ \Var(N_{1k} \mid N_k ) \right\}^2.
	\end{align*}
	Taking conditional expectation given $N_k$ on both sides, 
	from Lemma \ref{lemma:joint_dist_D_N1_given_N}, 
	we have 
	\begin{align}\label{eq:bound_delta_1k}
	& \quad \ \E\left( \delta_{1k}^2 \mid N_k \right) \\
	& 
	\le 
	2 N_k^2 
	\E\left[
	\left\{ N_{1k} - E(N_{1k}\mid N_k)  \right\}^2 \mid N_k
	\right]
	+
	2 \left\{ \Var(N_{1k} \mid N_k) \right\}^2
	\nonumber
	\\
	\nonumber
	& = 2 N_k^2 
	\Var(N_{1k} \mid N_k)
	+
	2 \left\{ \Var(N_{1k} \mid N_k) \right\}^2
	\\
	& = 2 N_k^3 \phi_k (1 - \phi_k)
	+ 2 N_k^2 \phi_k^2 (1 - \phi_k)^2
	 \le 4 N_k^3 \phi_k (1 - \phi_k)
	 \le 
	 4 N_k^3 \cdot \frac{1}{4} 
	\nonumber
	\\
	& \le N_k^3.
	\nonumber
	\end{align}
	
	Second, we consider bounding the second moment of $\sum_{k=1}^{K-1}\xi_{1k}$. 
	By the definition of $\xi_{1k}$, 
	\begin{align*}
	\left| \xi_{1k} \right| & = \frac{
		D_k(N_k-D_k) |\delta_{1k}|
	}{
		N_k^2 (N_k-1)
	} \leq 
	\frac{
		D_k |\delta_{1k}|
	}{
		N_k^2 
	}.
	\end{align*}
	The Cauchy--Schwarz inequality then implies 
	\begin{align*}
	\left( \sum_{k=1}^{K-1} \xi_{1k} \right)^2 \leq 
	\left( \sum_{k=1}^{K-1} \left| \xi_{1k} \right| \right)^2 \leq 
	\left( 
	\sum_{k=1}^{K-1}
	\frac{
		D_k |\delta_{1k}|
	}{
		N_k^2 
	} \right)^2
	\leq 
	\left(
	\sum_{k=1}^{K-1} D_k
	\right)
	\left(
	\sum_{k=1}^{K-1}
	\frac{
		D_k \delta_{1k}^2
	}{
		N_k^4
	} \right).
	\end{align*}
	By definition, $D_k\le d_k$, $\sum_{k=1}^{K-1} D_k \le \sum_{k=1}^{K-1} d_k = n$, and thus
	\begin{align}\label{eq:bound_xi_1k_cauchy}
	\left( \sum_{k=1}^{K-1} \xi_{1k} \right)^2 \leq 
	n
	\sum_{k=1}^{K-1}
	\frac{
		D_k \delta_{1k}^2
	}{
		N_k^4
	}.
	\end{align}
	From Lemma \ref{lemma:joint_dist_D_N1_given_N} and \eqref{eq:bound_delta_1k}, we can know that 
	\begin{align*}
	\E\left( \frac{D_k \delta_{1k}^2}{N_k^4} \mid N_k \right) & = 
	\frac{\E( D_k \mid N_k ) \E(\delta_{1k}^2 \mid N_k )}{N_k^4}
	= \frac{ N_k h_k \E(\delta_{1k}^2 \mid N_k )}{N_k^4} \leq 
	\frac{ N_k h_k N_k^3}{N_k^4}  \\
	& = h_k. 
	\end{align*}
	By the law of iterated expectation, 
	\begin{align*}
	\E\left( \frac{D_k \delta_{1k}^2}{N_k^4}  \right) & = 
	\E\left\{
	\E\left( \frac{D_k \delta_{1k}^2}{N_k^4} \mid N_k \right)
	\right\} \leq h_k.
	\end{align*}
	\eqref{eq:bound_xi_1k_cauchy} then implies that 
	\begin{align*}
	\E\left\{ \left( \sum_{k=1}^{K-1} \xi_{1k} \right)^2 \right\} & \leq 
	n
	\sum_{k=1}^{K-1}
	\E\left(
	\frac{
		D_k \delta_{1k}^2
	}{
		N_k^4
	}
	\right) = 
	n
	\sum_{k=1}^{K-1} h_k.
	\end{align*}
	By definition and the mean value theorem, 
	$\log n_k - \log n_{k+1} \ge n_k^{-1} (n_k - n_{k+1})$, and thus
	\begin{align}\label{eq:sum_h}
	\sum_{k=1}^{K-1} h_k & = \sum_{k=1}^{K-1} \frac{d_k}{n_k} = \sum_{k=1}^{K-1} \frac{n_k - n_{k+1}}{n_k} \le 
	\sum_{k=1}^{K-1} \left\{ \log n_k - \log n_{k+1} \right\}  \\
	\nonumber
	& = \log n - \log n_{K} \\
	&  \leq \log n.
	\nonumber
	\end{align}
	Thus, 
	\begin{align*}
	\E\left\{ \left( \sum_{k=1}^{K-1} \xi_{1k} \right)^2 \right\} & \leq 
	n
	\sum_{k=1}^{K-1} h_k
	\le n \log n.
	\end{align*}
	By the Markov inequality,
	\begin{align*}
	\sum_{k=1}^{K-1} \xi_{1k} = O_\pr \left(
	\left[ \E\left\{ \left( \sum_{k=1}^{K-1} \xi_{1k} \right)^2 \right\} \right]^{1/2}
	\right) = \left( n\log n \right)^{1/2} O_\pr (1).
	\end{align*}
	Therefore, Lemma \ref{lemma:dev_U_first} holds.
\end{proof}

\begin{proof}[Proof of Lemma \ref{lemma:dev_U_second}]
	First, we consider bounding the conditional variance of $N_k - D_k$ given $N_k.$ 
	Let 
	$$
	\delta_{2k} \equiv N_k - D_k - \E(N_k - D_k \mid N_k) 
	= - D_k + \E(D_k \mid N_k).
	$$  
	From Lemma \ref{lemma:joint_dist_D_N1_given_N}, we have
	\begin{align}\label{eq:bound_delta_2k}
	\E\left(
	\delta_{2k}^2 \mid N_k
	\right) & = \Var\left(D_k \mid N_k\right) 
	= 
	\frac{d_k (n_k - d_k) N_k (n_k - N_k) }{n_k^2(n_k-1)}
	\\
	& = 
	N_k  \frac{d_k (n_k - d_k)  }{n_k(n_k-1)} \frac{(n_k - N_k)}{n_k}
	\leq N_k \frac{d_k}{n_k} 
	\nonumber
	\\
	& = N_k h_k.
	\nonumber
	\end{align}
	
	Second, we consider bounding the second moment of $\sum_{k=1}^{K-1} \xi_{2k}$. 
	By definition and from Lemma \ref{lemma:mean_D_N_minus_D}, 
	\begin{align*}
	\left| \xi_{2k} \right| & = 
	\frac{
		D_k \E\{N_{1k} (N_k - N_{1k})\mid N_k\}
	}{
		N_k^2 (N_k-1)
	} 
	\left| \delta_{2k} \right|
	\le 
	\frac{D_k}{N_k}  
	\phi_k(1-\phi_k) 
	\left| \delta_{2k} \right| \le \frac{D_k}{N_k}    
	\left| \delta_{2k} \right|.
	\end{align*}
	The Cauchy--Schwarz inequality implies that  
	\begin{align}\label{eq:xi_2k_cauchy}
	\left( \sum_{k=1}^{K-1} \xi_{2k} \right)^2 & \leq 
	\left( \sum_{k=1}^{K-1} \left| \xi_{2k} \right| \right)^2 \leq 
	\left(
	\sum_{k=1}^{K-1} 
	\frac{D_k|\delta_{2k}|}{N_k}   
	\right)^2\\
	& \leq 
	\left(
	\sum_{k=1}^{K-1} 
	\frac{D_k^2}{N_k}    
	\right)
	\left(
	\sum_{k=1}^{K-1} 
	\frac{\delta_{2k}^2}{N_k}    
	\right).
	\nonumber
	\end{align}
	By definition, $D_k \leq d_k \le \tilde{d}$, and $D_k \le N_k - N_{k+1}$. 
	By the mean value theorem,  
	$\log N_k - \log N_{k+1} \ge N_k^{-1} (N_k - N_{k+1})$ if $N_{k+1}\ge 1$, and thus
	\begin{align*}
	\sum_{k=1}^{K-1} 
	\frac{D_k^2}{N_k}    
	& \le 
	\tilde{d}
	\sum_{k=1}^{K-1} 
	\frac{D_k}{N_k}    
	\leq 
	\tilde{d}
	\sum_{k=1}^{K-1} 
	\frac{N_k - N_{k+1}}{N_k}    \\
	& \leq 
	\tilde{d}
	\sum_{k=1}^{K-1} 
	\left\{
	\I ( N_{k+1}\ge 1 )
	\left(
	\log N_k - \log N_{k+1}  
	\right)
	+ 
	\I( N_{k+1}=0, N_k \geq 1 )
	\right\} \\
	& = \tilde{d} \sum_{k=1}^{K-1} \I\{N_{k+1}\ge 1\}
	\left\{
	\log(N_k) - \log(N_{k+1}) 
	\right\}   \\
	& \quad \ + \tilde{d} 
	\sum_{k=1}^{K-1}
	\I\{N_{k+1}=0, N_k \geq 1 \} \\
	& \leq \tilde{d} \log n + \tilde{d} = \tilde{d} 
	\left(\log n + 1\right). 
	\end{align*}
	\eqref{eq:xi_2k_cauchy} then implies 
	\begin{align}\label{eq:xi_2k_bound}
	\left( \sum_{k=1}^{K-1} \xi_{2k} \right)^2  \leq 
	\left(
	\sum_{k=1}^{K-1} 
	\frac{D_k^2}{N_k}    
	\right)
	\left(
	\sum_{k=1}^{K-1} 
	\frac{\delta_{2k}^2}{N_k}    
	\right)
	\le 
	\tilde{d} 
	\left( \log n + 1 \right)
	\sum_{k=1}^{K-1} 
	\frac{\delta_{2k}^2}{N_k}.
	\end{align}
	From \eqref{eq:bound_delta_2k}, we have
	$
	\E(
	N_k^{-1}\delta_{2k}^2   \mid N_k
	) = N_k^{-1}
	\E(
	\delta_{2k}^2 \mid N_k
	)  \le h_k,
	$
	which immediately implies that 
	\begin{align*}
	\E\left(
	\frac{\delta_{2k}^2}{N_k}    
	\right) & = 
	\E\left\{
	\E\left(
	\frac{\delta_{2k}^2}{N_k} \mid N_k
	\right)\right\} \le h_k. 
	\end{align*}
	Thus, using \eqref{eq:sum_h} and \eqref{eq:xi_2k_bound}, we can bound the second moment of $\sum_{k=1}^{K-1} \xi_{2k}$ by 
	\begin{align*}
	\E\left[
	\left( \sum_{k=1}^{K-1} \xi_{2k} \right)^2
	\right] & \le 
	\tilde{d} 
	\left( \log n + 1\right)
	\sum_{k=1}^{K-1} 
	\E\left(
	\frac{\delta_{2k}^2}{N_k}    
	\right) \le 
	\tilde{d} 
	\left( \log n + 1\right)
	\sum_{k=1}^{K-1} 
	h_k
	\\
	& \leq 
	\tilde{d} 
	\left( \log n + 1 \right) \log n. 
	\end{align*}
	The Markov inequality then implies 
	\begin{align*}
	\sum_{k=1}^{K-1} \xi_{2k} = O_\pr \left(
	\left[ \E\left\{ \left( \sum_{k=1}^{K-1} \xi_{2k} \right)^2 \right\} \right]^{1/2}
	\right) = 
	\tilde{d}^{1/2} \log n \cdot
	O_\pr (1).
	\end{align*}
	Therefore, Lemma \ref{lemma:dev_U_second} holds. 
\end{proof}

\begin{proof}[Proof of Lemma \ref{lemma:dev_U_third}]
	Let 
	$$
	\delta_{3k} = D_k - \E(D_k)
	$$ 
	be the difference between $D_k$ and its mean. 
	From Lemmas \ref{lemma:joint_dist_D_N1_given_N} and  \ref{lemma:mean_D_N_minus_D}, $\xi_{3k}$ has the following equivalent forms:
	\begin{align*}
	\xi_{3k} & = 
	\frac{
		D_k  \E( N_k-D_k \mid N_k )	\E\{N_{1k} (N_k - N_{1k})\mid N_k\}
	}{
		N_k^2 (N_k-1)
	} 
	- 
	\E(D_k)  (1-h_k) \phi_k(1-\phi_k)\\
	& = D_k (1-h_k) \phi_k (1-\phi_k) \I\{N_k > 1\} - \E(D_k)  (1-h_k) \phi_k(1-\phi_k)\\
	& = D_k (1-h_k) \phi_k (1-\phi_k) - D_k (1-h_k) \phi_k (1-\phi_k)\I\{N_k \le 1\} \\
	& \quad \  - \E(D_k)  (1-h_k) \phi_k(1-\phi_k)\\
	& = (1-h_k) \phi_k (1-\phi_k) \delta_{3k} -  (1-h_k) \phi_k (1-\phi_k) D_k\I\{N_k \le 1\}. 
	\end{align*}
	Because $N_k = 0$  implies that $D_k = 0$, we have 
	$
	D_k\I\{N_k \le 1\} = D_k\I\{N_k = 1\},
	$
	and thus
	\begin{align}\label{eq:decomp_xi_3k}
	\sum_{k=1}^{K-1}\xi_{3k} & = \sum_{k=1}^{K-1} (1-h_k) \phi_k (1-\phi_k) \delta_{3k} \\
	& \quad \ -  \sum_{k=1}^{K-1}(1-h_k) \phi_k (1-\phi_k) D_k\I\{N_k = 1\}. 
	\nonumber
	\end{align}
	Below we consider the two terms in \eqref{eq:decomp_xi_3k} separately. 
	
	First, we consider the summation of $(1-h_k) \phi_k (1-\phi_k) \delta_{3k}$ in \eqref{eq:decomp_xi_3k}. Because any unit $i$ having event at time $t_k$ must satisfy $T_i(1) = T_i(0) = t_k$, 
	$\delta_{3k}$ has the following equivalent forms:
	\begin{align*}
	\delta_{3k} =  D_k - \E(D_k) = \sum_{i=1}^{n} \Delta_i\I (W_i = t_k) - \E(D_k) 
	= \sum_{i: T_i(0)=t_k} \I (C_i \ge t_k) - \E(D_k).
	\end{align*}
	Thus, 
	$\delta_{3k}$'s are mutually independent for $1\le k\le K-1$. 
	From Lemma \ref{lemma:marg_dist_D_N1_N}, 
	$
	D_k\sim \Binomial(d_k, g_k). 
	$
	Therefore, 
	$\Var(\delta_{3k}) = \Var(D_k) = d_k g_k (1-g_k)$, and 
	\begin{align*}
	& \quad \ \Var\left\{
	\sum_{k=1}^{K-1} (1-h_k) \phi_k (1-\phi_k) \delta_{3k}
	\right\}\\
	& = \sum_{k=1}^{K-1} (1-h_k)^2 \phi_k^2 (1-\phi_k) ^2 \Var(\delta_{3k}) = \sum_{k=1}^{K-1} (1-h_k)^2 \phi_k^2 (1-\phi_k)^2  d_k g_k (1-g_k) \\
	& \leq \sum_{k=1}^{K-1} d_k \le n.
	\end{align*} 
	By the Markov inequality, 
	\begin{align}\label{eq:delta_3k_first}
	& \quad \ \sum_{k=1}^{K-1} (1-h_k) \phi_k (1-\phi_k) \delta_{3k} 
	\\
	& = 
	O_\pr \left(
	\left[
	\Var\left\{
	\sum_{k=1}^{K-1} (1-h_k) \phi_k (1-\phi_k) \delta_{3k}
	\right\}
	\right]^{1/2}
	\right)
	\nonumber
	\\
	& = O_\pr \left(
	n^{1/2}
	\right). 
	\nonumber
	\end{align}
	
	Second, we consider the summation of $ (1-h_k) \phi_k (1-\phi_k) D_k\I\{N_k = 1\}$ in \eqref{eq:decomp_xi_3k}. 
	From Lemma \ref{lemma:joint_dist_D_N1_given_N}, 
	\begin{align*}
	\E\left\{ D_k\I (N_k = 1) \mid N_k \right\} = \I(N_k = 1) \cdot \E\left(
	D_k \mid N_k
	\right) =  \I (N_k = 1) \cdot N_k h_k \leq h_k.
	\end{align*}
	By the law of iterated expectation, 
	$$
	\E\left\{ D_k\I (N_k = 1) \right\}
	= 
	\E\left[ \E\left\{ D_k\I (N_k = 1) \mid N_k \right\} \right] 
	\le h_k. 
	$$
	From the linearity of expectation and \eqref{eq:sum_h},  
	\begin{align*}
	& \quad \ E\left\{
	\sum_{k=1}^{K-1}(1-h_k) \phi_k (1-\phi_k) D_k\I (N_k = 1)
	\right\}\\
	& = \sum_{k=1}^{K-1}(1-h_k) \phi_k (1-\phi_k)\E\left\{ D_k\I (N_k = 1) \right\}
	\leq \sum_{k=1}^{K-1}(1-h_k) \phi_k (1-\phi_k)h_k \\
	& \leq \sum_{k=1}^{K-1}h_k \le \log n. 
	\end{align*}
	From the Markov inequality, we have
	\begin{align}\label{eq:delta_3k_second}
	& \quad \ \sum_{k=1}^{K-1}(1-h_k) \phi_k (1-\phi_k) D_k\I (N_k = 1) 
	\\
	& =
	O_\pr \left(
	E\left\{
	\sum_{k=1}^{K-1}(1-h_k) \phi_k (1-\phi_k) D_k\I(N_k = 1)
	\right\}
	\right)
	\nonumber
	\\
	& = 
	O_\pr ( \log n ).
	\nonumber
	\end{align}
	
	From \eqref{eq:decomp_xi_3k}, \eqref{eq:delta_3k_first} and \eqref{eq:delta_3k_second}, we have 
	$
	\sum_{k=1}^{K-1}\xi_{3k}  = O_\pr \left(
	n^{1/2}
	\right),
	$
	i.e., Lemma \ref{lemma:dev_U_third} holds. 
\end{proof}

\begin{proof}[Proof of Lemma \ref{lemma:dev_U_fourth}]
	From Lemma \ref{lemma:marg_dist_D_N1_N} and  \eqref{eq:mean_Vk}, 
	for $1\le k \le K-1$, 
	\begin{align*}
	\xi_{4k} & = d_k g_k  (1-h_k) \phi_k(1-\phi_k) \\
	& \quad \ - \left\{
	n_k g_k -1 + (1-g_k)^{n_k}
	\right\} h_k(1-h_k) \phi_k (1 - \phi_k)\frac{n_k}{n_k-1}\\
	& = 
	h_k (1-h_k) \phi_k(1-\phi_k)
	\frac{n_k}{n_k-1}
	\left[
	(n_k-1) g_k  - 
	\left\{
	n_k g_k -1 + (1-g_k)^{n_k}
	\right\} 
	\right]\\
	& = h_k (1-h_k) \phi_k(1-\phi_k)
	\frac{n_k}{n_k-1}
	(1-g_k)
	\left\{
	1 - (1-g_k)^{n_k-1}
	\right\}\\
	& \le h_k (1-h_k) \frac{n_k}{n_k-1} = \frac{d_k(n_k-d_k)}{n_k(n_k-1)} \leq  \frac{d_k}{n_k}   \\
	& = h_k.
	\end{align*}
	\eqref{eq:sum_h} then implies that 
	\begin{align*}
	0 \le \sum_{k=1}^{K-1} \xi_{4k}  \leq \sum_{k=1}^{K-1} h_k \le \log n. 
	\end{align*}
	Therefore, $\sum_{k=1}^{K-1} \xi_{4k} = O\left( \log n \right)$, i.e., Lemma \ref{lemma:dev_U_fourth} holds. 
\end{proof}

\subsection{Proof of Theorem \ref{thm:CLT_conti}}

\begin{proof}[Proof of Theorem \ref{thm:CLT_conti}]
	We check the two conditions in Lemma \ref{lemma:martingale_CLT} separately. 
	First, we consider the conditional Lindeberg condition. 
	By definition, we know that $0 \le D_{1k}\le d_k \le \tilde{d}$. This immediately implies that $0 \le \ED_{k} = \E(D_{1k} \mid \mathcal{F}_{k-1}) \le \tilde{d}$, and 
	$|D_{1k} - \ED_{k}| \le \tilde{d}$. 
	From Lemma \ref{lemma:bound_EU}, for $1\leq k \leq K$, 
	we then have
	\begin{align*}
	\frac{|D_{1k}-\ED_{k}|^2}{\Var(L)} \leq \frac{\tilde{d}^2}{\E(U)}
	\le
	\frac{\tilde{d}^2}{
		p_1 p_0 \{ n\tilde{g}/(\tilde{d}+1) - 1 \}
	}
	= 
	\frac{1}{p_1 p_0} 
	\left\{
	\frac{n\tilde{g}}{\tilde{d}^2 (\tilde{d}+1)} - \frac{1}{\tilde{d}^2}
	\right\}^{-1}. 
	\end{align*}
	This then implies that
	\begin{align}\label{eq:bound_std_D}
	\max_{1\leq k\le K}\frac{|D_{1k}-\ED_{k}|^2}{\Var(L)} \leq \frac{1}{p_1 p_0} 
	\left\{
	\frac{n\tilde{g}}{\tilde{d}^2 (\tilde{d}+1)} - \frac{1}{\tilde{d}^2}
	\right\}^{-1}. 
	\end{align}
	Because
	\begin{align*}
	\frac{n\tilde{g}}{\tilde{d}^2 (\tilde{d}+1)} - \frac{1}{\tilde{d}^2}
	& \ge 
	\frac{n\tilde{g}}{\tilde{d}^2\cdot 2\tilde{d}} - 1 = \frac{1}{2}\frac{n \tilde{g}}{ \tilde{d}^3} - 1,
	\end{align*}
	under Condition \ref{cond:fp_conti}, we can know that 
	\begin{align*}
	\frac{n\tilde{g}}{\tilde{d}^2 (\tilde{d}+1)} - \frac{1}{\tilde{d}^2}
	& \converge \infty.
	\end{align*}
	From \eqref{eq:bound_std_D}, under Condition \ref{cond:fp_conti}, we have
	\begin{align*}
	\max_{1\leq k\le K}\frac{|D_{1k}-\ED_{k}|^2}{\Var(L)} \converge 0. 
	\end{align*}
	Thus, for any $\varepsilon>0$, there exists $\underline{n}$ such that when $n \ge \underline{n}$, 
	$$
	\max_{1\leq k\le K}\frac{|D_{1k}-\ED_{k}|^2}{\Var(L)} \leq \varepsilon^2.
	$$
	Consequently, when $n \ge \underline{n}$, 
	\begin{align*}
	\sum_{k=1}^{K} \E\left[
	\frac{\left(D_{1k} - \ED_{k}\right)^2}{\Var(L)} 
	\I\left\{
	\frac{|D_{1k}-\ED_{k}|}{\sqrt{\Var(L)}} > \varepsilon
	\right\}
	\mid \mathcal{F}_{k-1}
	\right] = 0.
	\end{align*}
	Therefore, the conditional Lindeberg condition holds. 
	
	Second, we consider the consistency of $U$ for $\E(U)$. Lemmas \ref{lemma:dev_U_first}--\ref{lemma:dev_U_fourth} imply 
	\begin{align}\label{eq:dev_U_order_two_terms}
	& \quad \ U - \E(U) \\
	& = \sum_{k=1}^{K-1}V_k -  \sum_{k=1}^{K-1} \E(V_k) = 
	\sum_{k=1}^{K-1}\xi_{1k} + \sum_{k=1}^{K-1}\xi_{2k} + \sum_{k=1}^{K-1}\xi_{3k} + \sum_{k=1}^{K-1}\xi_{4k} 
	\nonumber
	\\
	& =   
	\left( n\log n \right)^{1/2}
	O_\pr(1)
	+
	\tilde{d}^{1/2} \log n  \cdot
	O_\pr(1)
	+
	n^{1/2}
	O_\pr(1)
	+
	O( \log n )
	\nonumber
	\\
	& = 
	\left( n\log n \right)^{1/2}
	O_\pr (1) + 
	\tilde{d}^{1/2} \log n \cdot
	O_\pr (1).
	\nonumber
	\end{align}
	By definition, $\tilde{g} \le 1$, and thus Condition \ref{cond:fp_conti} implies that 
	\begin{align*}
	\frac{1}{\tilde{d}}
	\left(
	\frac{n}{\log n}
	\right)^{1/2} \converge \infty.
	\end{align*}
	This implies that 
	\begin{align*}
	\frac{\tilde{d}^{1/2} \log n}{\left( n\log n \right)^{1/2}}
	& = 
	\tilde{d}^{1/2} 
	\left(
	\frac{n}{\log n}
	\right)^{-1/2} 
	\leq \tilde{d}
	\left(
	\frac{n}{\log n}
	\right)^{-1/2}  \converge 0.
	\end{align*}
	Thus, from \eqref{eq:dev_U_order_two_terms}, we have
	$
	U - \E(U) = \left( n\log n \right)^{1/2}
	O_\pr (1),
	$
	and consequently, 
	\begin{align}\label{eq:dev_U_order}
	\frac{U}{\E(U)} - 1 & 
	=
	\frac{U - \E(U)}{\E(U)} = 
	\frac{
		\left( n\log n \right)^{1/2}}{\E(U)}
	O_\pr (1).
	\end{align}
	Under Condition \ref{cond:fp_conti}, from Lemma \ref{lemma:bound_EU}, we have
	\begin{align*}
	\frac{\E(U)}{
		\left( n\log n \right)^{1/2}} 
	& \ge 
	\frac{1}{
		\left( n\log n \right)^{1/2}}
	p_1 p_0 \left( \frac{n\tilde{g}}{\tilde{d}+1}  - 1 \right) \ge \frac{1}{
		\left( n\log n \right)^{1/2}} 
	p_1 p_0 \left( \frac{n\tilde{g}}{2\tilde{d}}  - 1 \right) \\
	& = p_1 p_0
	\left\{
	\frac{1}{2}
	\frac{\tilde{g}}{\tilde{d}}
	\left( \frac{n}{
		\log n} \right)^{1/2}   - \frac{1}{
		\left( n\log n \right)^{1/2}}  \right\}
	\converge \infty.
	\end{align*}
	Therefore, from \eqref{eq:dev_U_order}, we have
	\begin{align*}
	\frac{U}{\E(U)} - 1 & 
	=
	o_\pr (1),
	\end{align*}
	i.e., $U$ is consistent for $\E(U)$. 
	
	From the above and Lemma \ref{lemma:martingale_CLT}, the logrank statistic $\LR$ in \eqref{eq:logrank_randomization} is asymptotically standard Gaussian, i.e., Theorem \ref{thm:CLT_conti} holds. 
\end{proof}

\section{Asymptotic distribution of the logrank statistic with finite distinct potential event times}\label{app:asymp_finite}

\subsection{Lemmas}

To prove Theorem \ref{thm:CLT_discrete}, we need the following five lemmas. 

\begin{lemma}\label{lemma:clt_hyper}
	Let $X$ follow a hypergeometric distribution,  $\Phi(\cdot)$ be the cumulative distribution function of $\mathcal{N}(0,1)$, and $\gamma$ be the usual Berry--Esseen constant. Then
	\begin{align*}
	\sup_x \left|
	\pr\left(
	\frac{X - \E(X)}{
	\sqrt{\Var(X)}
	}
	\le x
	\right) - \Phi(x)
	\right| \le \frac{\gamma}{
		\sqrt{\Var(X)}
	}.
	\end{align*}
\end{lemma}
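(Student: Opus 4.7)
The plan is to represent $X \sim \Hypergeometric(m,k,b)$ as a sum obtained by sampling without replacement, and then invoke a Berry--Esseen-type bound tailored to such sums. Concretely, let $(\xi_1, \ldots, \xi_m)$ be a uniformly random permutation of a binary vector containing $k$ ones and $m-k$ zeros, and set $X = \sum_{i=1}^b \xi_i$; by exchangeability, $X$ has the desired hypergeometric law, with mean $\mu = bk/m$ and variance $\sigma^2 = \Var(X) = bk(m-k)(m-b)/\{m^2(m-1)\}$.

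The central step is to apply a combinatorial central limit theorem with an explicit Berry--Esseen rate, as obtained by \citet{kou1996asymptotics} and its predecessors (Hoeffding's combinatorial CLT and Bolthausen's refinement). Such a result delivers an estimate of the form
\[
\sup_x \left| \pr\left( \frac{X - \mu}{\sigma} \le x \right) - \Phi(x) \right| \le \frac{C \cdot R}{\sigma^3},
\]
where $R$ is a third-absolute-moment functional of the underlying permutation array. Because each summand $\xi_i$ takes values in $\{0,1\}$, the centered summands satisfy $|\xi_i - k/m| \le 1$, so the pointwise inequality $|x|^3 \le x^2$ for $|x|\le 1$ yields $R \le \sigma^2$, collapsing the bound to the desired order $1/\sigma = 1/\sqrt{\Var(X)}$.

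An alternative and more elementary route proceeds through a conditional Bernoulli representation. For any $p \in (0,1)$, let $Y_1, \ldots, Y_m \overset{i.i.d.}{\sim} \Bernoulli(p)$ and $S_j = \sum_{i=1}^j Y_i$; then the conditional law of $S_b$ given $S_m = k$ coincides with $\Hypergeometric(m,k,b)$. Writing the hypergeometric CDF as a ratio of the joint probability of the independent sums $S_b$ and $S_m - S_b$ to the marginal probability of $S_m = k$, one applies the classical i.i.d.\ Berry--Esseen theorem to the numerator: because the centered Bernoulli summands obey $\E|Y_i - p|^3 \le \Var(Y_i)$, the third-moment prefactor collapses exactly to the classical constant $\gamma$ divided by $\sqrt{\Var(S_b)}$.

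The main obstacle will be tracking constants so as to recover \emph{exactly} the classical Berry--Esseen $\gamma$ rather than a larger combinatorial one. The conditional-Bernoulli route is attractive because it reduces the problem to the i.i.d.\ case, but it requires careful control of the local CLT error for the denominator $\pr(S_m = k)$. Choosing $p = k/m$ places the conditioning event at the mode of $S_m$, where this error is tightest, and applying Esseen's smoothing inequality to the characteristic function of $X$---which factors through those of $S_b$ and $S_m - S_b$---should absorb the lower-order error and leave $\gamma$ as the leading prefactor.
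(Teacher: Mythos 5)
Your first route is exactly the paper's proof: Lemma~\ref{lemma:clt_hyper} is established by a direct appeal to Theorem 2.3 of \citet{kou1996asymptotics}, which states precisely this bound with the classical Berry--Esseen constant. The ``main obstacle'' you flag (recovering $\gamma$ rather than a larger combinatorial constant) is already resolved in that reference, because a hypergeometric variable can be represented as a sum of \emph{independent}, non-identically distributed Bernoulli variables (its generating polynomial has only real roots), so the classical non-i.i.d.\ Berry--Esseen theorem applies with $\sum_i \E|Y_i-p_i|^3 \le \sum_i p_i(1-p_i) = \Var(X)$ and gives $\gamma/\sqrt{\Var(X)}$ directly; this makes your second, conditional-Bernoulli ratio route---whose local-CLT error control you leave unresolved---unnecessary.
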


\begin{lemma}\label{lemma:bound_X_n_minus_X_recip}
	For any nonnegative integers $N, K, n$, and constant $p\in (0,1)$, 
	let $X$ follow a hypergeometric distribution with parameters $(N, K, n)$, and $Y$ follow a binomial distribution with parameters $(n, p)$. Then 
	\begin{align*}
	\E\left\{
	\frac{\I(0<X<n)}{X(n-X)}
	\right\} & \le \frac{4}{(K+1)(N-K+1)} 
	\frac{(N+1)(N+2)}{(n+1)(n+2)},\\
	\E\left\{
	\frac{\I(0<Y<n)}{Y(n-Y)}
	\right\} & \le \frac{4}{p(1-p)(n+1)(n+2)},
	\end{align*}
	and 
	\begin{align*}
	\E\left(
	\frac{1}{Y+1}
	\right)
	= 
	\frac{1}{(n+1)p}\left\{
	1 - 
	(1-p)^{n+1}
	\right\}.
	\end{align*}
\end{lemma}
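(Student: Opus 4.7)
\textbf{Proof proposal for Lemma \ref{lemma:bound_X_n_minus_X_recip}.}

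The unifying idea is the elementary inequality $1/a \le 2/(a+1)$, valid for any integer $a\ge 1$. Applying it twice gives
\begin{align*}
\frac{\I(0<X<n)}{X(n-X)} \le \frac{4}{(X+1)(n-X+1)},\qquad \frac{\I(0<Y<n)}{Y(n-Y)} \le \frac{4}{(Y+1)(n-Y+1)}.
\end{align*}
So the whole task reduces to computing (or upper-bounding) the expectations of $1/[(X+1)(n-X+1)]$, $1/[(Y+1)(n-Y+1)]$, and $1/(Y+1)$, and each is tractable via standard shift identities for binomial coefficients.

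For the third (equality) part, I would use the identity $\binom{n}{y}/(y+1)=\binom{n+1}{y+1}/(n+1)$ to rewrite
\begin{align*}
\E\Bigl(\frac{1}{Y+1}\Bigr) = \frac{1}{(n+1)p}\sum_{j=1}^{n+1}\binom{n+1}{j}p^j(1-p)^{n+1-j} = \frac{1-(1-p)^{n+1}}{(n+1)p},
\end{align*}
which is the claimed expression.

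For the binomial inequality, I would use the twin identity $\binom{n}{y}/[(y+1)(n-y+1)] = \binom{n+2}{y+1}/[(n+1)(n+2)]$. Factoring out $1/[p(1-p)]$, the resulting sum becomes $\sum_{j=1}^{n+1}\binom{n+2}{j}p^{j}(1-p)^{n+2-j}$, a partial sum of $\Binomial(n+2,p)$ probabilities and hence at most $1$. This yields the stated bound $4/[p(1-p)(n+1)(n+2)]$.

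For the hypergeometric inequality, the analogous identities $\binom{K}{x}/(x+1) = \binom{K+1}{x+1}/(K+1)$ and $\binom{N-K}{n-x}/(n-x+1) = \binom{N-K+1}{n-x+1}/(N-K+1)$ reduce the sum to a partial Vandermonde convolution $\sum_{j=1}^{n+1}\binom{K+1}{j}\binom{N-K+1}{n+2-j}$. Extending the sum to all $j\in\{0,\dots,n+2\}$ (adding non-negative terms) and invoking Vandermonde's identity $\sum_j\binom{K+1}{j}\binom{N-K+1}{n+2-j}=\binom{N+2}{n+2}$ gives the upper bound, and the combinatorial ratio $\binom{N+2}{n+2}/\binom{N}{n} = (N+1)(N+2)/[(n+1)(n+2)]$ produces the desired form. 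The proof is essentially routine algebra; the only subtlety is checking degenerate cases (e.g., $K=0$ or $K=N$, or $Y$ concentrated at $0$ or $n$), where the indicator $\I(0<X<n)$ or $\I(0<Y<n)$ makes the claims hold trivially.
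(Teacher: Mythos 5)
Your proposal is correct and follows essentially the same route as the paper's proof: the bound $1/a\le 2/(a+1)$ for integers $a\ge 1$, the binomial-coefficient shift identities, and bounding the resulting partial sum by the total mass of a $\Hypergeometric(N+2,K+1,n+2)$ (equivalently, Vandermonde's identity) or $\Binomial(n+2,p)$ distribution. No gaps; the degenerate cases are handled by the indicators exactly as you note.
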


\begin{lemma}\label{lemma:bound_mean_Vk_inv}
	Under Assumptions \ref{asmp:rbe} and \ref{asmp:noninformative} and the null hypothesis $H_0$, 
	and conditioning on all the potential event times,  
	for $1\le k \le K$,
	\begin{align*}
	\E\left( \frac{\I(V_k>0)}{V_k} \mid \bm{T}(1), \bm{T}(0) \right) & \le 
	\frac{16}{g_k \phi_k(1-\phi_k)} 
	\frac{(1+2n_k^{-1})}{(h_k+n_k^{-1})(1-h_k+n_k^{-1})} \frac{1}{n_k}. 
	\end{align*}
\end{lemma}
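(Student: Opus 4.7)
The plan is to exploit the conditional independence of $D_k$ and $N_{1k}$ given $N_k$ established in Lemma \ref{lemma:joint_dist_D_N1_given_N}, together with the moment bounds in Lemma \ref{lemma:bound_X_n_minus_X_recip}. Writing $V_k = D_k(N_k-D_k)N_{1k}(N_k-N_{1k})/\{N_k^2(N_k-1)\}$, the event $\{V_k>0\}$ is exactly $\{0<D_k<N_k,\ 0<N_{1k}<N_k\}$, so
\[
\E\!\left(\tfrac{\I(V_k>0)}{V_k}\,\Big|\,N_k\right)
= N_k^2(N_k-1)\,\E\!\left(\tfrac{\I(0<D_k<N_k)}{D_k(N_k-D_k)}\,\Big|\,N_k\right)\E\!\left(\tfrac{\I(0<N_{1k}<N_k)}{N_{1k}(N_k-N_{1k})}\,\Big|\,N_k\right).
\]

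First I would apply Lemma \ref{lemma:bound_X_n_minus_X_recip} to the two conditional factors. Since $D_k\mid N_k\sim\Hypergeometric(n_k,d_k,N_k)$, the hypergeometric bound gives $\E\{\I(0<D_k<N_k)/[D_k(N_k-D_k)]\mid N_k\}\le 4(n_k+1)(n_k+2)/\{(d_k+1)(n_k-d_k+1)(N_k+1)(N_k+2)\}$. Since $N_{1k}\mid N_k\sim\Binomial(N_k,\phi_k)$, the binomial bound gives $\E\{\I(0<N_{1k}<N_k)/[N_{1k}(N_k-N_{1k})]\mid N_k\}\le 4/\{\phi_k(1-\phi_k)(N_k+1)(N_k+2)\}$. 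Multiplying these and the prefactor yields
\[
\E\!\left(\tfrac{\I(V_k>0)}{V_k}\,\Big|\,N_k\right)
\le \frac{16(n_k+1)(n_k+2)}{(d_k+1)(n_k-d_k+1)\phi_k(1-\phi_k)}\cdot\frac{N_k^2(N_k-1)}{(N_k+1)^2(N_k+2)^2}.
\]

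Next I would use the elementary inequality $N_k^2(N_k-1)\le N_k(N_k+1)(N_k+2)$, which reduces the last factor to $N_k/\{(N_k+1)(N_k+2)\}\le 1/(N_k+1)$. Taking expectations with $N_k\sim\Binomial(n_k,g_k)$ from Lemma \ref{lemma:marg_dist_D_N1_N} and invoking the third statement of Lemma \ref{lemma:bound_X_n_minus_X_recip}, I obtain $\E\{1/(N_k+1)\}\le 1/\{(n_k+1)g_k\}$. Combining gives
\[
\E\!\left(\tfrac{\I(V_k>0)}{V_k}\right)\le\frac{16(n_k+2)}{g_k\phi_k(1-\phi_k)(d_k+1)(n_k-d_k+1)}.
\]

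Finally I would rewrite the right-hand side in the target form. Using $d_k+1=n_k(h_k+n_k^{-1})$, $n_k-d_k+1=n_k(1-h_k+n_k^{-1})$, and $n_k+2=n_k(1+2n_k^{-1})$, the denominator factor becomes $n_k^2(h_k+n_k^{-1})(1-h_k+n_k^{-1})$ and the numerator factor becomes $n_k(1+2n_k^{-1})$, so the ratio equals $(1+2n_k^{-1})/\{n_k(h_k+n_k^{-1})(1-h_k+n_k^{-1})\}$, which matches the stated bound. The steps are all routine; the only substantive choice is the bound $N_k^2(N_k-1)/\{(N_k+1)^2(N_k+2)^2\}\le 1/(N_k+1)$, chosen precisely so that Lemma \ref{lemma:bound_X_n_minus_X_recip}'s clean formula for $\E\{1/(N_k+1)\}$ can be invoked directly.
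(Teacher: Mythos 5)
Your proposal is correct and follows essentially the same route as the paper's own proof: the same factorization of $\I(V_k>0)/V_k$ using the conditional independence of $D_k$ and $N_{1k}$ given $N_k$, the same hypergeometric and binomial bounds from Lemma \ref{lemma:bound_X_n_minus_X_recip}, the same reduction of $N_k^2(N_k-1)/\{(N_k+1)^2(N_k+2)^2\}$ to $1/(N_k+1)$, and the same final expectation over $N_k\sim\Binomial(n_k,g_k)$ followed by the algebraic rewriting in terms of $h_k$ and $n_k^{-1}$.
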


\begin{lemma}\label{lemma:prob_limit_Vk}
	Under Assumptions \ref{asmp:rbe} and \ref{asmp:noninformative}, the null hypothesis $H_0$, and Condition \ref{cond:fp_discrete}, 
	and conditioning on all the potential event times, 
	for $1\le k \le K$, 
	\begin{itemize}
		\item[(i)] if the limits of $d_k/n$, $G_1(t_k)$ and $G_0(t_k)$ are all positive, 
		and the limit of $h_k$ is less than $1$, 
		then 
		\begin{align*}
		\frac{\E(V_k \mid \bm{T}(1), \bm{T}(0))}{n} = g_k \phi_k (1-\phi_k) 
		\frac{n_k}{n}
		h_k (1-h_k) + o(1), 
		\end{align*}
		and 
		\begin{align*}
		\frac{V_k}{n}  
		= 
		\frac{\E( V_k \mid \bm{T}(1), \bm{T}(0) )}{n} 
		+ o_\pr (1).
		\end{align*}
		\item[(ii)] otherwise, i.e., at least one of the limits of $d_k/n$, $G_1(t_k)$ and $G_0(t_k)$ are zero, or these three limits are all positive but the limit of $h_k$ is $1$, 
		\begin{align*}
		\frac{\E(V_k \mid \bm{T}(1), \bm{T}(0) )}{n} = o(1), 
		\ \text{ and } \ 
		\frac{V_k}{n}  = o_\pr (1). 
		\end{align*}
	\end{itemize}
\end{lemma}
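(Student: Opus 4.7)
Both parts start from the exact formula
\[
\E(V_k\mid \bm{T}(1),\bm{T}(0)) = \frac{n_k^2}{n_k-1}\,h_k(1-h_k)\,\phi_k(1-\phi_k)\Bigl\{g_k-\frac{1-(1-g_k)^{n_k}}{n_k}\Bigr\}
\]
from Theorem~\ref{thm:mean_var_L}, together with the distributional facts $D_k\sim\Binomial(d_k,g_k)$, $N_k\sim\Binomial(n_k,g_k)$, $N_{1k}\sim\Binomial(n_k,g_k\phi_k)$, $D_k\mid N_k\sim\Hypergeometric(n_k,d_k,N_k)$, and $N_{1k}\mid N_k\sim\Binomial(N_k,\phi_k)$ (Lemmas~\ref{lemma:joint_dist_D_N1_given_N}--\ref{lemma:marg_dist_D_N1_N}).

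For part (i), the hypotheses $\lim d_k/n>0$, $\lim G_z(t_k)>0$ for $z=0,1$, and $\lim h_k<1$ together force $d_k\to\infty$, $n_k\to\infty$, and $n_k-d_k\to\infty$, while keeping $g_k$ and $\phi_k(1-\phi_k)$ bounded away from zero. The mean expansion is then immediate: $n_k/(n_k-1)=1+o(1)$ and the braced factor equals $g_k+O(n_k^{-1})$, so $\E(V_k)/n = g_k\phi_k(1-\phi_k)(n_k/n)h_k(1-h_k)+o(1)$. For the probability statement, I would apply Chebyshev to each of the binomials to get $D_k/d_k-g_k = o_\pr(1)$, $N_k/n_k-g_k = o_\pr(1)$, and $N_{1k}/n_k-g_k\phi_k = o_\pr(1)$; on the event $\{N_k\ge 2\}$ (whose complement has vanishing probability since $n_k g_k\to\infty$) I then write
\[
V_k = \frac{D_k}{N_k}\cdot\frac{N_k-D_k}{N_k-1}\cdot\frac{N_k-N_{1k}}{N_k}\cdot N_{1k},
\]
so the first three factors converge in probability to $h_k+o(1)$, $1-h_k+o(1)$, $1-\phi_k+o(1)$, and $N_{1k}/n=g_k\phi_k(n_k/n)+o_\pr(1)$. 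Multiplying gives $V_k/n = g_k\phi_k(1-\phi_k)(n_k/n)h_k(1-h_k)+o_\pr(1) = \E(V_k\mid\bm{T}(1),\bm{T}(0))/n+o_\pr(1)$, as desired.

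For part (ii), since $V_k\ge 0$, Markov's inequality reduces $V_k/n\convergep 0$ to $\E(V_k)/n\to 0$, so all of the work is in the mean bound. Using $n_k^2/(n_k-1)\le 2n_k$ for $n_k\ge 2$ and $\{g_k-(1-(1-g_k)^{n_k})/n_k\}\le g_k$ in the exact formula (and noting the degenerate case $n_k\le 1$ contributes $0$), I obtain
\[
\frac{\E(V_k\mid\bm{T}(1),\bm{T}(0))}{n}\le \frac{2n_kg_k}{n}h_k(1-h_k)\phi_k(1-\phi_k) = \frac{2d_k}{n}(1-h_k)\cdot\frac{p_1p_0G_1(t_k)G_0(t_k)}{g_k}.
\]
I then case-split on which hypothesis of part~(i) fails. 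If $\lim d_k/n=0$, the factor $d_k/n\to 0$ while the remaining factors are bounded by $1$. If instead $\lim G_1(t_k)=0$ (symmetrically for $G_0$), then $G_1(t_k)G_0(t_k)/g_k\le G_1(t_k)\to 0$ since $g_k\ge p_1G_1(t_k)$, and the rest is bounded. Finally, if the three positivity limits hold but $\lim h_k=1$, then $1-h_k\to 0$ closes the bound.

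The main obstacle is bookkeeping: in part (i), making sure continuous-mapping-style manipulations do not divide by small $N_k$, which I handle by restricting to $\{N_k\ge 2\}$ and noting its complement has vanishing probability; in part (ii), correctly identifying which of $d_k/n$, $G_1(t_k)$, $G_0(t_k)$, $1-h_k$ goes to zero in each sub-case so that the upper bound on $\E(V_k)/n$ actually vanishes. Neither obstacle is deep, but the exhaustive sub-case analysis in (ii) needs care.
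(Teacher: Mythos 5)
Your proof is correct and follows essentially the same route as the paper's: the same exact mean formula from Theorem \ref{thm:mean_var_L} and binomial/hypergeometric concentration for part (i) (the paper normalizes the numerator and denominator of $V_k$ by powers of $n$ where you factor $V_k$ into four ratios, a cosmetic difference), and the same upper bound $2(d_k/n)(1-h_k)\,p_1p_0G_1(t_k)G_0(t_k)/g_k$ with the same case split plus Markov's inequality for part (ii). One tiny slip: in the sub-case $\lim G_1(t_k)=0$ you should invoke $g_k\ge p_0G_0(t_k)$, which gives $p_1p_0G_1(t_k)G_0(t_k)/g_k\le p_1G_1(t_k)\to 0$, rather than $g_k\ge p_1G_1(t_k)$, which only yields the non-vanishing bound $p_0G_0(t_k)$; swapping the two bounds between the two symmetric sub-cases fixes it.
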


\begin{lemma}\label{lemma:bound_mean_Vk_inv_all}
	Under Assumptions \ref{asmp:rbe} and \ref{asmp:noninformative}, the null hypothesis $H_0$, and Condition \ref{cond:fp_discrete}, 
	and conditioning on all the potential event times, 
	define a subset of $\{1,2,\ldots, K\}$ as 
	\begin{align*}
	\mathcal{K}_0 & = \left\{k_1, \ldots, k_J \right\} \\
	& \equiv 
	\left\{k: \lim_{n\rightarrow \infty}\frac{d_k}{n} > 0,  
	\lim_{n\rightarrow \infty} h_k < 1
	\lim_{n\rightarrow \infty}G_1(t_k) > 0,  \lim_{n\rightarrow \infty} G_0(t_k) > 0 \right\},
	\end{align*}
	where $J\ge 1$ is the cardinality of $\mathcal{K}_0$. 
	For $1\leq k \leq K$, we have
	\begin{itemize}
		\item[(i)] if $k \in \mathcal{K}_0$, then 
		\begin{align*}
		\lim_{n\rightarrow \infty}\frac{\E(V_k \mid \bm{T}(1), \bm{T}(0) )}{n} & > 0, 
		&
		\frac{V_k}{n} = \frac{\E(V_k \mid \bm{T}(1), \bm{T}(0) )}{n} + o_\pr(1), 
		\\ 
		\E\left( \frac{\I(V_k>0)}{V_k^{1/2}} \mid \bm{T}(1), \bm{T}(0) \right) & = o(1),
		& 
		\pr(V_k = 0 \mid \bm{T}(1), \bm{T}(0) ) = o(1);
		\end{align*}
		\item[(ii)] otherwise, 
		\begin{align*}
		\frac{\E( V_n \mid \bm{T}(1), \bm{T}(0) )}{n} = o(1), \quad
		\frac{V_n}{n}  = o_\pr(1).
		\end{align*}
	\end{itemize}
\end{lemma}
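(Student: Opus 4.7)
The plan is to deduce Lemma A5 from Lemmas A3 and A4, together with elementary positivity checks on the limiting prefactors guaranteed by Condition \ref{cond:fp_discrete}. Most of the work is already done by those two lemmas; the role of Lemma A5 is essentially to repackage them in a form convenient for the forthcoming CLT argument.

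For part (ii), namely $k \notin \mathcal{K}_0$, the two conclusions $\E(V_k\mid \bm{T}(1),\bm{T}(0))/n = o(1)$ and $V_k/n = o_\pr(1)$ are literally Lemma A4(ii), so nothing new is needed. For part (i), namely $k \in \mathcal{K}_0$, I would first invoke Lemma A4(i) to write
\[
\frac{\E(V_k\mid \bm{T}(1),\bm{T}(0))}{n} = g_k\,\phi_k(1-\phi_k)\,\frac{n_k}{n}\,h_k(1-h_k) + o(1),
\]
and then verify that each factor has a strictly positive limit: $g_k = p_1 G_1(t_k) + p_0 G_0(t_k)$ has a positive limit by Condition \ref{cond:fp_discrete}(i) and the definition of $\mathcal{K}_0$; $\phi_k(1-\phi_k)$ has a positive limit for the same reason; $n_k/n \ge d_k/n$ has a positive limit; and $h_k(1-h_k)$ has a positive limit because $d_k/n$ and $n_k/n$ both have positive limits (giving a positive limit for $h_k$) while $\lim h_k < 1$. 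This yields $\lim \E(V_k\mid \bm{T}(1),\bm{T}(0))/n > 0$, and the second display of (i) then follows immediately from the second display of Lemma A4(i).

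The third display $\E(\I(V_k>0)/V_k^{1/2}\mid \bm{T}(1),\bm{T}(0)) = o(1)$ is where I would use Lemma A3. By Jensen's inequality applied to the concave function $\sqrt{\cdot}$,
\[
\E\!\left(\frac{\I(V_k>0)}{V_k^{1/2}} \,\Big|\, \bm{T}(1),\bm{T}(0)\right) \le \left\{\E\!\left(\frac{\I(V_k>0)}{V_k} \,\Big|\, \bm{T}(1),\bm{T}(0)\right)\right\}^{1/2}.
\]
Rewriting Lemma A3's bound using $n_k(h_k + n_k^{-1}) = d_k + 1$ gives
\[
\E\!\left(\frac{\I(V_k>0)}{V_k} \,\Big|\, \bm{T}(1),\bm{T}(0)\right) \le \frac{16(1 + 2 n_k^{-1})}{g_k\,\phi_k(1-\phi_k)\,(d_k+1)(1 - h_k + n_k^{-1})},
\]
which is $O(n^{-1})$ because for $k \in \mathcal{K}_0$ we have $d_k \to \infty$ (of order $n$), $1 - h_k + n_k^{-1}$ is bounded below, and $g_k\,\phi_k(1-\phi_k)$ is bounded below. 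Taking square roots gives the $o(1)$ claim. For the last display, since $V_k/n$ converges in probability to $v := \lim \E(V_k\mid \bm{T}(1),\bm{T}(0))/n > 0$, for large $n$ the event $\{V_k = 0\}$ is contained in $\{|V_k/n - \E(V_k\mid\bm{T}(1),\bm{T}(0))/n| \ge v/2\}$, whose probability tends to zero.

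I do not anticipate any real obstacle: the only place where one must be even slightly careful is the possibility that $h_k \to 0$ for some $k \in \mathcal{K}_0$, which would make the naive factor $h_k + n_k^{-1}$ in Lemma A3 vanish; the fix, already present above, is to combine $n_k$ with $h_k + n_k^{-1}$ to get $d_k + 1 \to \infty$, after which the bound is obviously $O(1/n)$.
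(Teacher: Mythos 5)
Your proposal is correct and follows essentially the same route as the paper's proof: part (ii) is read off from Lemma \ref{lemma:prob_limit_Vk}(ii), part (i) combines Lemma \ref{lemma:prob_limit_Vk}(i) with positivity of the limiting prefactors, the bound on $\E\{\I(V_k>0)V_k^{-1/2}\}$ comes from Lemma \ref{lemma:bound_mean_Vk_inv} via $\E(X^{1/2})\le\{\E(X)\}^{1/2}$, and $\pr(V_k=0)=o(1)$ follows from the convergence $V_k/n\convergep\omega_k>0$. Your extra care about $h_k\to 0$ (rewriting $n_k(h_k+n_k^{-1})=d_k+1$) is harmless but unnecessary, since for $k\in\mathcal{K}_0$ one has $h_k\ge d_k/n$, which is bounded away from zero.
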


\subsection{Proofs of the lemmas}

\begin{proof}[Proof of Lemma \ref{lemma:clt_hyper}]
	Lemma \ref{lemma:clt_hyper} follows directly from \citet[][Theorem 2.3]{kou1996asymptotics}.
\end{proof}

\begin{proof}[Proof of Lemma \ref{lemma:bound_X_n_minus_X_recip}]
	First, for $1\le x \le n-1$, we can verify that $1/x \leq 2/(x+1)$ and $1/(n-x) \leq 2/(n-x+1)$. This implies that
	\begin{align}\label{eq:bound_X_n_minus_X_recip1}
	& \quad \ \E\left\{
	\frac{\I(0<X<n)}{X(n-X)}
	\right\} \\
	& = 
	\sum_{x=\max\{1, n+K-N\}}^{\min \{n-1, K\}}
	\frac{1}{x(n-x)} 
	\frac{\binom{K}{x}\binom{N-K}{n-x}}{\binom{N}{n}} 
	\nonumber
	\\
	& \leq 
	\sum_{x=\max\{1, n+K-N\}}^{\min \{n-1, K\}}
	\frac{4}{(x+1)(n-x+1)} 
	\frac{\binom{K}{x}\binom{N-K}{n-x}}{\binom{N}{n}}. 
	\nonumber
	\end{align}
	Because 
	\begin{align*}
		& \quad \ 
		\frac{4}{(x+1)(n-x+1)} 
		\frac{\binom{K}{x}\binom{N-K}{n-x}}{\binom{N}{n}} \\
		& = 
		\frac{1}{\binom{N}{n}}
		\frac{4}{(x+1)(n-x+1)} 
		\frac{K!}{x!(K-x)!}
		\frac{(N-K)!}{(n-x)! (N-K-n+x)!}\\
		& = 
		\frac{1}{\binom{N}{n}}
		\frac{4}{(K+1)(N-K+1)} 
		\frac{(K+1)!}{(x+1)!(K-x)!}
		\frac{(N-K+1)!}{(n-x+1)! (N-K-n+x)!}\\
		& = 
		\frac{4}{(K+1)(N-K+1)} \frac{\binom{N+2}{n+2}}{\binom{N}{n}}
		\frac{\binom{K+1}{x+1}
			\binom{N-K+1}{n-x+1}}{
			\binom{N+2}{n+2}
		},
	\end{align*}
	from \eqref{eq:bound_X_n_minus_X_recip1}, we have 
	\begin{align*}
	& \quad \ \E\left\{
	\frac{\I(0<X<n)}{X(n-X)}
	\right\} \\
	& \le 
	\frac{4}{(K+1)(N-K+1)} \frac{\binom{N+2}{n+2}}{\binom{N}{n}}\sum_{x=\max\{1, n+K-N\}}^{\min \{n-1, K\}}
	\frac{\binom{K+1}{x+1}
		\binom{N-K+1}{n-x+1}}{
		\binom{N+2}{n+2}
	}\\
	& \leq \frac{4}{(K+1)(N-K+1)} \frac{\binom{N+2}{n+2}}{\binom{N}{n}}
	\\
	& = 
	\frac{4}{(K+1)(N-K+1)} 
	\frac{(N+1)(N+2)}{(n+1)(n+2)}. 
	\end{align*}

	Second, similarly, for $1\le y\le n-1$, we have $1/y \leq 2/(y+1)$ and $1/(n-y) \leq 2/(n-y+1)$. Thus,
	\begin{align*}
	\E\left\{
	\frac{\I(0<Y<n)}{Y(n-Y)}
	\right\} & = 
	\sum_{y=1}^{n-1} 
	\frac{1}{y(n-y)} 
	\binom{n}{y} p^y (1-p)^{n-y}
	\\
	& \le 
	\sum_{y=1}^{n-1} 
	\frac{4}{(y+1)(n-y+1)} 
	\binom{n}{y} p^y (1-p)^{n-y}. 
	\end{align*}
	Because 
	\begin{align}\label{eq:bound_X_n_minus_X_recip2}
	& \quad \ 
	\frac{4}{(y+1)(n-y+1)} 
	\binom{n}{y} p^y (1-p)^{n-y}\\
	& = 
	\frac{4}{(y+1)(n-y+1)} 
	\frac{n!}{y! (n-y)!} p^y (1-p)^{n-y}
	\nonumber
	\\
	& = 
	\frac{4n!}{p(1-p)(n+2)!} 
	\frac{(n+2)!}{(y+1)! (n-y+1)!} p^{y+1} (1-p)^{n-y+1}
	\nonumber
	\\
	& = 
	\frac{4}{p(1-p)(n+1)(n+2)} 
	\binom{n+2}{y+1} p^{y+1} (1-p)^{n-y+1},
	\nonumber
	\end{align}
	from \eqref{eq:bound_X_n_minus_X_recip2}, we have 
	\begin{align*}
	\E\left\{
	\frac{\I(0<Y<n)}{Y(n-Y)}
	\right\} & \le 
	\frac{4}{p(1-p)(n+1)(n+2)} \sum_{y=1}^{n-1} 
	\binom{n+2}{y+1} p^{y+1} (1-p)^{n-y+1}\\
	& \le 
	\frac{4}{p(1-p)(n+1)(n+2)}. 
	\end{align*}
	
	Third, by definition, we have
	\begin{align*}
	\E\left(
	\frac{1}{Y+1}
	\right)
	& = 
	\sum_{y=0}^n \frac{1}{y+1} \binom{n}{y} p^y (1-p)^{n-y}
	= 
	\sum_{y=0}^n \frac{1}{y+1}
	\frac{n!}{y!(n-y)!}
	p^y (1-p)^{n-y}\\
	& = 
	\frac{1}{(n+1)p} \sum_{y=0}^n 
	\frac{(n+1)!}{(y+1)!(n-y)!}
	p^{y+1} (1-p)^{n-y}\\
	& = 
	\frac{1}{(n+1)p} \sum_{y=0}^n 
	\binom{n+1}{y+1}
	p^{y+1} (1-p)^{n-y}
	\\
	& =
	\frac{1}{(n+1)p} \sum_{x=1}^{n+1} 
	\binom{n+1}{x}
	p^{x} (1-p)^{n+1-x}
	\\
	& = 
	\frac{1}{(n+1)p}\left\{
	1 - 
	(1-p)^{n+1}
	\right\}.
	\end{align*}
	
	From the above, Lemma \ref{lemma:bound_X_n_minus_X_recip} holds. 
\end{proof}

\begin{proof}[Proof of Lemma \ref{lemma:bound_mean_Vk_inv}]
	By definition, we have
	\begin{align*}
	\frac{\I(V_k>0)}{V_k} & = N_k^2(N_k-1)
	\frac{
		\I(0< D_k < N_k)
	}{D_k(N_k-D_k)}
	\frac{
		\I(0 < N_{1k} < N_k)
	}{N_{1k} (N_k - N_{1k})}
	\end{align*}
	Lemmas \ref{lemma:joint_dist_D_N1_given_N} and \ref{lemma:bound_X_n_minus_X_recip} then imply 
	\begin{align*}
	& \quad \ \E\left( \frac{\I(V_k>0)}{V_k} \mid N_k \right) \\
	& = N_k^2(N_k-1)
	\E\left\{ \frac{
		\I(0< D_k < N_k)
	}{D_k(N_k-D_k)}
	\mid N_k
	\right\}
	\E\left\{
	\frac{
		\I(0 < N_{1k} < N_k )
	}{N_{1k} (N_k - N_{1k})}
	\mid N_k
	\right\}\\
	& \le 
	\frac{N_k^2(N_k-1) \cdot 4}{(d_k+1)(n_k-d_k+1)} 
	\frac{(n_k+1)(n_k+2)}{(N_k+1)(N_k+2)}
	\cdot
	\frac{4}{\phi_k(1-\phi_k)(N_k+1)(N_k+2)}\\
	& = 
	\frac{16}{\phi_k(1-\phi_k)} 
	\frac{(n_k+1)(n_k+2)}{(d_k+1)(n_k-d_k+1)}
	\cdot
	\frac{N_k^2(N_k-1)}{(N_k+1)^2(N_k+2)^2}\\
	& \le 
	\frac{16}{\phi_k(1-\phi_k)} 
	\frac{(n_k+1)(n_k+2)}{(d_k+1)(n_k-d_k+1)}
	\cdot
	\frac{1}{N_k+1}.
	\end{align*}
	From the law of iterated expectation and Lemmas \ref{lemma:marg_dist_D_N1_N} and  \ref{lemma:bound_X_n_minus_X_recip}, 
	\begin{align*}
	\E\left( \frac{\I(V_k>0)}{V_k} \right) & = 
	\E\left\{
	\E\left( \frac{\I(V_k>0)}{V_k} \mid N_k \right)
	\right\}\\
	& \le
	\frac{16}{\phi_k(1-\phi_k)} 
	\frac{(n_k+1)(n_k+2)}{(d_k+1)(n_k-d_k+1)}
	\cdot
	\E\left( \frac{1}{N_k+1} \right)\\
	& = 
	\frac{16}{\phi_k(1-\phi_k)} 
	\frac{(n_k+1)(n_k+2)}{(d_k+1)(n_k-d_k+1)} \frac{1}{g_k(n_k+1)}
	\left\{1 - (1-g_k)^{n_k+1}\right\}\\
	& \le 
	\frac{16}{\phi_k(1-\phi_k)} 
	\frac{(n_k+2)}{(d_k+1)(n_k-d_k+1)} \frac{1}{g_k}\\
	& =  
	\frac{16}{g_k \phi_k(1-\phi_k)} 
	\frac{(1+2n_k^{-1})}{(h_k+n_k^{-1})(1-h_k+n_k^{-1})} \frac{1}{n_k}. 
	\end{align*}
	Therefore, Lemma \ref{lemma:bound_mean_Vk_inv} holds. 
\end{proof}

\begin{proof}[Proof of Lemma \ref{lemma:prob_limit_Vk}]
	We first consider case (i). 
	From Lemma \ref{lemma:marg_dist_D_N1_N}, the property of binomial distribution, and the Markov inequality, we have 
	\begin{align*}
	\frac{D_{k}}{n} & = \frac{d_{k}}{n} \frac{D_{k}}{d_{k}} =  
	\frac{d_{k}}{n}
	\left\{
	g_{k} + O_\pr \left(
	\frac{1}{d_{k}^{1/2}}
	\right)
	\right\}
	=
	\frac{d_{k}}{n}g_{k} + O_\pr \left(
	\frac{1}{n^{1/2}}
	\right), \\
	\frac{N_{k}}{n} & = 
	\frac{n_k}{n} \frac{N_k}{n_k} = \frac{n_k}{n} 
	\left\{
	g_k + 
	O_\pr \left(
	\frac{1}{n_k^{1/2}}
	\right)
	\right\}
	= \frac{n_k}{n} g_k + O_\pr \left(\frac{1}{n^{1/2}}\right),\\
	\frac{N_{1k}}{n} & = 
	\frac{n_k}{n}\frac{N_{1k}}{n_k} = 
	\frac{n_k}{n}
	\left\{
	g_k \phi_k + O_\pr \left(
	\frac{1}{n_k^{1/2}}
	\right)
	\right\} = 
	\frac{n_k}{n} g_k \phi_k + O_\pr \left(
	\frac{1}{n^{1/2}}
	\right).
	\end{align*}
	These imply that 
	\begin{align*}
	& \quad \ \frac{D_{k}(N_{k}-D_{k}) N_{1k} (N_{k} - N_{1k})}{n^4}
	\\
	& = \frac{D_k}{n} \left(
	\frac{N_k}{n} - \frac{D_k}{n}
	\right)
	\frac{N_{1k}}{n}
	\left(
	\frac{N_k}{n} - \frac{N_{1k}}{n}
	\right)\\
	& = 
	\frac{d_k}{n} g_k \left(
	\frac{n_k}{n} g_k - \frac{d_k}{n} g_k
	\right)
	\frac{n_k}{n} g_k \phi_k
	\left(
	\frac{n_k}{n} g_k - \frac{n_k}{n} g_k \phi_k
	\right) + O_\pr \left(
	\frac{1}{n^{1/2}}
	\right)\\
	& = 
	g_k^4 \phi_k (1-\phi_k) \cdot \frac{d_k}{n} \cdot
	\frac{n_k-d_k}{n} \cdot
	\left( \frac{n_k}{n} \right)^2 + O_\pr \left(
	\frac{1}{n^{1/2}}
	\right),
	\end{align*}
	and 
	\begin{align*}
	\frac{N_{k}^2(N_{k}-1)}{n^3} & = 
	\left( \frac{N_k}{n} \right)^2 
	\left(
	\frac{N_k}{n} - \frac{1}{n}
	\right)
	= 
	g_k^3
	\left( \frac{n_k}{n} \right)^3 + O_\pr \left(
	\frac{1}{n^{1/2}}
	\right). 
	\end{align*}
	Because $n_k /n \ge d_k/n$ and $g_k = p_1 G_1(t_k) + p_0 G_0(t_k)$, we can know that in case (i), 
	$g_k^3(n_k/n)^3$ has a positive limit. Thus, 
	\begin{align}\label{eq:prob_limit_Vk_case1}
	\frac{V_{k}}{n} & = \frac{D_{k}(N_{k}-D_{k}) N_{1k} (N_{k} - N_{1k})/n^4}{ N_{k}^2(N_{k}-1)/n^3}
	\\
	& = 
	\frac{
		g_k^4 \phi_k (1-\phi_k) \cdot d_k/n \cdot
		(n_k-d_k)/n \cdot
		( n_k/n )^2
	}{
		g_k^3
		( n_k/n )^3 
	} + o_\pr (1)
	\nonumber
	\\
	& = 
	g_k \phi_k (1-\phi_k) 
	\frac{n_k}{n}
	\frac{
		d_k
		(n_k-d_k) 
	}{
		n_k^2
	} + o_\pr (1) 
	\nonumber
	\\
	& = 
	g_k \phi_k (1-\phi_k) 
	\frac{n_k}{n}
	h_k(1-h_k) + o_\pr (1).
	\nonumber
	\end{align}
	From \eqref{eq:mean_Vk}, 
	\begin{align*}
	\frac{\E(V_k)}{n}
	& = 
	\frac{n_k}{n}
	\frac{n_k}{n_k-1}  h_k (1-h_k) \phi_k (1-\phi_k) 
	\left\{
	g_k - \frac{1 - (1-g_k)^{n_k}}{n_k} 
	\right\}.
	\end{align*}
	In case (i), $n_k \ge d_k \rightarrow \infty$ as $n\rightarrow \infty$, and thus
	\begin{align}\label{eq:limit_mean_Vk_case1}
	\frac{\E(V_k)}{n}
	& = 
	\frac{n_k}{n}
	h_k (1-h_k) \phi_k (1-\phi_k) 
	g_k + o(1) \\
	& = 
	g_k\phi_k (1-\phi_k)  \frac{n_k}{n}
	h_k (1-h_k) 
	+ o(1).
	\nonumber
	\end{align}
	From \eqref{eq:prob_limit_Vk_case1} and \eqref{eq:limit_mean_Vk_case1}, $V_k/n = \E(V_k)/n + o_\pr(1)$.

	We then consider case (ii). 
	By definition and from \eqref{eq:mean_Vk}, 
	\begin{align}\label{eq:bound_Vk_divided_n_0}
	\frac{\E(V_k)}{n} & = 
	\frac{1}{n}
	\frac{n_k^2}{n_k-1}  h_k (1-h_k) \phi_k (1-\phi_k) 
	\left\{
	g_k - \frac{1 - (1-g_k)^{n_k}}{n_k} 
	\right\}	
	\\
	& \leq \frac{1}{n}\frac{n_k^2}{n_k-1}  h_k (1-h_k) \phi_k (1-\phi_k) g_k
	\nonumber
	\\
	& = \frac{n_k}{n_k-1} (1-h_k) \frac{d_k}{n} 
	\frac{p_1G_1(t_k) p_0G_0(t_k)}{p_1G_1(t_k) + p_0G_0(t_k)}
	\nonumber
	\\
	& \leq 2 (1-h_k) \frac{d_k}{n} \min \left\{
	p_1G_1(t_k), p_0G_0(t_k)
	\right\}. 
	\nonumber
	\end{align}
	In case (ii), if at least one of $d_k/n, 
	p_1G_1(t_k)$ and $p_0G_0(t_k)$ converge to zero as $n\rightarrow\infty$, \eqref{eq:bound_Vk_divided_n_0} implies that
	$
	\E(V_k)/n = o(1);
	$
	otherwise, we must have $\lim_{n\rightarrow \infty} h_k = 1$, and from \eqref{eq:bound_Vk_divided_n_0}, 
	$
	\E(V_k)/n = o(1).
	$
	By the Markov inequality, 
	$$
	V_k/n = O_\pr\left(
	\frac{\E(V_k)}{n}
	\right) = o_\pr(1). 
	$$
	
	From the above, Lemma \ref{lemma:prob_limit_Vk} holds. 
\end{proof}

\begin{proof}[Proof of Lemma \ref{lemma:bound_mean_Vk_inv_all}]
	We first consider case (i) in which $k \in \mathcal{K}_0$. 
	In case (i), 
	the limits of $h_k$ and $\phi_k$ are in $(0,1),$ 
	and the limits of $n_k/n$ and $g_k$ are positive. 
	Thus, from Lemmas \ref{lemma:bound_mean_Vk_inv} and \ref{lemma:prob_limit_Vk}, we have 
	\begin{align}\label{eq:limit_meanV_V_Vinv}
	\lim_{n\rightarrow \infty} \frac{\E(V_k)}{n} > 0, \quad 
	\frac{V_k}{n} = \frac{\E(V_k)}{n} + o_\pr(1), 
	\quad
	\lim_{n\rightarrow \infty} \E\left( \frac{\I(V_k>0)}{V_k} \right) = 0.
	\end{align}
	Because 
	\begin{align*}
	\E\left( \frac{\I(V_k>0)}{V_k^{1/2}} \right) \le 
	\left\{ \E\left( \frac{\I(V_k>0)}{V_k} \right) \right\}^{1/2},
	\end{align*}
	we can know that
	\begin{align*}
	\lim_{n\rightarrow \infty} \E\left( \frac{\I(V_k>0)}{V_k^{1/2}} \right) = 0. 
	\end{align*}
	Let $\omega_k = \lim_{n\rightarrow \infty} \E(V_k)/n > 0$. 
	From \eqref{eq:limit_meanV_V_Vinv},  
	$V_k/n \convergep \omega_k$. 
	By definition, this implies that
	\begin{align*}
	\pr(V_k = 0) & \le 
	\pr\left(
	\frac{V_k}{n} < \frac{\omega_k}{2} 
	\right)
	\le 
	\pr\left(
	\left|
	\frac{V_k}{n} - \omega_k
	\right|
	\ge \frac{\omega_k}{2} 
	\right)
	= o(1).
	\end{align*}

	We then consider case (ii) in which $k \notin \mathcal{K}_0.$  
	By the definition of $\mathcal{K}_0$ and Lemma \ref{lemma:prob_limit_Vk},
	$\E(V_k)/n = o(1)$, and 
	$V_k/n = o_\pr(1).$
	
	From the above, Lemma \ref{lemma:bound_mean_Vk_inv_all} holds. 
\end{proof}

\subsection{Proof of Theorem \ref{thm:CLT_discrete}}

\begin{proof}[Proof of Theorem \ref{thm:CLT_discrete}]
	Under Condition 2, we define a subset of $\{1, 2, \ldots, K\}$ the same as in Lemma \ref{lemma:bound_mean_Vk_inv_all}: 
	\begin{align*}
	\mathcal{K}_0 & = \left\{k_1, \ldots, k_J \right\} \\
	& \equiv 
	\left\{k: \lim_{n\rightarrow \infty}\frac{d_k}{n} > 0,  
	\lim_{n\rightarrow \infty} h_k < 1
	\lim_{n\rightarrow \infty}G_1(t_k) > 0,  \lim_{n\rightarrow \infty} G_0(t_k) > 0 \right\},
	\end{align*}
	where $J\ge 1$ is the cardinality of $\mathcal{K}_0$. 
	For any positive integer $b$, let $\bm{0}_{b\times 1}$ denote a $b$ dimensional column vector with all elements being zero, and
	$\bm{I}_{b\times b}$ denote a $b \times b$ identity matrix.
	
	We first prove that 
	\begin{align}\label{eq:mult_clt}
	\left(
	\frac{D_{1k_1} - \ED_{k_1}}{V_{k_1}^{1/2}}, 
	\frac{D_{1k_2} - \ED_{k_2}}{V_{k_2}^{1/2}}, \ldots, 
	\frac{D_{1k_{J}} - \ED_{k_{J}}}{V_{k_{J}}^{1/2}}
	\right)' \converged \mathcal{N}(\bm{0}_{J\times 1}, \bm{I}_{J\times J}).
	\end{align}
	We prove \eqref{eq:mult_clt} by iteratively proving that for $b=1, , \ldots, J$, 
	\begin{align}\label{eq:asym_normal_upto_j}
	\left(
	\frac{D_{1k_1} - \ED_{k_1}}{V_{k_1}^{1/2}}, \ldots, 
	\frac{D_{1k_{b}} - \ED_{k_{b}}}{V_{k_{b}}^{1/2}}
	\right)' \converged \mathcal{N}(\bm{0}_{b \times 1}, \bm{I}_{b\times b}).
	\end{align}
	Consider first \eqref{eq:asym_normal_upto_j} with $b=1$. From Theorem \ref{thm:mds} and Lemma \ref{lemma:clt_hyper}, for any $x_1\in \mathbb{R}$, 
	\begin{align*}
	\left|
	\pr\left(
	\frac{D_{1k_1} - \ED_{k_1}}{V_{k_1} ^{1/2}}
	\le x_1 \mid \mathcal{F}_0
	\right) - \Phi(x_1)
	\right| \le \gamma 
	\frac{\I(V_{k_1}>0)}{V_{k_1}^{1/2}}
	+ \I(V_{k_1} = 0). 
	\end{align*}
	By the law of iterated expectation, 
	\begin{align}\label{eq:clt_first_one}
	& \quad \ \left|
	\pr\left(
	\frac{D_{1k_1} - \ED_{k_1}}{V_{k_1} ^{1/2}} 
	\le x_1
	\right) - \Phi(x_1)
	\right|
	\\
	& = 
	\left|
	\E\left\{\pr\left(
	\frac{D_{1{k_1}} - \ED_{k_1}}{V_{k_1} ^{1/2}} 
	\le x_1 \mid \mathcal{F}_0
	\right)\right\} - \Phi(x_1)
	\right|
	\nonumber
	\\
	& 
	\le 
	\E\left\{
	\left|
	\pr\left(
	\frac{D_{1k_1} - \ED_{k_1}}{V_{k_1} ^{1/2}} 
	\le x_1 \mid \mathcal{F}_0
	\right) - \Phi(x_1)
	\right|\right\}
	\nonumber
	\\
	& \le 
	\gamma 
	\E\left( 
	\frac{\I(V_{k_1}>0)}{V_{k_1}^{1/2} }
	\right) + \pr(V_{k_1} = 0).
	\nonumber
	\end{align}
	From Lemma \ref{lemma:bound_mean_Vk_inv_all} and  \eqref{eq:clt_first_one}, for any $x\in \mathbb{R}$, 
	\begin{align*}
	\pr\left(
	\frac{D_{1k_1} - \ED_{k_1}}{V_{k_1} ^{1/2}} 
	\le x_1 
	\right) - \Phi(x_1) = o(1),
	\end{align*}
	i.e., \eqref{eq:asym_normal_upto_j} holds for $b=1.$ 
	Assume that \eqref{eq:asym_normal_upto_j} holds for $b = j < J$, 
	we prove that \eqref{eq:asym_normal_upto_j} also holds for $b = j+1$. 
	Let $\mathcal{G}_j$ denote the event that $V_{k_{q}}^{-1/2} (D_{1k_{q}} - \ED_{k_{q}}) \le x_q$ for $q=1,\ldots, j$. 
	By the law of iterated expectation, 
	\begin{align*}
	& \quad \  \pr\left(
	\frac{D_{1k_{j+1}} - \ED_{k_{j+1}}}{V_{k_{j+1}}^{1/2}} \le x_{j+1} \mid 
	\mathcal{G}_j
	\right) \\
	& = 
	\E\left\{
	\pr\left(
	\frac{D_{1k_{j+1}} - \ED_{k_{j+1}}}{V_{k_{j+1}}^{1/2}}
	\le x_{j+1} \mid 
	\mathcal{F}_{k_{j+1}-1}, \mathcal{G}_j
	\right) \mid 
	\mathcal{G}_j
	\right\}\\
	& = \E\left\{
	\pr\left(
	\frac{D_{1k_{j+1}} - \ED_{k_{j+1}}}{V_{k_{j+1}}^{1/2}}
	\le x_{j+1} \mid 
	\mathcal{F}_{k_{j+1}-1}
	\right) \mid 
	\mathcal{G}_j
	\right\},
	\end{align*}
	where the last equality holds because $\mathcal{F}_{k_{j+1}-1}$ contains more information than $\mathcal{G}_j$ by definition. 
	Theorem \ref{thm:mds} and Lemma \ref{lemma:clt_hyper} imply that for any $x_{j+1} \in \mathbb{R}$, 
	\begin{align*}
	& \quad \ 
	\left| 
	\pr\left(
	\frac{D_{1k_{j+1}} - \ED_{k_{j+1}}}{V_{k_{j+1}}^{1/2}} \le x_{j+1} \mid 
	\mathcal{F}_{k_{j+1}-1}
	\right) - \Phi(x_{j+1})
	\right| \\
	& \le 
	\gamma 
	\frac{\I(V_{k_{j+1}}>0)}{V_{k_{j+1}}^{1/2}}
	+ \I(V_{k_{j+1}} = 0). 
	\end{align*}
	By the law of iterated expectation, 
	\begin{align}\label{eq:bound_cond_dist}
	& \quad \ 
	\left|
	\pr\left(
	\frac{D_{1k_{j+1}} - \ED_{k_{j+1}}}{V_{k_{j+1}}^{1/2}} \le x_{j+1} \mid 
	\mathcal{G}_j
	\right)
	- \Phi(x_{j+1})
	\right|
	\\
	& = 
	\left| \E\left\{
	\pr\left(
	\frac{D_{1k_{j+1}} - \ED_{k_{j+1}}}{V_{k_{j+1}}^{1/2}} \le x_{j+1} \mid 
	\mathcal{F}_{k_{j+1}-1}
	\right) \mid 
	\mathcal{G}_j
	\right\} - \Phi(x_{j+1})
	\right|
	\nonumber
	\\
	& \le 
	\E\left\{
	\left| 
	\pr\left(
	\frac{D_{1k_{j+1}} - \ED_{k_{j+1}}}{V_{k_{j+1}}^{1/2}} \le x_{j+1} \mid 
	\mathcal{F}_{k_{j+1}-1}
	\right) - \Phi(x_{j+1})
	\right|
	\mid 
	\mathcal{G}_j
	\right\} 
	\nonumber
	\\
	& \le 
	\E\left(
	\gamma 
	\frac{\I(V_{k_{j+1}}>0)}{V_{k_{j+1}}^{1/2}}
	+ \I(V_{k_{j+1}} = 0)
	\mid 
	\mathcal{G}_j
	\right).
	\nonumber
	\end{align}
	By the law of total expectation, we can further bound \eqref{eq:bound_cond_dist} by 
	\begin{align}\label{eq:bound_cond_dist_2}
	& \quad \ 
	\left|
	\pr\left(
	\frac{D_{1k_{j+1}} - \ED_{k_{j+1}}}{V_{k_{j+1}}^{1/2}} \le x_{j+1} \mid 
	\mathcal{G}_j
	\right)
	- \Phi(x_{j+1})
	\right|
	\nonumber
	\\
	& \le 
	\E\left(
	\gamma 
	\frac{\I(V_{k_{j+1}}>0)}{V_{k_{j+1}}^{1/2}}
	+ \I(V_{k_{j+1}} = 0)
	\mid 
	\mathcal{G}_j
	\right)
	\nonumber
	\\
	& \leq 
	\frac{1}{\pr(\mathcal{G}_j)}
	\left\{
	\gamma 
	\E\left(
	\frac{\I(V_{k_{j+1}}>0)}{V_{k_{j+1}}^{1/2}}
	\right) + 
	\pr\left(
	V_{k_{j+1}} = 0
	\right) 
	\right\}.
	\end{align}
	Because 
	\begin{align*}
	& \quad \ \pr\left(
	\mathcal{G}_j, 
	\frac{D_{1k_{j+1}} - \ED_{k_{j+1}}}{V_{k_{j+1}}^{1/2}} \le x_{j+1}
	\right) - \prod_{q=1}^{j+1} \Phi(x_q)\\
	& = 
	\pr\left( 
	\mathcal{G}_j
	\right)
	\left\{
	\pr\left(
	\frac{D_{1k_{j+1}} - \ED_{k_{j+1}}}{V_{k_{j+1}}^{1/2}}
	\le x_{j+1} \mid \mathcal{G}_j
	\right) - \Phi(x_{j+1})
	\right\} \\
	& \quad \ + 
	\left\{\pr\left( 
	\mathcal{G}_j
	\right) -  \prod_{q=1}^{j} \Phi(x_q) 
	\right\}
	\Phi(x_{j+1}),
	\end{align*}
	from \eqref{eq:bound_cond_dist_2}, 
	we have
	\begin{align}\label{eq:bound_joint_dist}
	& \quad \ 
	\left| \pr\left(
	\mathcal{G}_j, 
	\frac{D_{1k_{j+1}} - \ED_{k_{j+1}}}{V_{k_{j+1}}^{1/2}} \le x_{j+1}
	\right) - \prod_{q=1}^{j+1} \Phi(x_q)
	\right|
	\\
	& \le 
	\pr\left( 
	\mathcal{G}_j
	\right)
	\left|
	\pr\left(
	\frac{D_{1k_{j+1}} - \ED_{k_{j+1}}}{V_{k_{j+1}}^{1/2}}
	\le x_{j+1} \mid \mathcal{G}_j
	\right) - \Phi(x_{j+1})
	\right| 
	\nonumber
	\\
	& \quad \ 
	+ 
	\Phi(x_{j+1})
	\left|\pr\left( 
	\mathcal{G}_j
	\right) -  \prod_{q=1}^{j} \Phi(x_q) 
	\right|
	\nonumber
	\\
	& \le 
	\gamma 
	\E\left(
	\frac{\I(V_{k_{j+1}}>0)}{V_{k_{j+1}}^{1/2}}
	\right) + 
	\pr\left(
	V_{k_{j+1}} = 0
	\right)  + \left|\pr\left( 
	\mathcal{G}_j
	\right) -  \prod_{q=1}^{j} \Phi(x_q) 
	\right|. 
	\nonumber
	\end{align}
	From Lemma \ref{lemma:bound_mean_Vk_inv_all}, the first two terms in \eqref{eq:bound_joint_dist} are both $o(1)$.
	Because \eqref{eq:asym_normal_upto_j} holds for $b = j$, the third term in \eqref{eq:bound_joint_dist} is also $o(1)$. 
	Therefore, 
	\begin{align*}
	& \quad \ 
	\pr\left(
	\frac{D_{1k_1} - \ED_{k_1}}{V_{k_1}^{1/2}}
	\le x_1, \ldots, 
	\frac{D_{1k_{j+1}} - \ED_{k_{j+1}}}{V_{k_{j+1}}^{1/2}}
	\le x_j
	\right) - \prod_{q=1}^{j+1} \Phi(x_q)\\
	& =
	\pr\left(
	\mathcal{G}_j, 
	\frac{D_{1k_{j+1}} - E_{1k_{j+1}}}{V_{k_{j+1}}^{1/2}} \le x_{j+1}
	\right) - \prod_{q=1}^{j+1} \Phi(x_q) \\
	& = o(1),
	\end{align*}
	i.e., 
	\eqref{eq:asym_normal_upto_j} holds for $b = j+1$. 
	From the above, 
	we can know that 
	\eqref{eq:asym_normal_upto_j} holds for $b=J$, i.e., \eqref{eq:mult_clt} holds. 
	
	Second, we prove the asymptotic Gaussianity of the logrank statistic. 
	By definition, 
	\begin{align}\label{eq:logrank_two_parts}
	\frac{L}{U^{1/2}} & = 
	\frac{\sum_{k=1}^K (D_{1k} - \ED_{k})}{U^{1/2}} = 
	\frac{\sum_{j=1}^{J} (D_{1k_j} - \ED_{k_j})}{U^{1/2}} + 
	\frac{\sum_{k\notin \mathcal{K}_0 } (D_{1k} - \ED_{k})}{U^{1/2}} 
	\nonumber
	\\
	& = 
	\sum_{j=1}^{J} \left( \frac{V_{k_j}}{U} \right)^{1/2}  \cdot
	\frac{D_{1k_j} - \ED_{k_j}}{V_{k_j}^{1/2}}
	+ 
	\sum_{k\notin \mathcal{K}_0}
	\frac{n^{-1/2}(D_{1k} - \ED_{k})}{n^{-1/2} U^{1/2}}.
	\end{align}
	We consider the two terms in \eqref{eq:logrank_two_parts} separately. 
	For the first term in \eqref{eq:logrank_two_parts}, from Lemma \ref{lemma:bound_mean_Vk_inv_all}, 
	\begin{align}\label{eq:limit_U_divided_by_n}
	\frac{U}{n} = 
	\sum_{k=1}^{K}\frac{V_k}{n}
	= \sum_{k=1}^{K}\frac{\E(V_k)}{n} + o_\pr(1)
	= \sum_{j=1}^{J} \frac{\E(V_{k_j})}{n} + o_\pr(1), 
	\end{align}
	and thus
	\begin{align*}
	\frac{V_{k_j}}{U} & =\left(
	\frac{U}{n}
	\right)^{-1} \cdot \frac{V_{k_j}}{n} 
	= 
	\left\{
	\sum_{j=1}^{J} \frac{\E(V_{k_j})}{n} + o_\pr(1)
	\right\}^{-1} \cdot 
	\left\{\frac{\E(V_{k_j})}{n} + o_\pr(1)
	\right\}\\
	& = 
	\left\{
	\sum_{j=1}^{J} \frac{\E(V_{k_j})}{n} 
	\right\}^{-1} \cdot 
	\frac{\E(V_{k_j})}{n} + o_\pr(1).
	\end{align*}
	From \eqref{eq:mult_clt}, by Slutsky's theorem, we have  
	\begin{align}\label{eq:logrank_fist_part}
	\sum_{j=1}^{J} \left( \frac{V_{k_j}}{U} \right)^{1/2}  \cdot
	\frac{D_{1k_j} - \ED_{k_j}}{V_{k_j}^{1/2}} \converged \mathcal{N}(0,1). 
	\end{align}
	For the second term in \eqref{eq:logrank_two_parts}, 
	from Lemma \ref{lemma:bound_mean_Vk_inv_all}, for any $k \notin \mathcal{K}_0$, 
	\begin{align*}
	\E\left\{\left( \frac{D_{1k} - \ED_{k}}{n^{1/2}} \right)^2\right\} & = 
	\frac{1}{n}
	\E\left[
	\E\left\{\left( D_{1k} - \ED_{k} \right)^2 \mid \mathcal{F}_{k-1}\right\}
	\right] = \frac{\E(V_k)}{n} = o(1).
	\end{align*}
	Thus, by the Markov inequality, $n^{-1/2}(D_{1k} - \ED_{k}) = o_\pr(1)$. 
	\eqref{eq:limit_U_divided_by_n} then implies 
	\begin{align}\label{eq:logrank_second_part}
	\sum_{k\notin \mathcal{K}_0}
	\frac{n^{-1/2}(D_{1k} - \ED_{k})}{n^{-1/2} U^{1/2}} = o_\pr(1). 
	\end{align}
	From \eqref{eq:logrank_two_parts}, \eqref{eq:logrank_fist_part}, \eqref{eq:logrank_second_part} and Slutsky's theorem, we have
	\begin{align*}
	\frac{L}{U^{1/2}} & = 
	\sum_{j=1}^{J} \left( \frac{V_{k_j}}{U} \right)^{1/2}  \cdot
	\frac{D_{1k_j} - \ED_{k_j}}{V_{k_j}^{1/2}}
	+ 
	\sum_{k\notin \mathcal{K}_0}
	\frac{n^{-1/2}(D_{1k} - \ED_{k})}{n^{-1/2} U^{1/2}} \converged \mathcal{N}(0,1).
	\end{align*}
	Moreover, \eqref{eq:limit_U_divided_by_n} implies that $U/\E(U) = 1 + o_\pr(1)$, i.e., $U$ is a consistent estimator for $\E(U) = \Var(L)$. 
	
	From the above, Theorem \ref{thm:CLT_discrete} holds. 
\end{proof}

\section{Asymptotic distribution of the stratified logrank statistic}\label{app:logrank_strata}

\subsection{Lemmas}

To prove Theorem \ref{thm:CLT_SLR}, we need the following two lemmas. 

\begin{lemma}\label{lemma:sum_indep_normal_varying_coef}
	Let $\{(X_{n1}, \ldots, X_{nS}): n = 1, 2, \ldots, \infty\}$ be a sequence of random vectors satisfying that (a) $(X_{n1}, X_{n2}, \ldots, X_{nS})$ are mutually independent, and (b) for $1\le s\le S$, $X_{ns} \converged \mathcal{N}(0,1)$ as $n\rightarrow \infty$. 
	Let $\{(a_{n1}, \ldots, a_{nS}): n = 1, 2, \ldots, \infty\}$ be a sequence of constant vectors satisfying that $\sum_{s=1}^S a_{ns}^2 = 1$ for all $n\ge 1$. 
	Then $\sum_{s=1}^S a_{ns} X_{ns} \converged \mathcal{N}(0,1)$ as $n\rightarrow \infty.$ 
\end{lemma}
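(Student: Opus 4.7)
The plan is to prove this via characteristic functions and Lévy's continuity theorem. Let $\phi_{ns}(\cdot)$ denote the characteristic function of $X_{ns}$. By independence, the characteristic function of the weighted sum evaluated at $t \in \mathbb{R}$ factors as
\[
\E\bigl[\exp(\mathrm{i} t \textstyle\sum_{s=1}^S a_{ns} X_{ns})\bigr] = \prod_{s=1}^{S} \phi_{ns}(a_{ns} t).
\]
The goal is to show that this converges to $e^{-t^2/2}$ for every fixed $t$, so that Lévy's continuity theorem yields the desired weak convergence.

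The main subtlety, which I anticipate as the key obstacle, is that the coefficients $a_{ns}$ can vary arbitrarily with $n$ (they are not assumed to have limits), so one cannot simply invoke pointwise convergence of $\phi_{ns}$ at a fixed argument. I would resolve this using the standard fact that if a sequence of characteristic functions converges pointwise to a continuous limit, then the convergence is automatically uniform on compact intervals (this follows from tightness plus pointwise convergence, and is a classical companion result to Lévy's theorem). Applied here, assumption (b) gives $\phi_{ns}(u) \to e^{-u^2/2}$ uniformly in $u$ on any fixed interval $[-T, T]$ as $n \to \infty$, for each $s$.

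Now, since $\sum_{s=1}^S a_{ns}^2 = 1$, each $|a_{ns}| \le 1$, so $a_{ns} t \in [-|t|, |t|]$ for all $n$ and $s$. Combined with the uniform convergence above, this gives
\[
\phi_{ns}(a_{ns} t) - e^{-(a_{ns} t)^2/2} = o(1) \quad \text{as } n \to \infty,
\]
uniformly over the admissible values of $a_{ns}$, for each fixed $s$ and $t$. Because $S$ is a fixed finite integer and each factor is bounded in modulus by $1$ while the limiting factors are bounded below by $e^{-t^2/2}$, the telescoping product argument yields
\[
\prod_{s=1}^S \phi_{ns}(a_{ns} t) = \prod_{s=1}^S e^{-(a_{ns} t)^2/2} + o(1) = \exp\!\bigl(-\tfrac{t^2}{2} \textstyle\sum_{s=1}^S a_{ns}^2\bigr) + o(1) = e^{-t^2/2} + o(1),
\]
where the constraint $\sum_s a_{ns}^2 = 1$ is used in the last step. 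Applying Lévy's continuity theorem completes the proof. The argument is short but relies essentially on the uniform-convergence upgrade of pointwise characteristic-function convergence, which is what makes the conclusion robust to arbitrarily varying weights.
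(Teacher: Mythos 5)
Your proof is correct, but it takes a genuinely different route from the paper's. You work on the Fourier side: by independence the characteristic function of $\sum_s a_{ns}X_{ns}$ factors as $\prod_s \phi_{ns}(a_{ns}t)$, and the key step is upgrading the pointwise convergence $\phi_{ns}(u)\to e^{-u^2/2}$ (equivalent to assumption (b)) to convergence that is uniform on compact intervals --- a classical consequence of tightness plus equicontinuity of characteristic functions. Since $\sum_s a_{ns}^2=1$ forces $|a_{ns}|\le 1$, the arguments $a_{ns}t$ stay in the fixed compact $[-|t|,|t|]$, so each factor satisfies $\phi_{ns}(a_{ns}t)=e^{-(a_{ns}t)^2/2}+o(1)$; the telescoping bound $|\prod_s A_s-\prod_s B_s|\le\sum_s|A_s-B_s|$ for factors of modulus at most one (your extra remark about the limiting factors being bounded below is not actually needed) then gives $\prod_s\phi_{ns}(a_{ns}t)\to e^{-t^2\sum_s a_{ns}^2/2}=e^{-t^2/2}$, and L\'evy's continuity theorem finishes. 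The paper instead uses a quantile coupling: it represents $X_{ns}$ as $F_{ns}^{-1}(U_s)$ for common uniforms $U_s$, invokes the fact that $F_{ns}^{-1}(U_s)\to\Phi^{-1}(U_s)$ almost surely, bounds $|\sum_s a_{ns}F_{ns}^{-1}(U_s)-\sum_s a_{ns}\Phi^{-1}(U_s)|\le\sum_s|F_{ns}^{-1}(U_s)-\Phi^{-1}(U_s)|$ using $|a_{ns}|\le 1$, and observes that $\sum_s a_{ns}\Phi^{-1}(U_s)$ is exactly $\mathcal N(0,1)$ for every $n$, so Slutsky applies. Both arguments neutralize the lack of limits for the weights $a_{ns}$ in the same spirit --- by comparing to an exactly Gaussian object whose distribution is invariant under the normalization $\sum_s a_{ns}^2=1$ --- but yours leans on the uniform-convergence lemma for characteristic functions while the paper's leans on the almost-sure quantile coupling; each is a legitimate and roughly equally short path.
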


\begin{lemma}\label{lemma:sum_converge_in_prob}
	Let $\{(Y_{n1}, \ldots, Y_{nS}): n = 1, 2, \ldots, \infty\}$ be a sequence of random vectors, and $\{(b_{n1}, \ldots, b_{nS}): n = 1, 2, \ldots, \infty\}$ be a sequence of constant vectors satisfying that $b_{ns}>0$ for all $n\ge 1$ and $1\le s\le S$. 
	As $n\rightarrow \infty,$
	if  $b_{ns}^{-1}Y_{ns} \convergep 1$ for $1\le s\le S$, then 
	\begin{align}\label{eq:sum_converge_in_prob}
	\frac{\sum_{s=1}^S Y_{ns}}{\sum_{s=1}^S b_{ns}}
	\convergep 1.
	\end{align}
\end{lemma}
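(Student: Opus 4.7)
The plan is to reduce the claim to a simple convex-combination bound followed by a union bound, exploiting the fact that $S$ is fixed (finite) and the weights $b_{ns}/\sum_r b_{nr}$ are nonnegative and sum to one.

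First, I would introduce the normalized weights
\begin{align*}
w_{ns} = \frac{b_{ns}}{\sum_{r=1}^{S} b_{nr}}, \qquad 1 \le s \le S,
\end{align*}
which are well defined and nonnegative because $b_{ns} > 0$, and satisfy $\sum_{s=1}^S w_{ns} = 1$. Then the target ratio admits the decomposition
\begin{align*}
\frac{\sum_{s=1}^{S} Y_{ns}}{\sum_{s=1}^{S} b_{ns}}
= \sum_{s=1}^{S} w_{ns} \cdot \frac{Y_{ns}}{b_{ns}},
\end{align*}
so that
\begin{align*}
\left| \frac{\sum_{s=1}^{S} Y_{ns}}{\sum_{s=1}^{S} b_{ns}} - 1 \right|
= \left| \sum_{s=1}^{S} w_{ns}\left( \frac{Y_{ns}}{b_{ns}} - 1 \right) \right|
\le \sum_{s=1}^{S} w_{ns} \left| \frac{Y_{ns}}{b_{ns}} - 1 \right|
\le \max_{1\le s\le S} \left| \frac{Y_{ns}}{b_{ns}} - 1 \right|,
\end{align*}
where the last inequality uses $w_{ns} \ge 0$ and $\sum_s w_{ns} = 1$. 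Crucially, this bound is uniform in $n$ and free of the (possibly wildly varying) weights $w_{ns}$.

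Next, for any $\varepsilon > 0$, the union bound together with the hypothesis $b_{ns}^{-1} Y_{ns} \convergep 1$ for each fixed $s$ yields
\begin{align*}
\pr\left( \max_{1\le s \le S} \left| \frac{Y_{ns}}{b_{ns}} - 1 \right| > \varepsilon \right)
\le \sum_{s=1}^{S} \pr\left( \left| \frac{Y_{ns}}{b_{ns}} - 1 \right| > \varepsilon \right)
\converge 0
\end{align*}
as $n\to\infty$, since $S$ is a fixed finite number and each summand vanishes. Combining this with the deterministic bound from the previous display gives \eqref{eq:sum_converge_in_prob}.

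The argument has no substantive obstacle: the result is essentially a statement that a finite weighted average of terms converging to $1$ in probability again converges to $1$, and the only subtlety worth flagging is that finiteness of $S$ is essential (since the union bound would otherwise fail and the weights $w_{ns}$ could concentrate on coordinates whose $Y_{ns}/b_{ns}$ have not yet stabilized). Since $S$ is held fixed throughout the stratified logrank setting of Theorem \ref{thm:CLT_SLR}, this assumption is automatic.
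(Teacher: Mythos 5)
Your proof is correct and follows essentially the same route as the paper: both write the ratio as a convex combination $\sum_s w_{ns}(Y_{ns}/b_{ns})$ with $w_{ns}=b_{ns}/\sum_r b_{nr}$ and then use that the weights lie in $[0,1]$ and sum to one together with finiteness of $S$ (you bound by the maximum and apply a union bound; the paper bounds by the unweighted sum $\sum_s |Y_{ns}/b_{ns}-1|=o_\pr(1)$, which is the same idea). No gaps.
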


\subsection{Proofs of the lemmas}

\begin{proof}[Proof of Lemma \ref{lemma:sum_indep_normal_varying_coef}]
	For $1\le s\le S$ and $n \ge 1$, let $F_{ns}(x) \equiv \pr(X_{ns}\le x)$ be the distribution function of $X_{ns}$, 
	and $F_{ns}^{-1}(p) = \inf \{x: F_{ns}(x) \ge p\}$ be the corresponding quantile function. 
	Let $(U_1, \ldots, U_S)$ be i.i.d.\ random variables uniformly distributed on $[0,1]$, 
	$\Phi(\cdot)$ be the distribution function of $\mathcal{N}(0,1)$, 
	and $\Phi^{-1}(\cdot)$ be the quantile function of $\mathcal{N}(0,1)$. 
	From \citet[][Lemma 21.2]{van2000asymptotic}, 
	$F_{ns}^{-1}(U_s) \convergeas \Phi^{-1}(U_s)$, for $1\le s\le S$. 
	This implies 
	\begin{align*}
	& \quad \ \left| \sum_{s=1}^S a_{ns} F_{ns}^{-1}(U_s) - \sum_{s=1}^S a_{ns} \Phi^{-1}(U_s)
	\right|\\
	& = \sum_{s=1}^S \left| a_{ns} \right|
	\left| F_{ns}^{-1}(U_s) - \Phi^{-1}(U_s) \right|
	\le 
	\left(\sum_{s=1}^S a_{ns}^2 \right)^{1/2} \cdot
	\sum_{s=1}^S 
	\left| F_{ns}^{-1}(U_s) - \Phi^{-1}(U_s) \right|\\
	& = \sum_{s=1}^S 
	\left| F_{ns}^{-1}(U_s) - \Phi^{-1}(U_s) \right| \convergeas 0.
	\end{align*}
	Because $\sum_{s=1}^S a_{ns} \Phi^{-1}(U_s)\sim \mathcal{N}(0,1)$ for all $n\ge 1$, 
	by Slutsky's theorem, 
	\begin{align*}
	\sum_{s=1}^S a_{ns} F_{ns}^{-1}(U_s) & = 
	\left( \sum_{s=1}^S a_{ns} F_{ns}^{-1}(U_s) - \sum_{s=1}^S a_{ns} \Phi^{-1}(U_s) \right) + 
	\sum_{s=1}^S a_{ns} \Phi^{-1}(U_s)
	\\
	& \converged \mathcal{N}(0,1).
	\end{align*}
	From the property of quantile functions and the mutually independence of $U_s$'s, 
	$(X_{n1}, \ldots, X_{nS}) \sim (F_{n1}^{-1}(U_1), \ldots, F_{nS}^{-1}(U_S))$, and thus
	$
	\sum_{s=1}^S a_{ns} X_{ns} \converged \mathcal{N}(0,1).
	$
	Therefore, Lemma \ref{lemma:sum_indep_normal_varying_coef} holds. 
\end{proof}

\begin{proof}[Proof of Lemma \ref{lemma:sum_converge_in_prob}]
	The difference between the two sides of \eqref{eq:sum_converge_in_prob} 
	has the following equivalent forms:
	\begin{align*}
	\frac{\sum_{s=1}^S Y_{ns}}{\sum_{j=1}^S b_{nj}} - 1
	& = \frac{\sum_{s=1}^S (Y_{ns}-b_{ns})}{\sum_{j=1}^S b_{nj}} 
	= \sum_{s=1}^S 
	\frac{b_{ns}}{\sum_{j=1}^S b_{nj}}
	\frac{ (Y_{ns}-b_{ns})}{b_{ns}}
	\\
	& = \sum_{s=1}^S 
	\frac{b_{ns}}{\sum_{j=1}^S b_{nj}}
	\left( \frac{Y_{ns}}{b_{ns}} - 1 \right)
	\end{align*}
	The positivity of the $b_{ns}$'s then implies 
	\begin{align*}
	\left| \frac{\sum_{s=1}^S Y_{ns}}{\sum_{j=1}^S b_{nj}} - 1\right|
	& 
	\le \sum_{s=1}^S 
	\frac{b_{ns}}{\sum_{j=1}^S b_{nj}}
	\left| \frac{Y_{ns}}{b_{ns}} - 1 \right|
	\le
	\sum_{s=1}^S 
	\left| \frac{Y_{ns}}{b_{ns}} - 1 \right| = o_\pr(1).
	\end{align*}
	Therefore, Lemma \ref{lemma:sum_converge_in_prob} holds.
\end{proof}

\subsection{Proof of Theorem \ref{thm:CLT_SLR}}

\begin{proof}[Proof of Theorem \ref{thm:CLT_SLR}]
	Because the treatment assignments $Z_i$'s and the potential censoring times $(C_i(1), C_i(0))$'s are mutually independent for $1\le i \le n$, 
	$(U_{[1]}^{-1/2} L_{[1]}, \ldots, U_{[S]}^{-1/2} L_{[S]})$ are mutually independent. 
	From Theorems \ref{thm:CLT_conti} and \ref{thm:CLT_discrete} and Slutsky's theorem, 
	as $n\rightarrow \infty,$ for $1\le s\le S$, 
	$$
	\frac{L_{[s]}}{\sqrt{E(U_{[s]}) }}
	\converged \mathcal{N}(0,1). 
	$$
	Using Lemma \ref{lemma:sum_indep_normal_varying_coef}, we then have
	\begin{align}\label{eq:strata_CLT_comp1}
	\sum_{s=1}^S 
	\left\{
	\frac{\E(U_{[s]})}{\sum_{j=1}^S \E(U_{[j]})}
	\right\}^{1/2}
	\frac{L_{[s]}}{\sqrt{E(U_{[s]}) }} 
	= 
	\frac{\sum_{s=1}^S  L_{[s]}}{
		\sqrt{ \sum_{s=1}^S \E(U_{[s]}) }
	} 
	\converged \mathcal{N}(0,1).
	\end{align}
	From Theorems \ref{thm:CLT_conti} and \ref{thm:CLT_discrete} and Lemma \ref{lemma:sum_converge_in_prob}, we have
	\begin{align}\label{eq:strata_CLT_comp2}
	\frac{\sum_{s=1}^S U_{[s]}}{\sum_{s=1}^S \E(U_{[s]})}
	\convergep 1.
	\end{align}
	\eqref{eq:strata_CLT_comp1}, \eqref{eq:strata_CLT_comp2}, and Slutsky's theorem imply that 
	\begin{align*}
	\SLR = \frac{\sum_{s=1}^{S} L_{[s]}}{
		\sqrt{\sum_{s=1}^{S} U_{[s]}} 
	}
	& = 
	\left\{ \frac{\sum_{s=1}^S U_{[s]}}{\sum_{s=1}^S \E(U_{[s]})} \right\}^{-1/2}
	\frac{\sum_{s=1}^{S} L_{[s]}}{ \sqrt{ \sum_{s=1}^S \E(U_{[s]}) } }
	\converged \mathcal{N}(0,1).
	\end{align*}
	Therefore, Theorem \ref{thm:CLT_SLR} holds. 
\end{proof}

\end{document}